\tikzset{vert/.style={draw, fill=black, circle, inner sep=2pt}}
\newtheorem{theorem}{Theorem}[section]
\newtheorem{lemma}[theorem]{Lemma}
\newtheorem{proposition}[theorem]{Proposition}
\newtheorem{conjecture}[theorem]{Conjecture}
\newtheorem{claim}[theorem]{Claim}
\theoremstyle{definition}
\newtheorem{definition}[theorem]{Definition}
\newtheorem*{remark}{Remark}
\crefname{equation}{equation}{equations}
\crefname{lemma}{Lemma}{Lemmas}
\crefname{proposition}{Proposition}{Propositions}
\crefname{claim}{Claim}{Claims}
\crefname{theorem}{Theorem}{Theorems}
\crefname{conjecture}{Conjecture}{Conjectures}
\crefname{figure}{Figure}{Figures}
\newlist{lemenum}{enumerate}{1}
\setlist[lemenum]{label=(\alph*), ref=\thelemma(\alph*)}
\DeclareMathOperator\pr{Pr}
\newcommand\ab[1]{\lvert#1\rvert}
\newcommand{\flo}[1]{\lfloor #1 \rfloor}
\newcommand{\cei}[1]{\lceil #1 \rceil}
\newcommand{\cop}[1]{\widehat{#1}}
\let\oldH\H
\let\oldL\L
\let\leq\leqslant
\let\geq\geqslant
\newcommand{\Z}{\mathbb{Z}}
\newcommand\I{\mathcal{I}}
\newcommand\F{\mathcal{F}}
\newcommand\G{\mathcal{G}}
\renewcommand\H{\mathcal{H}}
\renewcommand\L{\mathcal{L}}
\renewcommand\S{\mathcal{S}}
\newcommand\Gbad{\mathcal{G}_{\mathrm{bad}}}
\newcommand{\br}[1]{\llbracket{#1}\rrbracket}
\title{On the Kohayakawa--Kreuter conjecture}
\author{Eden Kuperwasser}
\address{School of Mathematical Sciences, Tel Aviv University, Tel Aviv 6997801, Israel}
\email{{\char '173}kuperwasser,samotij,yuvalwig{\char '175}@tauex.tau.ac.il}
\author{Wojciech Samotij}
\author{Yuval Wigderson}
\thanks{EK, WS, and YW are supported by ERC Consolidator Grant 101044123 (RandomHypGra), by Israel Science Foundation Grant 2110/22, and by NSF--BSF Grant 2019679. YW is additionally supported by ERC Consolidator Grant 863438 (LocalGlobal).}
\date{}
\begin{document}

\begin{abstract}
	Let us say that a graph $G$ is Ramsey for a tuple $(H_1,\dots,H_r)$ of graphs if every $r$-coloring of the edges of $G$ contains a monochromatic copy of $H_i$ in color $i$, for some $i \in \br r$.
	A famous conjecture of Kohayakawa and Kreuter, extending seminal work of R\"odl and Ruci\'nski, predicts the threshold at which the binomial random graph $G_{n,p}$ becomes Ramsey for $(H_1,\dots,H_r)$ asymptotically almost surely. In this paper, we resolve the Kohayakawa--Kreuter conjecture for almost all tuples of graphs. Moreover, we reduce its validity to the truth of a certain deterministic statement, which is a clear necessary condition for the conjecture to hold. All of our results actually hold in greater generality, when one replaces the graphs $H_1,\dots,H_r$ by finite families $\H_1,\dots,\H_r$. 
	Additionally, we pose a natural (deterministic) graph-partitioning conjecture, which we believe to be of independent interest, and whose resolution would imply the Kohayakawa--Kreuter conjecture.
\end{abstract}
\maketitle

\section{Introduction}
\subsection{Symmetric Ramsey properties of random graphs}

Given graphs $G$ and $H_1, \dotsc, H_r$, one says that $G$ is \emph{Ramsey for the tuple $(H_1,\dotsc,H_r)$} if, for every $r$-coloring of the edges of $G$, there is a monochromatic copy of $H_i$ in some color $i \in \br{r}$.  In the symmetric case $H_1=\dotsb=H_r=H$, we simply say that $G$ is \emph{Ramsey for $H$ in $r$ colors}.  Ramsey's theorem~\cite{MR1576401} implies that the complete graph $K_n$ is Ramsey for $(H_1,\dotsc,H_r)$ whenever $n$ is sufficiently large.  The fundamental question of graph Ramsey theory is to determine, for a given tuple $(H_1, \dotsc, H_r)$, which graphs $G$ are Ramsey for it.  For more on this question, as well as the many fascinating sub-questions it contains, we refer the reader to the survey~\cite{MR3497267}.

In this paper, we are interested in Ramsey properties of random graphs, a topic that was initiated in the late 1980s by Frankl--R\"odl~\cite{MR0932121} and \oldL{}uczak--Ruci\'nski--Voigt~\cite{MR1182457}. The main question in this area is, for a given tuple $(H_1,\dotsc,H_r)$, which functions $p=p(n)$ satisfy that $G_{n,p}$ is Ramsey for $(H_1,\dots,H_r)$ a.a.s.\footnote{As usual, $G_{n,p}$ denotes the binomial random graph with edge probability $p$ and we say that an event happens \emph{asymptotically almost surely (a.a.s.)} if its probability tends to $1$ as $n \to \infty$.}  In the case $H_1=\dotsb=H_r$, this question was resolved in the remarkable work of R\"odl and Ruci\'nski~\cite{MR1249720,MR1262978,MR1276825}. In order to state their result, we need the following terminology and notation. For a graph $J$, we denote by $v_J$ and $e_J$ the number of vertices and edges, respectively, of $J$. 
The \emph{maximal $2$-density} of a non-empty graph $H$ with $v_H \geq 3$ is then defined\footnote{We also define $m_2(K_2) \coloneqq 1/2$ and $m_2(H) \coloneqq 0$ if $H$ has no edges.} to be
\[
	m_2(H) \coloneqq \max \left\{ \frac{e_J-1}{v_J-2} : J \subseteq H, v_J \geq 3\right\}.
\]
With this notation, we can state the random Ramsey theorem of R\"odl and Ruci\'nski~\cite{MR1276825}.
\begin{theorem}[R\"odl--Ruci\'nski \cite{MR1276825}]
  \label{thm:RR}
  For every graph $H$ which is not a forest\footnote{R\"odl and Ruci\'nski also determined the Ramsey threshold when $H$ is a forest, but for simplicity we do not state this more general result.} and every integer $r \geq 2$, there exist constants $c,C>0$ such that
  \[
    \lim_{n \to \infty}\pr(G_{n,p}\text{ is Ramsey for $H$ in $r$ colors}) = 
    \begin{cases}
      1&\text{if } p \geq Cn^{-1/m_2(H)},\\
      0&\text{if } p \leq cn^{-1/m_2(H)}.
    \end{cases}
  \]
\end{theorem}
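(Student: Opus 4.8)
The theorem splits into a \emph{$0$-statement} (for $p\leq cn^{-1/m_2(H)}$, a.a.s.\ $G_{n,p}$ is \emph{not} Ramsey for $H$ in $r$ colours) and a \emph{$1$-statement} (for $p\geq Cn^{-1/m_2(H)}$, a.a.s.\ it \emph{is}), and I would attack them by entirely different methods. For the $0$-statement I would begin with two reductions. First, the maximum defining $m_2(H)$ is attained on some \emph{$2$-balanced} subgraph $H'\subseteq H$ (one with $v_{H'}\geq 3$ and $m_2(J)<m_2(H')$ for every proper $J\subseteq H'$), and any colouring of $G_{n,p}$ with no monochromatic $H'$ has no monochromatic $H$, since every copy of $H$ contains a copy of $H'$; so it suffices to handle $2$-balanced $H$. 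Second, a good $2$-colouring is in particular a good $r$-colouring, so it suffices to partition $E(G_{n,p})$ into two $H$-free graphs.

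For the $1$-statement I would pass through the dense world and transfer. The dense input is immediate from Ramsey's theorem: for $m\geq R_r(H)$ every $r$-colouring of $E(K_m)$ contains a monochromatic copy of $H$, and averaging over $R_r(H)$-vertex subsets upgrades this to $\Omega(m^{v_H})$ monochromatic copies. To transfer it to $G_{n,p}$ I would invoke the hypergraph container method: apply the container theorem to the $e_H$-uniform hypergraph on vertex set $E(K_n)$ whose edges are the copies of $H$. The codegree hypotheses the theorem requires are exactly governed by $m_2(H)$ — this is where the exponent $-1/m_2(H)$ is born — and one obtains a family $\mathcal{C}$ of at most $\exp(O(n^{2-1/m_2(H)}))$ ``containers'', each spanning only $o(n^{v_H})$ copies of $H$, such that every $H$-free graph lies inside some member of $\mathcal{C}$. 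If $G_{n,p}$ admitted a good $r$-colouring, each colour class would be $H$-free, hence contained in a container; the goal is to show this a.a.s.\ cannot occur.

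For the $0$-statement, the guiding principle is that when $p=cn^{-1/m_2(H)}$ with $c$ small, the copies of $H$ in $G_{n,p}$ are sparsely interlinked: a first-moment computation — in which every cluster of $k$ pairwise-overlapping copies contributes a factor $c^{\Omega(k)}$ — shows that a.a.s.\ $G_{n,p}$ has no dense agglomeration of copies of $H$, so that after deleting a sparse set of ``bad'' edges (those lying in abnormally many copies) every surviving copy of $H$ shares an edge with only boundedly many others. One then $2$-colours the surviving copies by a greedy peeling argument (or the Lov\'asz Local Lemma) and colours the deleted edges separately, using that a.a.s.\ they span an essentially $H$-free graph; patching the two colourings so that no monochromatic copy of $H$ straddles them is the delicate point.

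The principal obstacle is in the $1$-statement: a crude union bound over $r$-tuples of containers \emph{fails}, because $r$ containers may jointly cover all of $E(K_n)$ (their union need not be $H$-free), so one cannot simply bound the probability that $E(G_{n,p})$ avoids the complement of $C_1\cup\dots\cup C_r$. When $H$ is bipartite this is salvageable: some colour class has $\geq e(G_{n,p})/r$ edges whereas a fixed container has only $o(n^2)$ edges, so by Chernoff's bound the number of edges of $G_{n,p}$ inside it is $o(e(G_{n,p}))$, and a union bound over $\mathcal{C}$ finishes the case once $C$ is large. For general $H$ the containers can be dense, and one needs an extra device, for instance a two-round exposure: the first batch of random edges (together with a choice of $r$ containers covering the colour classes it sees) pins the colouring down to a bounded amount of information, and the many copies of $H$ that are missing a single edge it produces become constraints whose missing edges the second batch supplies so densely that no consistent colouring can survive. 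Carrying this out, together with establishing the ``balanced supersaturation'' estimate that keeps the container count at $\exp(O(n^{2-1/m_2(H)}))$ without a spurious logarithmic factor, is the technical heart of the proof.
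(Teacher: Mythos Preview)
This theorem is not proved in the paper; it is quoted as a background result of R\"odl and Ruci\'nski, so there is no ``paper's own proof'' to compare against. The paper does, however, describe the architecture of the known arguments: the $1$-statement admits a short proof via the hypergraph container method (Nenadov--Steger), while the $0$-statement factors as a \emph{deterministic lemma} (every graph $G$ that is Ramsey for $H$ in $r$ colours satisfies $m(G)>m_2(H)$) together with a \emph{probabilistic lemma} showing this deterministic fact suffices.

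Your $1$-statement outline is exactly the container approach the paper alludes to, and you correctly identify both the obstacle (an $r$-tuple of containers may cover all of $E(K_n)$, so a na\"{\i}ve union bound fails) and the standard cures (sparsity of containers when $H$ is bipartite; a two-round exposure for general $H$). That part is fine as a plan.

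Your $0$-statement plan, by contrast, does not match the structure the paper describes and has a real gap. The route the paper emphasises---and which underlies both R\"odl--Ruci\'nski's original proof and the paper's own asymmetric framework---is to first prove the deterministic colouring statement $m(G)\le m_2(H)\Rightarrow G$ is not Ramsey, and then to show probabilistically that a.a.s.\ every minimally Ramsey (equivalently, core-supporting) subgraph of $G_{n,p}$ has $m(G)\le m_2(H)$. Your sketch instead proposes to delete ``bad'' high-codegree edges, colour the rest by peeling or the Local Lemma, and patch. The unjustified step is the claim that the deleted edges ``span an essentially $H$-free graph'': there is no a priori reason the set of edges lying in many copies of $H$ should itself be $H$-free, and you give no mechanism for colouring it or for preventing monochromatic copies that straddle the two pieces (you yourself flag this as ``the delicate point'' but do not address it). Without isolating a clean deterministic statement of the form ``sparse implies colourable'', this outline does not close.
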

As with many such threshold results for random graph properties, \cref{thm:RR} really consists of two statements: the \emph{$1$-statement}, which says that $G_{n,p}$ satisfies the desired property a.a.s.\ once $p$ is above some threshold, and the \emph{$0$-statement}, which says that $G_{n,p}$ a.a.s.\ fails to satisfy the desired property if $p$ is below some threshold.

In recent years, there has been a great deal of work on transferring combinatorial theorems, such as Ramsey's theorem or Tur\'an's theorem~\cite{MR0018405}, to sparse random settings.  As a consequence, several new proofs of the $1$-statement of \cref{thm:RR} have been found.  Two such proofs were first given by Conlon--Gowers~\cite{MR3548529} and, independently, by Friedgut--R\"odl--Schacht~\cite{MR2760356} (see also Schacht~\cite{MR3548528}) with the use of their transference principles.  More recently, Nenadov and Steger~\cite{MR3438289} found a very short proof of the 1-statement of \cref{thm:RR} that uses the hypergraph container method of Saxton--Thomason~\cite{MR3385638} and Balogh--Morris--Samotij~\cite{MR3327533}.

However, these techniques are not suitable for proving the respective 0-statements such as that in \cref{thm:RR}.  Furthermore, whereas the 0-statement of the aforementioned sparse random analogue of Tur\'an's theorem is very easy to establish, proving the 0-statement of \cref{thm:RR} requires a significant amount of work.  To understand this, suppose that $G$ is some graph that is Ramsey for $H$ in $r$ colors.  As is well-known (see e.g.\ \cite[Theorem 3.4]{MR1782847}), the probability that $G_{n,p}$ contains $G$ as a subgraph is bounded away from zero if (and only if) $p = \Omega(n^{-1/m(G)})$, where $m(G)$ is the \emph{maximal density} of $G$, defined by
\[
	m(G) \coloneqq \max \left\{ \frac{e_J}{v_J}: J\subseteq G, v_J \geq 1\right\}.
\]
In particular, if $m(G) \leq m_2(H)$, then the $0$-statement of \cref{thm:RR} cannot hold.  Therefore, a prerequisite for any proof of the $0$-statement is the following result, which R\"odl--Ruci\'nski~\cite{MR1249720} termed the \emph{deterministic lemma}: If $G$ is Ramsey for $H$ in $r$ colors, then $m(G) > m_2(H)$.  We stress that this result is by no means trivial; in particular, it turns out to be false if we remove the assumption that $H$ is not a forest \cite{MR1276825,MR1768845}, or if we move from graphs to hypergraphs \cite{MR3725732}.

To complement the deterministic lemma, R\"odl--Ruci\'nski also proved what they termed a \emph{probabilistic lemma}. Loosely speaking, this is a result that says that the $0$-statement of \cref{thm:RR} is actually \emph{equivalent} to the deterministic lemma. In other words, an obvious necessary condition for the validity of the $0$-statement---the non-existence of a graph $G$ that is Ramsey for $H$ and satisfies $m(G) \leq m_2(H)$---is also a sufficient condition.

\subsection{Asymmetric Ramsey properties of random graphs}
Given our good understanding of Ramsey properties of random graphs in the symmetric case, provided by \cref{thm:RR}, it is natural to ask what happens if we remove the assumption that $H_1=\dotsb=H_r$. This question was first raised by Kohayakawa and Kreuter~\cite{MR1609513}, who proposed a natural conjecture for the threshold controlling when $G_{n,p}$ is Ramsey for an arbitrary tuple $(H_1,\dotsc,H_r)$. To state their conjecture, we need the notion of the \emph{mixed $2$-density}:  For graphs $H_1,H_2$ with $m_2(H_1) \geq m_2(H_2)$, their mixed $2$-density is defined as
\[
	m_2(H_1,H_2) \coloneqq \max \left\{ \frac{e_{J}}{v_{J}-2+1/m_2(H_2)} : J \subseteq H_1, v_J \geq 2 \right\}.
\]
With this terminology, we may state the conjecture of Kohayakawa and Kreuter~\cite{MR1609513}.
\begin{conjecture}[Kohayakawa--Kreuter~\cite{MR1609513}]\label{conj:KK}
  Let $H_1,\dots,H_r$ be graphs satisfying $m_2(H_1) \geq \dotsb \geq m_2(H_r)$ and $m_2(H_2) >1$. There exist constants $c,C>0$ such that
  \[
    \lim_{n \to \infty}\pr(G_{n,p}\text{ is Ramsey for } (H_1,\dotsc,H_r)) = 
    \begin{cases}
      1&\text{if }p \geq Cn^{-1/m_2(H_1,H_2)},\\
      0&\text{if }p \leq cn^{-1/m_2(H_1,H_2)}.
    \end{cases}
  \]
\end{conjecture}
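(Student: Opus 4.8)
The plan is to treat the $1$-statement and the $0$-statement of \cref{conj:KK} separately.  The $1$-statement is, by now, known in full generality: it follows, via the hypergraph container method, from work of Mousset, Nenadov, and Samotij, building on the container-based proof of the symmetric $1$-statement by Nenadov and Steger.  So I would take the $1$-statement as given and concentrate entirely on the $0$-statement, namely that when $p \leq c n^{-1/m_2(H_1,H_2)}$ the random graph $G_{n,p}$ a.a.s.\ admits an $r$-colouring of its edges with no copy of $H_i$ in colour $i$.

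For the $0$-statement I would follow the ``peeling'' strategy introduced by Marciniszyn, Skokan, Sp\"ohel, and Steger.  At $p$ of this order the copies of $H_1$ in $G_{n,p}$ are comparatively rare — their number is of order roughly $n^{2-1/m_2(H_2)}$ — and the aim is to find a set $F \subseteq E(G_{n,p})$ that meets every copy of $H_1$ (a \emph{cover}) and such that $G_{n,p}[F]$ is no denser, in any subgraph, than a binomial random graph at the threshold $n^{-1/m_2(H_2,H_3)}$ that governs the shorter tuple $(H_2,\dotsc,H_r)$.  Here one uses the chain $m_2(H_1,H_2) \geq m_2(H_2,H_3) \geq \dotsb$, itself a consequence of $m_2(H_i,H_{i+1}) \geq m_2(H_{i+1}) \geq m_2(H_{i+1},H_{i+2})$, which is what makes the induction on $r$ well-founded.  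Given such an $F$, one colours every edge outside $F$ with colour $1$ — creating no monochromatic $H_1$, since $F$ is a cover — and is left with $(r-1)$-colouring $G_{n,p}[F]$ so as to avoid $H_2,\dotsc,H_r$; this is the original problem for the tuple $(H_2,\dotsc,H_r)$, the base case $r=2$ being the (known) two-colour $0$-statement.

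This reduces everything to two ingredients.  The first is a \emph{probabilistic lemma}: that a cover $F$ of the required sparsity exists a.a.s.\ I would prove it by analysing the hypergraph on vertex set $E(G_{n,p})$ whose hyperedges are the copies of $H_1$, using the hypergraph container method to extract a small, well-spread cover and Janson-type inequalities to bound the local density of $G_{n,p}[F]$.  The second ingredient is a purely \emph{deterministic statement}, and this is where the real difficulty lies: essentially, that no graph $G$ that is Ramsey for $(H_1,\dotsc,H_r)$ can be so sparse as to appear in $G_{n,p}$ with positive probability when $p = \Theta(n^{-1/m_2(H_1,H_2)})$ — equivalently, that every such Ramsey graph satisfies $m(G) > m_2(H_1,H_2)$.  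This condition is plainly necessary for the $0$-statement, and in the symmetric case it is exactly R\"odl and Ruci\'nski's deterministic lemma.  Contrapositively, it asserts that any graph $G$ all of whose subgraphs $J$ satisfy $m(J) \leq m_2(H_1,H_2)$ can have its edge set partitioned into $r$ classes, the $i$-th of which contains no copy of $H_i$; one may even hope for the cleaner conclusion that the $i$-th class has $2$-density below $m_2(H_i)$ — a natural deterministic graph-partitioning statement that I believe is of independent interest.  I would prove the deterministic statement for almost all tuples — for instance whenever the densest $H_i$ are strictly $2$-balanced and the $2$-densities $m_2(H_1),\dotsc,m_2(H_r)$ are distinct, where the near-extremal graphs are rigid enough that the partition can be written down — and pose the general case as a conjecture.

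The principal obstacle is precisely this deterministic statement for arbitrary tuples.  It resists any purely local, degeneracy-based approach: the analogue fails outright if $H$ is allowed to be a forest, and fails again for hypergraphs, so one must exploit global structural features of the hypothetical sparse Ramsey graph, and for $r \geq 3$ colours no mechanism for doing so is currently known.  A secondary, more technical difficulty is that the probabilistic lemma must deliver a cover $F$ carrying not merely an upper bound on the density of $G_{n,p}[F]$ but enough uniform sparsity to survive all $r-1$ rounds of peeling — and since $G_{n,p}[F]$ is not itself a binomial random graph, re-entering the induction forces a careful choice of exposure schedule together with a correspondingly delicate accounting of densities.
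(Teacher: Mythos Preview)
The statement is a conjecture that the paper does not prove in full --- it reduces the open $0$-statement to a deterministic colouring lemma (\cref{thm:prob-lemma}) and verifies that lemma in many but not all cases --- so a complete proof is not on offer, as your last paragraph tacitly concedes.  That said, the route you sketch has two concrete gaps.

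First, your induction on $r$ is both unnecessary and unfounded.  You peel off an $H_1$-cover $F$ and recurse on $(H_2,\dotsc,H_r)$ inside $G_{n,p}[F]$, declaring the base case $r=2$ to be ``known''.  But the two-colour asymmetric $0$-statement is \emph{not} known in general --- it is precisely the open heart of \cref{conj:KK} --- so your induction has no base.  Moreover the reduction to two colours is trivial in the other direction: any $2$-colouring of $G_{n,p}$ avoiding a red $H_1$ and a blue $H_2$ is already a valid $r$-colouring (simply never use colours $3,\dotsc,r$), so one may take $r=2$ from the outset with no probabilistic work at all.  The paper does exactly this.

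Second, your probabilistic lemma is aimed at the wrong object.  The paper does not look for sparse covers or invoke containers on the $0$-side; instead it shows that any minimally Ramsey $G$ supports a \emph{core} (\cref{lem:ramsey is core}) and runs an exploration process on that core, producing a small family $\S$ of dense certificates over which a union bound succeeds (\cref{lem:exists-S}).  The output is the clean equivalence of \cref{thm:prob-lemma}: a.a.s.\ every core-supporting $G\subseteq G_{n,p}$ has $m(G)\leq m_2(H_1,H_2)$, so the $0$-statement for $(H_1,H_2)$ holds if and only if no graph $G$ with $m(G)\leq m_2(H_1,H_2)$ is Ramsey for that pair.  Your cover-based route never reaches such an equivalence --- $G_{n,p}[F]$ is not binomial, and ``sparse enough to survive peeling'' is not a hypothesis you can feed back into a statement about $G_{n,p}$; you would in effect have to prove the $0$-statement for a deterministic graph of bounded maximal density, which \emph{is} the deterministic lemma.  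Your instinct that the crux is a deterministic partition statement is correct (this is isolated as \cref{conj:partitioning}), but the probabilistic half is done by exploration of cores, not by covers and containers.
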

The assumption $m_2(H_2) >1$ is equivalent to requiring that $H_1$ and $H_2$ are not forests; 
it was added by Kohayakawa, Schacht, and Sp\"ohel~\cite{MR3143588} to rule out sporadic counterexamples, in analogy with the assumption that $H$ is not a forest in \cref{thm:RR}.

The role of the mixed $2$-density $m_2(H_1, H_2)$ in the context of \cref{conj:KK} can seem a little mysterious at first, but there is a natural (heuristic) explanation.  Since one can color all edges that do not lie in a copy of $H_1$ with color $1$, the only important edges are those that do lie in copies of $H_1$.  The mixed $2$-density is defined in such a way that $p =\Theta(n^{-1/m_2(H_1,H_2)})$ is the threshold at which the number of copies of (the densest subgraph of) each of $H_2, \dotsc, H_r$ is at least of the same order of magnitude as the number of edges in the union of all copies of (the densest subgraph of) $H_1$ in $G_{n,p}$.  Since at least one edge in each copy of $H_1$ must receive a color from $\{2, \dotsc, r\}$, this is the point where avoiding monochromatic copies of $H_2, \dotsc, H_r$ becomes difficult.

\cref{conj:KK} has received a great deal of attention over the years, and has been proved in a number of special cases.  Following a sequence of partial results~\cite{MR1609513, MR2531778, MR3143588, MR3725732, MR3899160}, the $1$-statement of \cref{conj:KK} was proved by Mousset, Nenadov, and Samotij~\cite{MR4173138} with the use of the container method as well as a randomized ``typing'' procedure.  We henceforth focus on the $0$-statement, where progress has been more limited.

Note that, in order to prove the $0$-statement, one can make several simplifying assumptions. First, one can assume that $r$, the number of colors, is equal to $2$. Indeed, if one can a.a.s.\ $2$-color the edges of $G_{n,p}$ and avoid monochromatic copies of $H_1,H_2$ in colors $1,2$, respectively, then certainly $G_{n,p}$ is not Ramsey for $(H_1,\dots,H_r)$. Furthermore, if $H_2' \subseteq H_2$ is a subgraph satisfying $m_2(H_2')=m_2(H_2)$, then the $0$-statement for the pair $(H_1,H_2')$ implies the $0$-statement for $(H_1,H_2)$, as any coloring with no monochromatic copy of $H_2'$ in particular has no monochromatic copy of $H_2$. Thus, we may assume that $H_2$ is \emph{strictly $2$-balanced}, meaning that $m_2(H_2') < m_2(H_2)$ for any $H_2' \subsetneq H_2$. For exactly the same reason, we may assume that $H_1$ is \emph{strictly $m_2(\cdot,H_2)$-balanced}, meaning that $m_2(H_1',H_2)<m_2(H_1,H_2)$ for any $H_1' \subsetneq H_1$. Let us say that the pair $(H_1,H_2)$ is \emph{strictly balanced} if $H_2$ is strictly $2$-balanced and $H_1$ is strictly $m_2(\cdot,H_2)$-balanced. Additionally, let us say that $(H_1',H_2')$ is a \emph{strictly balanced pair of subgraphs} of $(H_1,H_2)$ if $(H_1',H_2')$ is strictly balanced and satisfies $m_2(H_2')=m_2(H_2)$ and $m_2(H_1',H_2')=m_2(H_1,H_2)$. All previous works on the $0$-statement of \cref{conj:KK} have made these simplifying assumptions, working in the case $r=2$ and with a strictly balanced pair $(H_1,H_2)$.

The original paper of Kohayakawa and Kreuter~\cite{MR1609513} proved the $0$-statement of \cref{conj:KK} when $H_1$ and $H_2$ are cycles.  This was extended to the case when both $H_1$ and $H_2$ are cliques in \cite{MR2531778}, and to the case when $H_1$ is a clique and $H_2$ is a cycle in \cite{MR4597167}. To date, the most general result is due to Hyde \cite{MR4565396}, who proved the $0$-statement of \cref{conj:KK} for almost all pairs of regular graphs $(H_1,H_2)$; in fact, this follows from Hyde's main result~\cite[Theorem~1.9]{MR4565396}, which establishes a certain deterministic condition whose validity implies the $0$-statement of \cref{conj:KK}.  Finally, the first two authors \cite{2305.19964} recently proved the $0$-statement of \cref{conj:KK} in the case where $m_2(H_1)=m_2(H_2)$. Because of this, we henceforth focus on the case that $m_2(H_1)>m_2(H_2)$.

\subsection{New results}

As in the symmetric setting, a necessary prerequisite for proving the $0$-statement of \cref{conj:KK} is proving the following \emph{deterministic lemma}: If $G$ is Ramsey for $(H_1,H_2)$, then $m(G) > m_2(H_1,H_2)$. The main result in this paper is a corresponding probabilistic lemma, which states that this obvious necessary condition is also sufficient.
\begin{theorem}\label{thm:prob-lemma}
	The $0$-statement of \cref{conj:KK} holds if and only if, for every strictly balanced pair $(H_1,H_2)$, every graph $G$ that is Ramsey for $(H_1, H_2)$ satisfies $m(G) > m_2(H_1, H_2)$.
\end{theorem}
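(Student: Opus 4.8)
The ``only if'' direction is a variant of the standard argument showing that the deterministic lemma is a necessary condition for the $0$-statement. Suppose that, for some strictly balanced pair $(H_1,H_2)$ with $m_2(H_2)>1$, there is a graph $G$ that is Ramsey for $(H_1,H_2)$ and satisfies $m(G)\leq m_2(H_1,H_2)$. Fix any constant $c>0$ and put $p=cn^{-1/m_2(H_1,H_2)}$; since $m(G)\leq m_2(H_1,H_2)$ we have $p\geq cn^{-1/m(G)}$, so by monotonicity of $\pr(G\subseteq G_{n,p})$ in $p$ together with \cite[Theorem~3.4]{MR1782847}, the probability that $G_{n,p}$ contains a copy of $G$ is bounded away from $0$. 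Any such $G_{n,p}$ is Ramsey for $(H_1,H_2)$, so $\pr(G_{n,p}\text{ is Ramsey for }(H_1,H_2))$ does not tend to $0$, contradicting the $0$-statement of \cref{conj:KK}.

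For the ``if'' direction we assume the stated condition and prove the $0$-statement. Using the reductions described above --- and treating the case $m_2(H_1)=m_2(H_2)$ via \cite{2305.19964} --- it suffices to show that, for every strictly balanced pair $(H_1,H_2)$ with $m_2(H_1)>m_2(H_2)$ and $m_2(H_2)>1$, there is a small constant $c>0$ with the property that $G_{n,p}$ is a.a.s.\ \emph{not} Ramsey for $(H_1,H_2)$ whenever $p\leq cn^{-1/m_2(H_1,H_2)}$. A short computation with the mediant inequality shows that such a pair consists of two $2$-connected graphs and that $m_2(H_1,H_2)$ is rational (as $1/m_2(H_2)$ is rational). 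We will a.a.s.\ produce a $\{1,2\}$-colouring of $E(G_{n,p})$ with no monochromatic $H_1$ in colour $1$ and no monochromatic $H_2$ in colour $2$, as follows. Let $G^\ast\subseteq G_{n,p}$ be the union of the edge sets of all copies of $H_1$, and colour every edge outside $G^\ast$ with colour $1$. Every copy of $H_1$ lies inside $G^\ast$, so it remains to colour $E(G^\ast)$. Since $H_1$ and $H_2$ are $2$-connected, each copy of $H_1$ or $H_2$ that is contained in $G^\ast$ lies inside a single block of $G^\ast$, and distinct blocks share no edges; hence it suffices to colour each block $B$ of $G^\ast$ separately so that $B$ has no monochromatic $H_1$ in colour $1$ and no monochromatic $H_2$ in colour $2$ --- that is, so that $B$ is not Ramsey for $(H_1,H_2)$.

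The heart of the argument is the structural claim that, a.a.s., every block of $G^\ast$ has at most $L=L(H_1,H_2)$ vertices. This is proved by a first-moment estimate over the possible ``constellations'' of copies of $H_1$: any $2$-connected union of copies of $H_1$ on many vertices has too many vertices relative to its edge count to occur typically at $p=cn^{-1/m_2(H_1,H_2)}$, because attaching a further copy strictly decreases the ``deficiency'' $v_F-e_F/m_2(H_1,H_2)$ by a positive constant --- and it is here that the assumptions $m_2(H_2)>1$ and $m_2(H_1)>m_2(H_2)$ are used --- so the expected number of large constellations tends to $0$. Granting this, a routine first-moment bound (using that $m_2(H_1,H_2)$ is rational, so that exceeding it forces exceeding it by a bounded-below amount) shows that a.a.s.\ every subgraph of $G_{n,p}$ on at most $L$ vertices has density at most $m_2(H_1,H_2)$; in particular a.a.s.\ every block $B$ of $G^\ast$ satisfies $m(B)\leq m_2(H_1,H_2)$. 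The assumed condition, applied to the strictly balanced pair $(H_1,H_2)$, then gives that every such $B$ is not Ramsey for $(H_1,H_2)$ and therefore admits the required colouring; assembling these block colourings with colour $1$ on $E(G_{n,p})\setminus E(G^\ast)$ yields the desired colouring of $G_{n,p}$. I expect the structural claim to be the main obstacle: one must control \emph{all} isomorphism types of $2$-connected unions of copies of $H_1$ --- not merely chains or cycles of copies --- and verify that the deficiency drops for every way of gluing on a new copy, including degenerate gluings sharing more than a single edge, while bounding the entropy of these configurations tightly enough to keep the first-moment sum summable for small $c$. This analysis refines the structural work of \cite{MR4565396} and of earlier partial results on the $0$-statement.
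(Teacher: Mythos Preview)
Your approach differs substantially from the paper's, and the central structural claim---that a.a.s.\ every block of $G^\ast$ has at most $L=L(H_1,H_2)$ vertices---is false in general. Take $H_1=K_3$ and $H_2=C_4$, so that $(H_1,H_2)$ is strictly balanced with $\alpha=m_2(K_3,C_4)=9/5$. At $p=cn^{-5/9}$, consider a ``necklace'' of $k$ triangles on vertex set $\{1,\dotsc,2k\}$, with triangles $(2i-1,2i,2i+1)$ for $i=1,\dotsc,k$ (indices modulo $2k$). This is a $2$-connected union of copies of $K_3$ with $2k$ vertices and $3k$ edges, and its expected count in $G_{n,p}$ is of order $c^{3k}n^{2k-3k/\alpha}=c^{3k}n^{k/3}\to\infty$. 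Hence a.a.s.\ $G^\ast$ contains such necklaces for every fixed $k$, and the blocks of $G^\ast$ are not of bounded size. The flaw in the deficiency heuristic is exactly here: when a new copy of $H_1$ is attached along a single vertex, the deficiency $v_F-e_F/\alpha$ changes by $(v_{H_1}-1)-e_{H_1}/\alpha$, which for $K_3$ equals $2-3/\alpha=1/3>0$. Your claim that ``attaching a further copy strictly decreases the deficiency'' only holds for overlaps on at least two vertices (this is \cref{lem:>alpha}), and a $2$-connected constellation need not admit a build sequence using only such overlaps.

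The paper avoids $G^\ast$ entirely. A minimally Ramsey subgraph supports a \emph{core} (\cref{def:core}, \cref{lem:ramsey is core}): every edge of every $H_1$-copy in the core is the unique edge of intersection with some $H_2$-copy, and vice versa. The exploration of a core therefore has access to $L$-steps as well as $H$-steps; when no overlapping $H_1$-copy is available, one adds an $H_2$-copy $\cop{L}$ together with, for each new edge of $\cop{L}$, an $H_1$-copy meeting $\cop{L}$ in exactly that edge. The key calculation, \cref{lem:increase-balance}, shows that the balance $e-\alpha v$ is nondecreasing along this process---the sparse $L$-part is compensated by the glued $H$-copies---and strictly increases at every degenerate step. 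This is where the structure of $H_2$ genuinely enters; your argument uses $H_2$ only through the numerical value of $\alpha$, and the necklace example demonstrates that this is not enough.
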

More precisely, we prove that if $(H_1,H_2)$ is any pair of graphs and $(H_1',H_2')$ is a strictly balanced pair of subgraphs of $(H_1,H_2)$, then the $0$-statement of \cref{conj:KK} holds for $(H_1,H_2)$ if every graph $G$ which is Ramsey for $(H_1',H_2')$ satisfies $m(G) > m_2(H_1',H_2') = m_2(H_1,H_2)$.

While we believe that the probabilistic lemma, \cref{thm:prob-lemma}, is our main contribution, we are able to prove the deterministic lemma in a wide range of cases. This implies that the $0$-statement of \cref{conj:KK} is true for almost all pairs of graphs. The most general statement we can prove is slightly tricky to state because of the necessity of passing to a strictly balanced pair of subgraphs; however, here is a representative example of our results, which avoids this technicality and still implies \cref{conj:KK} for almost all pairs of graphs. We state the more general result in \cref{thm:main} below.
\begin{theorem}\label{thm:m2-2.5}
	\cref{conj:KK} holds for all sequences $H_1,\dots,H_r$ of graphs satisfying 
	$m_2(H_1) \geq \dotsb \geq m_2(H_r)$ and $m_2(H_2) > \frac {11}5$.
      \end{theorem}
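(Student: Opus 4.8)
We prove \cref{thm:m2-2.5} by combining several known results with a new proof of the relevant \emph{deterministic lemma}, and the bulk of the work is in the latter.

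The $1$-statement of \cref{conj:KK} is a theorem of Mousset, Nenadov, and Samotij~\cite{MR4173138}, so it suffices to establish the $0$-statement under the hypothesis $m_2(H_2)>\frac{11}5$. As explained above, we may take $r=2$, and we fix a strictly balanced pair of subgraphs $(H_1',H_2')$ of $(H_1,H_2)$; it satisfies $m_2(H_2')=m_2(H_2)>\frac{11}5$ and $m_2(H_1',H_2')=m_2(H_1,H_2)$. If $m_2(H_1')=m_2(H_2')$, the $0$-statement for $(H_1,H_2)$ is known, by the theorem of the first two authors on the case of equal $2$-densities~\cite{2305.19964}. We may therefore assume $m_2(H_1')>m_2(H_2')$, and then, by the probabilistic lemma (\cref{thm:prob-lemma}), it is enough to prove the corresponding deterministic lemma. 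Renaming $(H_1',H_2')$ back to $(H_1,H_2)$ and writing $m\coloneqq m_2(H_1,H_2)$, the entire theorem reduces to the following purely combinatorial statement: if $(H_1,H_2)$ is a strictly balanced pair with $m_2(H_1)>m_2(H_2)>\frac{11}5$, and $G$ is Ramsey for $(H_1,H_2)$, then $m(G)>m$.

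We prove the contrapositive. Assuming $m(G)\leq m$, we construct a $2$-colouring of $E(G)$ with no copy of $H_1$ in the first colour (``red'') and no copy of $H_2$ in the second (``blue''). Equivalently, it suffices to produce a set $B\subseteq E(G)$ that is $H_2$-free and meets the edge set of every copy of $H_1$ in $G$: colouring $B$ blue and $E(G)\setminus B$ red then works. Any edge of $G$ lying in no copy of $H_1$ may be left out of $B$ for free, so we may assume that every edge of $G$ lies in a copy of $H_1$ (discarding edges only decreases $m(G)$, so the hypothesis $m(G)\leq m$ persists). To build $B$ we process the copies of $H_1$ in $G$ one at a time, in an order chosen as part of the argument, maintaining throughout an $H_2$-free set $B$: whenever a copy $J$ of $H_1$ with $E(J)\cap B=\emptyset$ is reached, we insert into $B$ some edge $e\in E(J)$ for which $B\cup\{e\}$ remains $H_2$-free. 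If this never fails, $B$ has the desired properties and we are done. So suppose it fails at some copy $J$ of $H_1$: then for every $e\in E(J)$ the set $B\cup\{e\}$ already contains a copy $D_e$ of $H_2$ with $e\in E(D_e)$ and $E(D_e)\setminus\{e\}\subseteq B$, and in particular each $D_e$ shares exactly the edge $e$ with $J$. Following the blue edges of the $D_e$ back to the earlier copies of $H_1$ that occasioned their insertion, and iterating, one extracts a subgraph $K\subseteq G$ witnessing the failure of the process.

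The crux of the proof is to show that any such witness $K$ satisfies $m(K)>m$, contradicting $m(G)\leq m$. This is an extremal estimate that exploits strict balancedness: gluing a fresh copy of $H_2$ onto the growing union along a common subgraph with at least three vertices forces the edge-to-vertex ratio up past $m_2(H_2)$, gluing a fresh copy of $H_1$ along such a subgraph forces it up past $m$, and by a judicious choice of the order in which the copies of $H_1$ are processed one arranges that $K$ accumulates enough of this surplus to beat $m$ outright. The hypothesis $m_2(H_2)>\frac{11}5$ enters exactly in making this estimate go through --- most delicately in bounding the loss incurred when a copy of $H_1$ and a copy of $H_2$ overlap in only two vertices, and in excluding the sporadic low-density configurations where the surpluses fail to dominate; $\frac{11}5$ is precisely the value of $m_2(H_2)$ at which the accounting closes. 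I expect this density estimate to be the main obstacle: one must track vertex counts and edge counts simultaneously through a recursively defined configuration, disentangle its three kinds of overlaps --- $H_1$ with $H_1$, $H_1$ with $H_2$, and $H_2$ with $H_2$, governed respectively by $m_2(H_1)$, $m=m_2(H_1,H_2)$, and $m_2(H_2)$ --- and orchestrate the processing order so that each unavoidable overlap of the ``right'' type can be charged a concrete surplus. Obtaining the clean threshold $\frac{11}5$, rather than the weaker constant that a more wasteful version of the argument would yield, is where essentially all the technical effort, and a careful analysis of a handful of extremal configurations, will be concentrated.
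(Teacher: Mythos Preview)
Your reduction to the deterministic lemma for a strictly balanced pair $(H_1,H_2)$ with $m_2(H_1)>m_2(H_2)>\tfrac{11}{5}$ is correct and matches the paper. The divergence is entirely in how you attack the deterministic lemma.

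The paper does not run a greedy process or extract a density witness. It observes that the deterministic lemma follows at once from either of two structural facts about $H_2$: if $\chi(H_2)\geq 3$, or if $m_1(H_2)>2$ (equivalently, $H_2$ is not a union of two forests), then an explicit colouring of $G$ works. In the first case one splits $G$ into a bipartite part (blue, hence $H_2$-free) and a $\lfloor m\rfloor$-degenerate part (red, hence $H_1$-free since strict balancedness gives $\delta(H_1)>m$); in the second, Nash--Williams decomposes $G$ into $\lceil m_1(H_1)\rceil+1$ forests, the union of all but two of which is $H_1$-free, while the remaining two forests cannot contain $H_2$. The number $\tfrac{11}{5}$ then appears only in a three-line check that no strictly $2$-balanced $H_2$ can be simultaneously bipartite and a union of two forests with $m_2(H_2)>\tfrac{11}{5}$: the two constraints give $e_{H_2}\leq 2v_{H_2}-2$ and $e_{H_2}>\tfrac{11}{5}(v_{H_2}-2)+1$, whence $v_{H_2}<7$, and any bipartite graph on six vertices has $m_2\leq m_2(K_{3,3})=2$.

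Your proposal, by contrast, is not a proof as written: the entire content of the deterministic lemma is deferred to a density estimate on the failure witness $K$ that you describe only in outline and yourself flag as ``the main obstacle''. No argument is offered for why the surplus--deficit accounting should close precisely at $m_2(H_2)=\tfrac{11}{5}$; in the paper that constant is an artefact of the numerics $v_{H_2}<7$ together with $m_2(K_{3,3})=2$, not the output of any overlap-tracking scheme. Unless you can actually carry out the bookkeeping and exhibit $m(K)>m$, what you have is a plan rather than a proof, and there is no evidence the plan lands on this particular threshold. Given that the paper's route is a handful of lines with standard tools, the greedy approach looks like substantial extra work with no visible payoff here.
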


As discussed above, \cref{thm:m2-2.5} follows easily from \cref{thm:prob-lemma} and a deterministic lemma for strictly balanced pairs $(H_1,H_2)$ satisfying $m_2(H_1) \geq m_2(H_2) > \frac {11}5$.  The deterministic lemma in this setting is actually very straightforward and follows from standard coloring techniques.

Using a number of other coloring techniques, we can prove the deterministic lemma (and thus \cref{conj:KK}) in several additional cases, which we discuss below. However, let us first propose a conjecture, which we believe to be of independent interest, and whose resolution would immediately imply \cref{conj:KK} in all cases.
\begin{conjecture}\label{conj:partitioning}
	For any graph $G$, there exists a forest $F \subseteq G$ such that
	\[
		m_2(G \setminus F) \leq m(G).
	\]
\end{conjecture}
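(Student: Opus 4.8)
The plan is to argue by induction on the number of vertices of $G$, after some preliminary reductions. If $G$ is a forest then $F:=G$ works, since $m_2$ of the edgeless graph is $0$; so assume $G$ contains a cycle, hence $d:=m(G)\geq 1$. If $G$ is disconnected or has a cut vertex, say $G=G_1\cup G_2$ with $|V(G_1)\cap V(G_2)|\leq 1$, then $m(G)=\max\{m(G_1),m(G_2)\}$, and for $F=F_1\cup F_2$ with $F_i\subseteq G_i$ one has $m_2(G\setminus F)\leq\max\{m_2(G_1\setminus F_1),m_2(G_2\setminus F_2),1\}$, because the $2$-density of a connected subgraph that straddles the cut vertex is at most the mediant of the $2$-densities of its two parts and of $1$; hence we may assume $G$ is $2$-connected. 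One would also like to reduce to $G$ being \emph{$m$-balanced}, i.e.\ $m(G')<m(G)$ for every proper $G'\subsetneq G$, by passing to an inclusion-minimal subgraph of density $m(G)$; but it is unclear how to lift a solution on such a subgraph back to all of $G$, so I would regard that as a separate auxiliary question rather than a genuine reduction.

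The next step is to recast the conclusion. The inequality $m_2(G\setminus F)\leq d$ is equivalent to the requirement that for every vertex set $S$ with $|S|\geq 3$,
\[
	e_F(S)\;\geq\;e_G(S)-d\,(|S|-2)-1,
\]
where $e_H(S)$ is the number of edges of $H$ inside $S$. Since $e_G(S)\leq d\,|S|$ for all $S$, this is vacuous unless $S$ is \emph{near-extremal}, meaning $e_G(S)>d\,(|S|-2)+1$; so the task is to find an acyclic edge set $F$ that contains, inside every near-extremal set $S$, at least $e_G(S)-d(|S|-2)-1$ edges of $G$.

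For the inductive step I would pick a vertex $v$ of minimum degree, so that $\deg_G(v)\leq 2d$, and apply the inductive hypothesis to $G-v$, obtaining a forest $F_0\subseteq G-v$ with $m_2\big((G-v)\setminus F_0\big)\leq m(G-v)\leq d$. The goal is then to extend $F_0$ to a forest $F$ of $G$ by adding some edges incident to $v$: from $m_2((G-v)\setminus F_0)\leq d$ and the estimate $e_{G\setminus F}(S)\leq e_{(G-v)\setminus F}(S\setminus\{v\})+\deg_{G\setminus F}(v)$ one sees that, up to the treatment of a few small subgraphs through $v$, it is enough to arrange that $\deg_{G\setminus F}(v)\leq d$ — that is, to add to $F_0$ about $\deg_G(v)-d$ of the edges at $v$ while keeping the result acyclic. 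Adding $k$ edges at $v$ preserves acyclicity exactly when the chosen neighbours of $v$ lie in $k$ distinct components of $F_0$, so the naive augmentation succeeds provided the neighbourhood of $v$ meets enough components of $F_0$.

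I expect the main obstacle to be precisely this last condition. A stubborn example can place all neighbours of $v$ inside a single component $T$ of $F_0$, forcing us to delete only one edge at $v$ if we insist on a naive augmentation — leaving $v$ with degree up to $2d-1$, far above $d$ — and hence to modify $F_0$ by trading edges of $T$ for edges at $v$. But $F_0$ comes with only the guarantee $m_2((G-v)\setminus F_0)\leq d$, and an arbitrary exchange can manufacture a new near-extremal set on which the required inequality fails; controlling \emph{all} near-extremal sets simultaneously under such exchanges is the real difficulty. This is genuinely delicate because near-extremal sets overlap intricately — already $K_n$ shows the removed forest must be globally structured, since deleting a Hamiltonian path works but deleting a spanning star leaves an almost complete graph, and two copies of $K_4$ sharing an edge show one sometimes cannot take the removed forest to be a union of paths — and because the family $\{H:m_2(H)\leq d\}$ is not matroidal for non-integral $d$, so the matroid-union and polymatroid machinery that would otherwise finish such an argument does not apply. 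A more promising route might be to strengthen the inductive hypothesis to also track the component structure of the removed forest, or to prove the conjecture first in the (nearly) regular case, where the density accounting is tightest, and then bootstrap to the general case; but turning any of these into a complete argument is exactly the step I do not see how to carry out.
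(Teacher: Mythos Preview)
The statement you are attempting to prove is \emph{Conjecture~\ref{conj:partitioning}}, which the paper explicitly leaves open; there is no proof of it in the paper to compare against. What the paper does prove is the special case where $m(G)$ is an integer (\cref{thm:integer partition}), and even that is deferred to an appendix because it requires matroid-theoretic machinery.

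Your write-up is not a proof and you are candid about this: the inductive augmentation step --- extending $F_0$ by edges at a minimum-degree vertex $v$ --- fails exactly when the neighbourhood of $v$ is concentrated in too few components of $F_0$, and you correctly identify that repairing this by edge-exchanges risks breaking the inequality $e_F(S)\geq e_G(S)-d(|S|-2)-1$ on other near-extremal sets. This is not a minor technicality; it is the heart of the problem, and your proposal stops short of resolving it.

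It is worth contrasting your approach with the paper's partial result. For integer $k=m(G)$, the paper does not induct on vertices at all. Instead it shows (\cref{lem:m2 is matroid}) that $\{K\subseteq G:m_2(K)\leq k\}$ forms the independent sets of a matroid --- crucially using that the submodular function $f(J)=k(v_J-2)+1$ is integer-valued --- and then applies Edmonds' matroid partition theorem to the pair consisting of this matroid and the graphic matroid. The rank condition in Edmonds' theorem is verified via a short auxiliary lemma (\cref{lem:check-sc}) that decomposes any $J$ with $m(J)\leq k$ into $k$ pseudoforests. Your remark that ``the family $\{H:m_2(H)\leq d\}$ is not matroidal for non-integral $d$, so the matroid-union and polymatroid machinery\ldots does not apply'' is precisely the obstruction the authors themselves flag; see the comment following \cref{thm:integer partition} and the end of the paragraph after \cref{conj:partitioning}. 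So your diagnosis of why the problem is hard agrees with theirs, but neither your local-inductive route nor their global-matroidal route currently reaches the general case.
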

Here, $G \setminus F$ denotes the graph obtained from $G$ by deleting the edges of $F$ (but not deleting any vertices).
To give some intuition for \cref{conj:partitioning}, we note that $m(G) \leq m_2(G) \leq m(G) +1$ for any graph $G$, and that $m_2(F)=1$ for any forest $F$ which is not a matching. Thus, it is natural to expect that by deleting the edges of a forest, we could decrease $m_2(G)$ by roughly $1$. \cref{conj:partitioning} says that this is roughly the case, in that the deletion of an appropriately-chosen forest can decrease $m_2(G)$ to lie below $m(G)$.

Moreover, we note that \cref{conj:partitioning} easily implies the deterministic lemma in all cases\footnote{Recall that the case of $m_2(H_1)=m_2(H_2)$ was settled in \cite{2305.19964}, so we may freely make this assumption.} with $m_2(H_1)>m_2(H_2)$, and thus implies \cref{conj:KK}. Indeed, it is straightforward to verify in this case that $m_2(H_1) > m_2(H_1,H_2)$ (see \cref{lem:sandwiching m2} below). Now, suppose that $G$ is some graph with $m(G) \leq m_2(H_1,H_2) < m_2(H_1)$. If \cref{conj:partitioning} is true, we may partition the edges of $G$ into a forest $F$ and a graph $K$ with $m_2(K) \leq m(G) < m_2(H_1)$. This latter condition implies, in particular, that $K$ contains no copy of $H_1$. Additionally, by the assumption $m_2(H_2) >1$ in \cref{conj:KK}, we know that $H_2$ contains a cycle and thus $F$ contains no copy of $H_2$. In other words, coloring the edges of $K$ with color $1$ and the edges of $F$ with color $2$ witnesses that $G$ is not Ramsey for $(H_1,\dots,H_r)$. 

Because of this, it would be of great interest to prove \cref{conj:partitioning}. Somewhat surprisingly, we know how to prove \cref{conj:partitioning} under the extra assumption that $m(G)$ is an integer. This extra condition seems fairly artificial, but we do not know how to remove it---our technique uses tools from matroid theory that seem to break down once $m(G)$ is no longer an integer. We present this proof in \cref{sec:matroid-appendix}, in the hope that it may serve as a first step to the full resolution of \cref{conj:partitioning}, and thus \cref{conj:KK}.

Although we are not able to resolve \cref{conj:partitioning}, we do have a number of other techniques for proving the deterministic lemma, and thus \cref{conj:KK}, under certain assumptions. First, we are able to resolve the case when the number of colors is at least three and $m_2(H_2)=m_2(H_3)$.
\begin{theorem}\label{thm:3 colors}
	Let $H_1,\dots,H_r$ be a sequence of graphs with $r \geq 3$ and suppose that $m_2(H_1) \geq m_2(H_2) = m_2(H_3) \geq \dotsb \geq m_2(H_r)$ and $m_2(H_2)>1$. Then \cref{conj:KK} holds for $H_1,\dots,H_r$.
\end{theorem}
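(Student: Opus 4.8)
The plan is to derive \cref{thm:3 colors} from the probabilistic lemma, \cref{thm:prob-lemma}, together with a deterministic lemma that we can establish in this regime. Since the case $m_2(H_1)=m_2(H_2)$ was settled in \cite{2305.19964}, assume $m_2(H_1)>m_2(H_2)$. Fix an appropriate tuple of subgraphs: a strictly balanced pair of subgraphs $(H_1',H_2')$ of $(H_1,H_2)$ (so $m_2(H_2')=m_2(H_2)$ and $m_2(H_1',H_2')=m_2(H_1,H_2)$), a strictly $2$-balanced $H_3'\subseteq H_3$ with $m_2(H_3')=m_2(H_3)=m_2(H_2)$, and, for $i\geq 4$, any $H_i'\subseteq H_i$ with $m_2(H_i')=m_2(H_i)$. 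The proof of \cref{thm:prob-lemma} applies with an arbitrary number of colours (equivalently, with families, as in the paper's general result), so it suffices to prove the deterministic lemma: every graph $G$ which is Ramsey for $(H_1',\dots,H_r')$ satisfies $m(G)>m_2(H_1,H_2)$. Concretely, it is enough to show that every $G$ with $m(G)\leq m_2(H_1,H_2)$ admits a partition $E(G)=E_1\cup E_2\cup E_3$ with $m_2(G[E_1])<m_2(H_1')$ and $m_2(G[E_2]),m_2(G[E_3])<m_2(H_2)$. Indeed, colouring $E_i$ with colour $i$ and leaving colours $4,\dots,r$ empty then avoids a monochromatic copy of $H_i'$ in colour $i$ for each $i$, since a graph whose $2$-density is strictly below $m_2(H)$ contains no copy of a $2$-balanced densest subgraph of $H$, and hence no copy of $H$ (for $i\geq 4$ the colour class is empty, so nothing to check).

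For the partition we exploit that colours $2$ and $3$ are ``light''. Since $H_2'$ and $H_3'$ are not forests while every forest has $2$-density at most $1<m_2(H_2)$, any forest is simultaneously $H_2'$-free and $H_3'$-free; and $m(G)\leq m_2(H_1,H_2)=m_2(H_1',H_2')<m_2(H_1')$ by \cref{lem:sandwiching m2} (which applies because $m_2(H_1')>m_2(H_2')$, a consequence of $m_2(H_1)>m_2(H_2)$). Hence it suffices to find two edge-disjoint forests $F_2,F_3\subseteq G$ with $m_2\big(G\setminus(F_2\cup F_3)\big)\leq m(G)$: then $E_2=E(F_2)$, $E_3=E(F_3)$, and $E_1$ the remaining edges do the job. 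Thus \cref{thm:3 colors} reduces to a \emph{two-forest weakening} of \cref{conj:partitioning}, and this is precisely where the hypotheses $r\geq 3$ and $m_2(H_2)=m_2(H_3)$ enter: they provide two ``forest colours'', whereas a single one would force us to prove \cref{conj:partitioning} itself.

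This two-forest weakening follows from the Nash--Williams arboricity theorem. The key estimate is
\[
	\max\Big\{\tfrac{e_J}{v_J-1}\ :\ J\subseteq G,\ v_J\geq 2\Big\}\ \leq\ m(G)+\tfrac12 .
\]
Indeed, if $J$ attains the left-hand side and $g=e_J/(v_J-1)$, then $e_J\leq\binom{v_J}{2}$ forces $v_J\geq 2g$, while $e_J\leq m(G)\,v_J$ gives $g(v_J-1)\leq m(G)\,v_J$; assuming $g>m(G)$ (otherwise the bound is trivial) these combine to $2g\leq v_J\leq g/(g-m(G))$, i.e.\ $g\leq m(G)+\tfrac12$, with equality for cliques. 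By Nash--Williams the arboricity $a(G)$ equals the ceiling of this maximum, so $a(G)\leq m(G)+\tfrac32$. Decompose $E(G)$ into $a(G)$ forests; if $a(G)\leq 2$ take $F_2,F_3$ to be these forests. Otherwise $a(G)\geq 3$, whence $m(G)>\tfrac32$; let $F_2,F_3$ be two of the forests and let $K=G\setminus(F_2\cup F_3)$ be the union of the remaining $a(G)-2$ forests, so that $e_J(K)\leq\big(a(G)-2\big)(v_J-1)$ for every $J\subseteq K$. Writing $d=m(G)$ and fixing $J\subseteq K$ with $v_J\geq 3$: if $v_J\leq 2d-1$ then $\frac{e_J(K)-1}{v_J-2}\leq\frac{\binom{v_J}{2}-1}{v_J-2}=\frac{v_J+1}{2}\leq d$; and if $v_J>2d-1$ then, using $a(G)-2-d\leq-\tfrac12$,
\[
	\big(a(G)-2\big)(v_J-1)-d(v_J-2)=\big(a(G)-2-d\big)(v_J-1)+d\ \leq\ -\tfrac12(v_J-1)+d\ <\ -\tfrac12(2d-2)+d=1,
\]
so again $\frac{e_J(K)-1}{v_J-2}<d$. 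Hence $m_2(K)\leq d=m(G)$, as required.

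The only subtle point is the case split in the last paragraph: the bound ``fractional arboricity $\leq m(G)+\tfrac12$'' is tight exactly for cliques, and this is precisely what makes the clique bound (handling subgraphs with $v_J\leq 2m(G)-1$) and the Nash--Williams bound (handling larger subgraphs) meet after deleting two forests --- with only one forest the Nash--Williams bound is too weak on large subgraphs, which is exactly the difficulty left open in \cref{conj:partitioning}. Everything else is a routine invocation of the machinery behind \cref{thm:prob-lemma} and of the standard passage to strictly balanced subgraphs, so we expect the bulk of the write-up to be in checking those reductions carefully rather than in the combinatorial core above.
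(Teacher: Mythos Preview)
Your approach is correct and follows the paper's overall structure: reduce via the probabilistic machinery to a deterministic colouring statement, then establish the latter by using Nash--Williams to peel off two forests (one for each of colours $2$ and $3$). The reduction step, however, is not quite as immediate as you suggest: the probabilistic lemma (\cref{thm:prob-lemma}, or rather \cref{lem:prob rephrased}) is proved only for \emph{pairs} of families, so one needs to pass from three-colour Ramsey to a two-colour family core. The paper does this explicitly by introducing $3$-colour cores and showing that any $(\{H_1'\},\{H_2'\},\{H_3'\})$-core is also an $(\{H_1'\},\{H_2',H_3'\})$-core; the hypothesis $m_2(H_2)=m_2(H_3)$ is precisely what guarantees $m_2(\{H_2',H_3'\})=m_2(H_2)$, so that the threshold remains $m_2(H_1,H_2)$. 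Your ``the proof applies with an arbitrary number of colours'' glosses over this bridge, though you rightly flag the families result and anticipate that the reduction is routine.

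On the deterministic side the two arguments diverge. The paper takes the shorter route: from $m_1(G)\le m(G)+\tfrac12\le m_2(H_1')+\tfrac12\le m_1(H_1')+1$ it decomposes $G$ into $\lceil m_1(H_1')\rceil+1$ forests, and the union of all but two of them has $m_1<m_1(H_1')$ and is therefore $H_1'$-free. You instead prove the stronger standalone fact that removing two suitable forests from any graph $G$ drops $m_2$ down to at most $m(G)$ --- a genuine two-forest analogue of \cref{conj:partitioning}. Your case split (small $v_J$ handled by the clique bound $\frac{v_J+1}{2}\le m(G)$, large $v_J$ by the forest count together with $a(G)<m(G)+\tfrac32$) is correct, and your observation that two forests succeed where one does not nicely explains why the extra colour helps. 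The paper's route is quicker; yours yields a result of independent interest.
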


We can also prove \cref{conj:KK} in a number of additional cases, expressed in terms of the properties of (a strictly balanced pair of subgraphs of) the pair $(H_1, H_2)$ of two densest graphs.

Recall that the \emph{degeneracy} of $H$ is the maximum over all $J \subseteq H$ of the minimum degree of $J$.

\begin{theorem}\label{thm:main}
  Suppose that $(H_1,H_2)$ is strictly balanced. Suppose additionally that one of the following conditions holds:
  \begin{enumerate}[label=(\alph*)]
	\item\label{mainitem:bipartite} $\chi(H_2) \geq 3$, or
	\item\label{mainitem:m1} $H_2$ is not the union of two forests, or
	\item\label{mainitem:high-chi} $\chi(H_1) > m_2(H_1,H_2)+1$, or
	\item\label{mainitem:degen} $H_1$ has degeneracy at least $\flo{2m_2(H_1,H_2)}$, or
	\item\label{mainitem:biclique} $H_1=K_{s,t}$ for some $s,t\geq 2$, or 
	\item\label{mainitem:matroids} $m_2(H_1)>\cei{m_2(H_1,H_2)}$.
  \end{enumerate}
  In any of these cases, \cref{conj:KK} holds for $(H_1,H_2)$.
\end{theorem}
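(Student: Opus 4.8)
The plan is to reduce everything, in all six cases, to the \emph{deterministic lemma} and then verify it by an explicit colouring. By the (precise form of the) probabilistic lemma \cref{thm:prob-lemma}, together with the known $1$-statement of \cref{conj:KK} \cite{MR4173138} and the resolution of the case $m_2(H_1)=m_2(H_2)$ in \cite{2305.19964}, it suffices to show: writing $\rho\coloneqq m_2(H_1,H_2)$, every graph $G$ with $m(G)\le\rho$ admits a $2$-colouring of $E(G)$ with no copy of $H_1$ in colour $1$ and no copy of $H_2$ in colour $2$. We use throughout that $\rho<m_2(H_1)$ (by \cref{lem:sandwiching m2}) and that $m_2(H_2)>1$, so $H_2$ contains a cycle; also, $H_1$ has a subgraph attaining $m_2(H_1)$, so any graph $K$ with $m_2(K)<m_2(H_1)$ is $H_1$-free, and since degeneracy is subgraph-monotone, any graph of degeneracy less than that of $H_1$ is $H_1$-free. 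Finally, $m(G)\le\rho$ forces $G$ to be $\flo{2\rho}$-degenerate (every subgraph has average degree at most $2\rho$). In each case we fix a degeneracy ordering $v_1,\dots,v_n$ of $G$ and build a colouring in which the colour-$2$ graph $F$ avoids $H_2$ — by being a forest, bipartite, or of arboricity at most $2$ — while the colour-$1$ graph $R\coloneqq G\setminus F$ avoids $H_1$.

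The elementary cases are (a), (c), (d). For (a), $2$-colour the vertices greedily along the ordering, placing each $v_i$ in the colour class minimising its monochromatic earlier neighbours; then every vertex has at most $\flo{\flo{2\rho}/2}=\flo\rho$ monochromatic earlier edges, so $R$ (the monochromatic edges) is $\flo\rho$-degenerate and $m_2(R)\le\flo\rho<m_2(H_1)$, whereas $F$ (the cut) is bipartite and $\chi(H_2)\ge3$, so $H_2\not\subseteq F$. For (d), let $F$ consist of one earlier edge at each vertex that has one: $F$ is a forest, $R$ has degeneracy at most $\flo{2\rho}-1$, and since $H_1$ has degeneracy at least $\flo{2\rho}$ we get $H_1\not\subseteq R$; as $H_2$ has a cycle, $H_2\not\subseteq F$. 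For (c), colour the vertices greedily with $\flo\rho+1$ colours, putting each $v_i$ in a colour occurring at most once among its earlier neighbours (possible because each $v_i$ has at most $\flo{2\rho}<2(\flo\rho+1)$ earlier neighbours, so by pigeonhole some colour is used at most once); then the monochromatic edges form a forest $F$ and $\chi(R)\le\flo\rho+1$, while $\chi(H_1)>\rho+1$ forces $\chi(H_1)\ge\flo\rho+2$, so $H_1\not\subseteq R$, and again $H_2\not\subseteq F$.

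Case (f) is immediate from the matroid-theoretic partitioning result of \cref{sec:matroid-appendix}: applied with the integer $\cei\rho\ge m(G)$ it yields a forest $F\subseteq G$ with $m_2(G\setminus F)\le\cei\rho<m_2(H_1)$, so $R=G\setminus F$ is $H_1$-free and $F$ is $H_2$-free. Cases (b) and (e) are the main obstacle, since there removing a single forest only lowers the degeneracy or arboricity of $G$ by one, which is insufficient when $\rho$ is large. For (b), $H_2$ not being a union of two forests gives $m_2(H_2)>2$, hence $m_2(H_1)>2$; the needed statement is a two-forest analogue of \cref{conj:partitioning} — every graph $G$ has a subgraph $F$ of arboricity at most $2$ with $m_2(G\setminus F)\le m(G)$ — the point being that with two forests the matroid-union argument (which for a single forest seems to require $m(G)\in\N$) gains enough slack to succeed unconditionally; then $m_2(R)\le m(G)\le\rho<m_2(H_1)$ while $H_2\not\subseteq F$. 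For (e), with $H_1=K_{s,t}$ one exploits that a copy of $K_{s,t}$ in $R$ is witnessed by $s$ vertices with $t$ common neighbours, and removes a forest $F$ engineered to destroy every such configuration, using $m(G)\le\rho<m_2(K_{s,t})$ to control how copies of $K_{s,t}$ overlap. The delicate points are, for (b), establishing the two-forest partitioning statement (and the accompanying sandwiching arithmetic relating $m_2(H_2)$, $\rho$, and $m_2(H_1)$), and, for (e), making the destruction of all $K_{s,t}$-configurations by one forest precise across the whole range $\rho<m_2(K_{s,t})$.
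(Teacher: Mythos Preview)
Your handling of cases \ref{mainitem:bipartite}, \ref{mainitem:high-chi}, \ref{mainitem:degen}, and \ref{mainitem:matroids} is correct and essentially matches the paper's arguments (your treatment of \ref{mainitem:degen} via a single backward-edge forest is in fact more direct than the paper's route through $(d,d)$-graphs, but both are easy).

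The genuine gap is in cases \ref{mainitem:m1} and \ref{mainitem:biclique}, which you explicitly flag as ``the main obstacle'' and leave as sketches rather than proofs. For \ref{mainitem:m1} you posit a two-forest analogue of \cref{conj:partitioning} and assert that ``the matroid-union argument \dots\ gains enough slack to succeed unconditionally'', but you do not prove this, and it is not obvious. The paper avoids this entirely: since $m_1(G)\le m(G)+\tfrac12\le m_2(H_1)+\tfrac12\le m_1(H_1)+1$, Nash--Williams partitions $G$ into $t+1$ forests with $t=\lceil m_1(H_1)\rceil$; the union of $t-1$ of them has $m_1<m_1(H_1)$ and is therefore $H_1$-free, and the remaining two forests are $H_2$-free by hypothesis. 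No new partitioning statement is needed.

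For \ref{mainitem:biclique} your description (``removes a forest $F$ engineered to destroy every such configuration'') is not an argument; you do not say how copies of $K_{s,t}$ can be hit by a single forest, and there is no obvious reason this should work globally. The paper's proof is quite different: take a minimal counterexample $G$ to the decomposition into a forest plus an $(s,t)$-avoiding graph, and show by two short local extension arguments (removing a vertex of degree $\le s$, or an edge with both endpoints of degree $\le t$) that $G$ must itself be an $(s+1,t+1)$-graph; then an averaging identity forces $\tfrac{1}{s+1}+\tfrac{1}{t+1}\ge \tfrac{v_G}{e_G}\ge \tfrac{1}{\rho}$, contradicting the easily checked inequality $\tfrac{1}{s+1}+\tfrac{1}{t+1}<\tfrac{1}{m_2(K_{s,t})}\le\tfrac{1}{\rho}$.
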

\begin{remark}
	The only graphs $H_2$ which do not satisfy \ref{mainitem:bipartite} or \ref{mainitem:m1} are sparse bipartite graphs, such as even cycles. 
	On the other hand, 
	\ref{mainitem:high-chi} applies whenever $H_1$ is a clique\footnote{Note that $m_2(H_1,H_2) \leq m_2(H_1)$, hence \ref{mainitem:high-chi} holds if $\chi(H_1)>m_2(H_1)+1$, and cliques satisfy $m_2(K_k)=\frac{k+1}{2}$.} or, more generally, a graph obtained from a clique by deleting few edges.	
	Moreover, 
	\ref{mainitem:degen} applies to reasonably dense graphs, as well as all $d$-regular bipartite graphs with $d \geq 2$, and \ref{mainitem:biclique} handles all cases when $H_1$ is a biclique\footnote{In fact, our proof of \ref{mainitem:biclique} applies to a larger class of graphs, which we call \emph{$(s,t)$-graphs}; see \cref{sec:det proof} for details.}. 
	Thus, very roughly speaking, the strictly balanced cases that remain open in \cref{conj:KK} are those in which $H_2$ is bipartite and very sparse and $H_1$ is not ``too dense''.

	Case \ref{mainitem:matroids} is somewhat stranger and it is not obvious that there exist graphs to which it applies. However, one can check that, for example, it applies if $H_1 = K_{3,3,3,3}$ and $H_2=C_8$, and that none of the other cases of \cref{thm:main} (or any of the earlier results on \cref{conj:KK}) apply in this case. However, the main reason we include \ref{mainitem:matroids} is that it is implied by our partial progress on \cref{conj:partitioning}; since we believe that this conjecture is the correct approach to settling \cref{conj:KK} in its entirety, we wanted to highlight \ref{mainitem:matroids}.

	We remark that, unfortunately, the conditions in \cref{thm:main} do not exhaust all cases.  While it is quite likely that simple additional arguments could resolve further cases, \cref{conj:partitioning} remains the only (conjectural) approach we have found to resolve \cref{conj:KK} in all cases.  Moreover, our proof of the probabilistic lemma implies that, in order to prove \cref{conj:KK} for a pair $(H_1,H_2)$, it is enough to prove the deterministic lemma for graphs $G$ of order not exceeding an explicit constant $K=K(H_1,H_2)$. In particular, the validity of \cref{conj:KK} for any specific pair of graphs reduces to a finite computation.
\end{remark}

\subsection{Ramsey properties of graph families}
All of the results discussed in the previous subsection hold in greater generality, when we replace $H_1,\dots,H_r$ with $r$ finite families of graphs. In addition to being interesting in its own right, such a generalization also has important consequences  in the original setting of \cref{conj:KK}; indeed, our proof of the three-color result, \cref{thm:3 colors}, relies on our ability to work with graph families. Before we state our more general results, we need the following definitions.

\begin{definition}
  Let $\H_1,\dots,\H_r$ be finite families of graphs. We say that a graph $G$ is \emph{Ramsey} for $(\H_1,\dots,\H_r)$ if every $r$-coloring of $E(G)$ contains a monochromatic copy of some $H_i \in \H_i$ in some color $i \in \br{r}$.
\end{definition}

We now define the appropriate generalizations of the notions of maximum $2$-density and mixed $2$-density to families of graphs.  First, given a finite family of graphs $\H$, we let
\[
  m_2(\H) \coloneqq \min_{H \in \H} m_2(H).
\]
Second, given a graph $H$ and a (finite) family $\L$ of graphs, we let
\[
  m_2(H, \L) \coloneqq  \max \left\{ \frac{e_{J}}{v_{J}-2+1/m_2(\L)} : J \subseteq H, v_J \geq 2 \right\}.
\]
Third, given two finite families of graphs $\H$ and $\L$ with $m_2(\H) \geq m_2(\L)$, we define
\[
  m_2(\H,\L) \coloneqq \min_{H \in \H} m_2(H, \L).
\]
Finally, continuing the terminology above, let us say that the pair $(\H,\L)$ is \emph{strictly balanced} if every graph in $\L$ is strictly $2$-balanced and every graph in $\H$ is strictly $m_2(\cdot,\L)$-balanced.

The following conjecture is a natural generalization of \cref{conj:KK} to families of graphs.
\begin{conjecture}[Kohayakawa--Kreuter conjecture for families]\label{conj:families}
	Let $\H_1,\dots,\H_r$ be finite families of graphs with $m_2(\H_1) \geq \dotsb \geq m_2(\H_r)$ and suppose that $m_2(\H_2) >1$. There exist constants $c,C>0$ such that
	\[
		\lim_{n \to \infty}\pr(G_{n,p}\text{ is Ramsey for } (\H_1,\dots,\H_r)) = 
		\begin{cases}
			1&\text{if }p \geq Cn^{-1/m_2(\H_1,\H_2)},\\
			0&\text{if }p \leq cn^{-1/m_2(\H_1,\H_2)}.
		\end{cases}
	\]
\end{conjecture}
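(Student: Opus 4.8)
The plan is to split \cref{conj:families} into its $1$-statement and $0$-statement, reduce each (as far as possible) to the corresponding assertion about tuples of single graphs, and then push the remaining deterministic content as far as current coloring techniques allow; modulo \cref{conj:partitioning} this would yield \cref{conj:families} in full. For the \textbf{$1$-statement} I would invoke the $1$-statement of \cref{conj:KK}, which is known \cite{MR4173138}. Given $\H_1,\dots,\H_r$ as in the statement, pick $H_2^\ast \in \H_2$ with $m_2(H_2^\ast) = m_2(\H_2)$, then $H_1^\ast \in \H_1$ with $m_2(H_1^\ast,H_2^\ast) = m_2(\H_1,\H_2)$, and for $i \geq 3$ any $H_i^\ast \in \H_i$ with $m_2(H_i^\ast) = m_2(\H_i)$. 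Unwinding the definitions shows that within the tuple $(H_1^\ast,\dots,H_r^\ast)$ the two largest maximal $2$-densities are those of $H_1^\ast$ and $H_2^\ast$, so the threshold predicted by \cref{conj:KK} for this tuple is exactly $n^{-1/m_2(H_1^\ast,H_2^\ast)} = n^{-1/m_2(\H_1,\H_2)}$; since every $r$-coloring of $G_{n,p}$ with a monochromatic copy of $H_i^\ast$ in color $i$ in particular contains a monochromatic member of $\H_i$ in color $i$, the $1$-statement for $(\H_1,\dots,\H_r)$ follows.

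For the \textbf{$0$-statement} I would first perform the usual reductions: using only colors $1$ and $2$ reduces to $r = 2$; passing to the subfamilies of densest members and then to densest subgraphs reduces to a \emph{strictly balanced} pair $(\H_1,\H_2)$ with $m_2(\H_2)$ and $m_2(\H_1,\H_2)$ unchanged; and since the case $m_2(\H_1)=m_2(\H_2)$ is covered by adapting \cite{2305.19964}, I would assume $m_2(\H_1) > m_2(\H_2)$. The central step is then a family version of \cref{thm:prob-lemma}, proved by the same argument (the proof of \cref{thm:prob-lemma} being written to accommodate families): the $0$-statement for a strictly balanced pair $(\H_1,\H_2)$ is \emph{equivalent} to the \emph{deterministic lemma}, that every graph $G$ which is Ramsey for $(\H_1,\H_2)$ satisfies $m(G) > m_2(\H_1,\H_2)$. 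It even suffices to verify the deterministic lemma for graphs $G$ of bounded order, so for any fixed $(\H_1,\H_2)$ the conjecture reduces to a finite check.

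It then remains to prove the deterministic lemma for strictly balanced pairs $(\H_1,\H_2)$ with $m_2(\H_1)>m_2(\H_2)$. Running the coloring arguments of \cref{thm:m2-2.5}, \cref{thm:3 colors}, and \cref{thm:main}---all of which transfer to families with essentially no change, the three-color result indeed being proved through families---handles a broad range of cases. The uniform route to \emph{all} cases is via \cref{conj:partitioning}: if $G$ satisfies $m(G) \leq m_2(\H_1,\H_2)$, then, since $m_2(\H_1,\H_2) < m_2(\H_1)$ in the regime $m_2(\H_1)>m_2(\H_2)$ (compare \cref{lem:sandwiching m2}), \cref{conj:partitioning} would give a partition $E(G) = E(F)\cup E(K)$ with $F$ a forest and $m_2(K) \leq m(G) < m_2(\H_1) \leq m_2(H_1)$ for every $H_1 \in \H_1$; coloring $K$ with color $1$ and $F$ with color $2$ yields no monochromatic member of $\H_1$ (each is too dense to fit in $K$) and no monochromatic member of $\H_2$ (each contains a cycle since $m_2(\H_2)>1$, while $F$ does not). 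Hence $G$ is not Ramsey for $(\H_1,\H_2)$, which gives the deterministic lemma and therefore \cref{conj:families}.

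The main obstacle is accordingly \cref{conj:partitioning} itself. I would start from the case in which $m(G)$ is an integer---settled by the matroid-theoretic argument of \cref{sec:matroid-appendix}---and attempt to remove that hypothesis; I expect this to be the real difficulty, as the matroid machinery seems to lean essentially on integrality. The only other non-routine items are checking that the proof of \cref{thm:prob-lemma} and the deterministic-lemma arguments behind \cref{thm:main} genuinely survive the passage from single graphs to families, and here I do not anticipate serious trouble.
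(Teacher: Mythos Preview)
Your proposal is correct and mirrors the paper's approach essentially step for step: the $1$-statement is deduced from \cite{MR4173138} by choosing representatives $H_i^\ast \in \H_i$ achieving the relevant minima, and the $0$-statement is handled by reducing to $r=2$ and a strictly balanced pair, invoking \cite{2305.19964} for the case $m_2(\H_1)=m_2(\H_2)$, and then applying the family probabilistic lemma (\cref{thm:prob}) to reduce to the deterministic lemma, with \cref{conj:partitioning} identified as the remaining obstacle. Since \cref{conj:families} is not proved in full in the paper either, your identification of \cref{conj:partitioning} as the genuine bottleneck, and of the integer case in \cref{sec:matroid-appendix} as the natural starting point, is exactly right.
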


Note that, for any $H_1 \in \H_1,\dots,H_r \in \H_r$, the property of being Ramsey for $(H_1,\dots,H_r)$ implies the property of being Ramsey for $(\H_1,\dots,\H_r)$. Therefore, the $1$-statement of \cref{conj:families} follows from the $1$-statement of \cref{conj:KK}, which we know to be true by the result of Mousset, Nenadov, and Samotij \cite{MR4173138}. 

The $0$-statement of \cref{conj:families} remains open; the only progress to date is due to the first two authors \cite{2305.19964}, who proved \cref{conj:families} whenever $m_2(\H_1)=m_2(\H_2)$. We make further progress on this conjecture: as in the case of single graphs, we prove a probabilistic lemma that reduces the $0$-statement to a deterministic lemma, which is clearly a necessary condition.
\begin{theorem}[Probabilistic lemma for families] \label{thm:prob}
	The $0$-statement of \cref{conj:families} holds if and only if, for every strictly balanced pair $(\H_1,\H_2)$ of finite families of graphs, every graph $G$ that is Ramsey for $(\H_1,\H_2)$ satisfies $m(G) > m_2(\H_1,\H_2)$.
\end{theorem}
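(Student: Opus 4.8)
The statement has two directions, of which the forward one is routine. If the deterministic lemma fails---say, some strictly balanced pair $(\H_1,\H_2)$ admits a graph $G$ that is Ramsey for $(\H_1,\H_2)$ with $m(G)\le m_2(\H_1,\H_2)$---then, taking $r=2$, at $p=cn^{-1/m_2(\H_1,\H_2)}$ we have $p=\Omega(n^{-1/m(G)})$, so $G_{n,p}$ contains a copy of $G$ with probability bounded away from $0$ (see \cite{MR1782847}); any such $G_{n,p}$ is Ramsey for $(\H_1,\H_2)$, contradicting the $0$-statement. It remains to prove that the deterministic lemma is sufficient, which is the content of the probabilistic lemma.

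Assume, then, that the deterministic lemma holds for every strictly balanced pair of finite families. By the same reductions as in the single-graph case, we may take $r=2$ (a $2$-colouring of $E(G_{n,p})$ with no copy of any $H_1\in\H_1$ in colour $1$ and no copy of any $H_2\in\H_2$ in colour $2$ is a fortiori a good $r$-colouring), replace $(\H_1,\H_2)$ by a strictly balanced pair of subfamilies without changing $m_2(\H_1,\H_2)$ (finiteness of the families is used here), and, by the result of the first two authors \cite{2305.19964}, assume $m_2(\H_1)>m_2(\H_2)>1$. Fix $p=cn^{-1/m_2(\H_1,\H_2)}$ with $c>0$ small; we must show that a.a.s.\ $G_{n,p}$ is not Ramsey for $(\H_1,\H_2)$.

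The plan is to exhibit, a.a.s., a partition $E(G_{n,p})=E_0\sqcup E_1\sqcup\dots\sqcup E_t$ with the following properties: (i) no edge of $E_0$ lies in a copy of any $H_1\in\H_1$; (ii) each piece $G_i:=(V(G_{n,p}),E_i)$ has at most $K=K(\H_1,\H_2)$ non-isolated vertices; and (iii) every copy in $G_{n,p}$ of any $H_1\in\H_1$, and every copy of any $H_2\in\H_2$ all of whose edges lie in $E_1\cup\dots\cup E_t$, is contained in a single $G_i$. We take $E_0$ to be the set of edges lying in no copy of any $H_1\in\H_1$, and $E_1,\dots,E_t$ to be the edge-sets of the clusters obtained by joining two copies of $\H_1$-graphs whenever they can be linked by a chain of copies in which consecutive members share an edge; thus the $E_i$ partition $E(G_{n,p})\setminus E_0$, they may share vertices but never edges, and each copy of an $H_1\in\H_1$ lies in exactly one of them. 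For (iii) one also uses that in the present regime every $H_1\in\H_1$ and every $H_2\in\H_2$ is $2$-connected---strict $2$-balance, respectively strict $m_2(\cdot,\H_2)$-balance, forbids pendant edges and cut vertices---so such a copy cannot straddle two clusters, which meet in at most a vertex. Granting (i)--(iii): a one-line first-moment estimate (using $p=cn^{-1/m_2(\H_1,\H_2)}$) shows that a.a.s.\ every subgraph of $G_{n,p}$ on at most $K$ vertices has density at most $m_2(\H_1,\H_2)$, hence $m(G_i)\le m_2(\H_1,\H_2)$ for all $i$ and the deterministic lemma yields, for each $i$, a $2$-colouring $\chi_i$ of $G_i$ with no copy of any $H_1\in\H_1$ in colour $1$ and no copy of any $H_2\in\H_2$ in colour $2$. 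Colouring $E_0$ by colour $1$ and each $E_i$ by $\chi_i$ is then a good $2$-colouring of $G_{n,p}$: every copy of an $H_1\in\H_1$ (which by (i) has all edges in $E_1\cup\dots\cup E_t$), and by (iii) every copy of an $H_2\in\H_2$ appearing in colour $2$, lies inside a single $G_i$ and is broken there.

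The crux---and the step I expect to be hardest---is establishing (ii). This is where strict balance is essential. One runs a first-moment estimate over ``growing sequences'' $C_1,C_2,\dots,C_\ell$ of copies of $\H_1$-graphs in which each $C_{j+1}$ shares an edge with $C_1\cup\dots\cup C_j$: writing $v^{\mathrm{new}}_{j+1}$ and $e^{\mathrm{new}}_{j+1}$ for the numbers of vertices and edges that $C_{j+1}$ contributes, the subgraph of $C_{j+1}$ spanned by the already-present vertices and edges is a proper subgraph of some $H_1\in\H_1$ on at least two vertices, so strict $m_2(\cdot,\H_2)$-balance forces $e^{\mathrm{new}}_{j+1}/v^{\mathrm{new}}_{j+1}>m_2(\H_1,\H_2)$, and---there being finitely many possibilities---bounded away from it. Consequently the expected number of such sequences is at most $n^{O(1)}(c_0 n^{-\delta})^{\ell-1}$ for some constants $c_0,\delta>0$, which is $o(1)$ once $\ell$ is a large enough constant; so a.a.s.\ no cluster contains more than $\ell$ copies, giving (ii) with $K=\ell\cdot\max_{H_1\in\H_1}v_{H_1}$. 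Property (i) is immediate, and (iii) reduces, via the $2$-connectivity remark, to the definition of the clusters. Tracking the constants shows that the deterministic lemma is only ever invoked on graphs with at most $K$ vertices, which is the promised reduction to a finite computation; I do not anticipate the passage from single graphs to finite families to cause difficulty beyond carrying the $\min$-definitions of $m_2(\H)$ and $m_2(\H_1,\H_2)$ through these estimates.
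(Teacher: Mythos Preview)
Your forward direction and your initial reductions (to $r=2$, to a strictly balanced pair, and to $m_2(\H_1)>m_2(\H_2)$ via \cite{2305.19964}) are fine and match the paper. The substantive difficulty lies in the backward direction, and here your approach diverges from the paper's and contains a real gap.

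The gap is in property~(iii). Your clusters $G_1,\dots,G_t$ are formed by linking $\H_1$-copies that share an \emph{edge}; consequently, distinct clusters are edge-disjoint, but nothing prevents them from sharing two or more \emph{vertices}. Your assertion that ``clusters meet in at most a vertex'' is simply false in general. For a concrete picture, take $\H_1=\{K_3\}$: the triangles on $\{a,c,d\}$ and $\{b,c,d\}$ (sharing edge $cd$) form one cluster, while the triangles on $\{a,e,f\}$ and $\{b,e,f\}$ (sharing edge $ef$) form another; the two clusters share the vertices $a$ and $b$ but no edge. If $H_2=C_4$, the $4$-cycle $a\,c\,b\,e\,a$ has all four of its edges inside the clusters yet straddles both of them, so your combined colouring need not avoid a blue $C_4$. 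The $2$-connectivity of $H_2$ does not help here, since a $2$-connected graph can certainly have its edges split among parts that pairwise overlap in single vertices. One might try to coarsen the clustering (merge clusters sharing $\ge 2$ vertices, or merge along $\H_2$-copies), but the first option does not rule out straddling across three or more clusters, and the second destroys your first-moment bound for~(ii): attaching a copy of $L\in\H_2$ along a single edge contributes $(e_L-1)/(v_L-2)=m_2(L)<m_2(\H_1,\H_2)$, so the balance goes the wrong way.

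This is precisely the obstacle the paper's argument is designed to overcome, and it does so by a different route. Rather than partitioning $G_{n,p}$ into $\H_1$-clusters and colouring each, the paper works with the minimal Ramsey subgraph $G\subseteq G_{n,p}$, which supports an $(\H_1,\H_2)$-\emph{core} (\cref{lem:ramsey is core}): a structure in which every edge of every chosen $\H_1$-copy is met by an $\H_2$-copy in exactly that edge, and vice versa. An exploration process on cores then builds $G$ using two kinds of steps: degenerate $\H$-steps (essentially your growing-sequence moves, which do increase the balance by at least some $\delta_1$) and $\L$-steps, where one adds a copy $\cop{L}$ of some $L\in\H_2$ \emph{together with} an $\H_1$-copy $\cop{H_e}$ glued at each new edge $e$ of $\cop{L}$. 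The crucial calculation (\cref{lem:X-vs-alpha} and \cref{claim:delta3}) shows that the gain from the attached $\cop{H_e}$'s exactly offsets the loss from the sparse $\cop{L}$, so the balance never decreases. This is the missing idea in your outline: the $\H_2$-copies have to be woven into the exploration (with their compensating $\H_1$-attachments), not handled post hoc by a combinatorial claim about how clusters intersect.

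Your first-moment sketch for~(ii) is morally the ``only $\H$-steps'' special case of the paper's exploration and is essentially correct for showing $\H_1$-clusters are small; but because~(iii) fails, smallness of the $\H_1$-clusters is not enough to produce a global colouring.
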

As in \cref{thm:m2-2.5,thm:main}, we can prove the deterministic lemma for families in a wide variety of cases, namely when every graph $H_1 \in \H_1$ or every graph $H_2 \in \H_2$ satisfies one of the conditions in \cref{thm:main}. In particular, we resolve \cref{conj:families} in many cases. However, we believe that the right way to resolve \cref{conj:families} in its entirety is the same as the right way to resolve the original Kohayakawa--Kreuter conjecture, \cref{conj:KK}. Namely, if \cref{conj:partitioning} is true, then \cref{conj:families} is true for all families of graphs.

\subsection{Organization}
Most of the rest of this paper is dedicated to proving \cref{thm:prob}, and thus also \cref{thm:prob-lemma}. Our technique is inspired by recent work of the first two authors \cite{2305.19964}, who proved \cref{conj:families} in the case $m_2(\H_1)=m_2(\H_2)$. Therefore, we assume henceforth that $m_2(\H_1)>m_2(\H_2)$. We will now change notation and denote $\H_1=\H$ and $\H_2=\L$. The names stand for \emph{heavy} and \emph{light}, respectively, and are meant to remind the reader that $m_2(\L) < m_2(\H)$.  We also assume henceforth that $(\H,\L)$ is a strictly balanced pair of families.

The rest of this paper is organized as follows. In \cref{sec:outline}, we present a high-level overview of our proof of \cref{thm:prob}. \cref{sec:prelim} contains a number of preliminaries for the proof, including the definitions and basic properties of \emph{cores}---a fundamental notion in our approach---as well as several simple numerical lemmas. The proof of \cref{thm:prob} is carried out in detail in \cref{sec:prob proof}. In \cref{sec:det proof}, we prove the deterministic lemma under various assumptions, which yields \cref{thm:main,thm:m2-2.5} as well as their generalizations to families. We conclude with two appendices: \cref{sec:3 color} proves \cref{thm:3 colors} by explaining what in our proof needs to be adapted to deal with the three-color setting; and \cref{sec:matroid-appendix} presents our partial progress on \cref{conj:partitioning}.

\subsubsection*{Additional note}
As this paper was being written, we learned that very similar results were obtained independently by Bowtell, Hancock, and Hyde \cite{BHH23}, who also resolve \cref{conj:KK} in the vast majority of cases. As with this paper, they first prove a probabilistic lemma, showing that resolving the Kohayakawa--Kreuter conjecture is equivalent to proving a deterministic coloring result. By using a wider array of coloring techniques, they are able to prove more cases of \cref{conj:KK} than we can. Additionally, they consider a natural generalization of the Kohayakawa--Kreuter to uniform hypergraphs (a topic that we chose not to pursue here) and establish its $0$-statement for almost all pairs of hypergraphs; see also \cite{MR3725732} for more on such hypergraph questions.
In contrast, their work does not cover families of graphs, a generalization that falls out naturally from our approach. 

\subsubsection*{Acknowledgments}
We would like to thank Anita Liebenau and Let\'{\i}cia Mattos for fruitful discussions on Ramsey properties of random graphs.
We are also indebted to Candida Bowtell, Robert Hancock, and Joseph Hyde for sharing an early draft of their paper \cite{BHH23} with us, and for their many invaluable comments.

\section{Proof outline}\label{sec:outline}
We now sketch, at a very high level, the proof of the probabilistic lemma. Let us fix a strictly balanced pair of families $(\H,\L)$. We wish to upper-bound the probability that $G_{n,p}$ is Ramsey for $(\H,\L)$, where $p \leq cn^{-1/m_2(\H,\L)}$ for an appropriately chosen constant $c=c(\H,\L)>0$. Our approach is modeled on the recent proof of the $0$-statement of \cref{thm:RR} due to the first two authors \cite{2305.19964}; however, there are substantial additional difficulties that arise in the asymmetric setting.

One can immediately make several simplifying assumptions.  First, if $G_{n,p}$ is Ramsey for $(\H,\L)$, then there exists some $G \subseteq G_{n,p}$ that is \emph{minimally} Ramsey for $(\H,\L)$, in the sense that any proper subgraph $G' \subsetneq G$ is not Ramsey for $(\H,\L)$. It is not hard to show (see \cref{lem:ramsey is core} below) that every minimally Ramsey graph has a number of interesting properties.  In particular, if $G$ is minimally Ramsey, then every edge of $G$ lies in at least one copy of some $H \in \H$, and at least one copy of some $L\in \L$.  Our arguments will exploit a well-known strengthening of this property, which we call \emph{supporting a core}; see \cref{def:core} for the precise definition.

We would ideally like to union-bound over all possible minimally Ramsey graphs $G$ in order to show that a.a.s.\ none of them appears in $G_{n,p}$.  Unfortunately, there are potentially too many minimally Ramsey graphs for this to be possible.  To overcome this, we construct a smaller family $\S$ of subgraphs of $K_n$ such that every Ramsey graph $G$ contains some element of $\S$ as a subgraph.  Since $\S$ is much smaller than the family of minimally Ramsey graphs, we can effectively union-bound over $\S$.
This basic idea also underlies the container method \cite{MR3385638,MR3327533} and the recent work of Harel, Mousset, and Samotij on the upper tail problem for subgraph counts~\cite{MR4484206}.  The details here, however, are slightly subtle; there are actually three different types of graphs in $\S$ and a different union-bound argument is needed to handle each type.

We construct our family $\S$ with the use of an exploration process on minimally Ramsey graphs, each of which supports a core.  This exploration process starts with a fixed edge of $K_n$ and gradually adds to it copies of graphs in $\H \cup \L$.  As long as the subgraph $G' \subseteq G$ of explored edges is not yet all of $G$, we add to $G'$ a copy of some graph in $\H \cup \L$ that intersects $G'$ but is not fully contained in it.  By choosing this copy in a principled manner (more on this momentarily), we can ensure that $\S$ satisfies certain conditions which enable this union-bound argument.

Since our goal is to show that the final graph $G'$ is rather dense (and thus unlikely to appear in $G_{n,p}$), we always prefer to add copies of graphs in $\H$, as these boost the density of $G'$.  If there are no available copies of $H \in \H$, we explore along some $L \in \L$.  As $L$ may be very sparse, this can hurt us; however, the ``core'' property guarantees that each copy of $L$ comes with at least one copy of some $H \in \H$ per new edge. An elementary (but fairly involved) computation shows that the losses and the gains pencil out, which is the key fact showing that $\S$ has the desired properties.

\section{Preliminaries}\label{sec:prelim}
\subsection{Ramsey graphs and cores}

Given a graph $G$, denote by $\F_\H[G],\F_\L[G]$ the set of all copies of members of $\H,\L$, respectively, in $G$. We think of $\F_\H[G],\F_\L[G]$ as hypergraphs on the ground set $E(G)$; in particular, we think of an element of $\F_\H[G],\F_\L[G]$ as a collection of edges of $G$ that form a copy of some $H \in \H,L \in \L$, respectively. To highlight the (important) difference between the members of $\H \cup \L$ and their copies (i.e.\ the elements of $\F_\H[G] \cup \F_\L[G]$), we will denote the former by $H$ and $L$ and the latter by $\cop{H}$ and $\cop{L}$.

Given a graph $G$ and $\F_\H \subseteq \F_\H[G], \F_\L \subseteq \F_\L[G]$, we say that the tuple $(G,\F_\H,\F_\L)$ is \emph{Ramsey} if, for every two-coloring of $E(G)$, there is an element of $\F_\H$ that is monochromatic red or an element of $\F_\L$ that is monochromatic blue. In particular, we see that $G$ is Ramsey for $(\H,\L)$ if and only if $(G,\F_\H[G],\F_\L[G])$ is Ramsey.  Having said that, allowing tuples $(G, \F_\H, \F_\L)$ where $\F_\H$ and $\F_\L$ are proper subsets of $\F_\H[G]$ and $\F_\L[G]$, respectively, enables us to deduce further useful properties.  These are encapsulated in the following definition.

\begin{definition}\label{def:core}
	An \emph{$(\H,\L)$-core} (or \emph{core} for short) is a tuple $(G,\F_\H,\F_\L)$, where $G$ is a graph and $\F_\H \subseteq \F_\H[G], \F_\L \subseteq \F_\L[G]$, with the following properties:
	\begin{itemize}
		
		\item The hypergraph $\F_\H \cup \F_\L$ is connected and spans $E(G)$.
		\item For every $\cop{H} \in \F_\H$ and every edge $e \in \cop{H}$, there exists an $\cop{L} \in \F_\L$ such that $\cop{H} \cap \cop{L} = \{e\}$.
		\item For every $\cop{L} \in \F_\L$ and every edge $e \in \cop{L}$, there exists an $\cop{H} \in \F_\H$ such that $\cop{H} \cap \cop{L} = \{e\}$.
	\end{itemize}
	We say that $G$ \emph{supports a core} if there exist $\F_\H \subseteq \F_\H[G],\F_\L \subseteq \F_\L[G]$ such that $(G,\F_\H,\F_\L)$ is a core.
\end{definition}

The reason we care about cores is that minimal Ramsey graphs support cores, as shown in the following lemma.  Essentially the same lemma appears in the work of R\"odl and Ruci\'nski~\cite{MR1249720}, where it is given as an exercise.  The same idea was already used in several earlier works, including \cite[Claim~6]{MR1609513} and \cite[Lemma~4.1]{MR4597167}.

\begin{lemma}\label{lem:ramsey is core}
  Suppose that a graph $G$ is Ramsey for $(\H,\L)$, but none of its proper subgraphs are Ramsey for $(\H, \L)$.  Then $G$ supports an $(\H,\L)$-core.
\end{lemma}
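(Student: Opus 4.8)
The plan is to start from a graph $G$ that is minimally Ramsey for $(\H,\L)$ and to exhibit the three required sets $\F_\H \subseteq \F_\H[G]$ and $\F_\L \subseteq \F_\L[G]$ that witness the core property. First I would record the two basic consequences of minimality: by deleting any single edge $e$, the resulting graph $G - e$ is not Ramsey, so there is a two-coloring $\chi_e$ of $E(G) \setminus \{e\}$ with no red copy of any $H \in \H$ and no blue copy of any $L \in \L$. Extending $\chi_e$ by coloring $e$ red must create a red copy $\cop{H}_e$ of some $H \in \H$ through $e$, and extending by coloring $e$ blue must create a blue copy $\cop{L}_e$ of some $L \in \L$ through $e$; in both cases $e$ is the \emph{only} edge of the new monochromatic copy whose color differs from $\chi_e$, so $\cop{H}_e$ meets the blue edges of $\chi_e$ in exactly $\{e\}$-complement behaviour — more precisely, every edge of $\cop{H}_e$ other than $e$ is red under $\chi_e$, and every edge of $\cop{L}_e$ other than $e$ is blue under $\chi_e$.

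The key step is to use this to produce, for each edge, a copy of a light graph and a copy of a heavy graph meeting it in a controlled way. Fix an edge $e$ and the coloring $\chi_e$ as above, together with $\cop{H}_e \ni e$. For an arbitrary edge $f \in \cop{H}_e$, I want an $\cop{L} \in \F_\L[G]$ with $\cop{H}_e \cap \cop{L} = \{f\}$. To get this, recolor: take $\chi_e$ but flip $f$ to red as well. If $f = e$ this is the all-red extension, handled above; if $f \neq e$, then $f$ was already red, so we actually look instead at the coloring that agrees with $\chi_e$ except it sets $f$ to blue. Since $\chi_e$ had no blue copy of any $L$, introducing the blue edge $f$ must create a blue copy $\cop{L}_f$ of some $L \in \L$ through $f$, and every other edge of $\cop{L}_f$ is blue under $\chi_e$; in particular no edge of $\cop{L}_f$ other than $f$ is red under $\chi_e$, while every edge of $\cop{H}_e$ other than $e$ \emph{is} red under $\chi_e$. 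Hence $\cop{L}_f \cap \cop{H}_e \subseteq \{e, f\}$, and one argues $e \notin \cop{L}_f$ (as $e$ is red, not blue, under $\chi_e$, unless $e = f$), giving $\cop{L}_f \cap \cop{H}_e = \{f\}$. A symmetric argument starting from $\cop{L}_e$ and flipping an edge to red produces, for every $f' \in \cop{L}_e$, a copy $\cop{H}_{f'} \in \F_\H[G]$ with $\cop{H}_{f'} \cap \cop{L}_e = \{f'\}$.

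Now I would assemble $\F_\H$ and $\F_\L$. Start with the copies $\{\cop{H}_e : e \in E(G)\}$ and $\{\cop{L}_e : e \in E(G)\}$ obtained from the single-edge deletions; these already span $E(G)$, since each $e$ lies in $\cop{H}_e$ and in $\cop{L}_e$. Then close off under the two "witness" requirements: whenever $\cop{H} \in \F_\H$ and $f \in \cop{H}$ lacks a light witness, add the copy $\cop{L}_f$ built above (relative to an appropriate coloring); symmetrically for $\cop{L} \in \F_\L$. Because $G$ is finite, this closure terminates, and by construction the last two bullet points of \cref{def:core} hold. For connectivity of $\F_\H \cup \F_\L$ as a hypergraph on $E(G)$: each time we add a witness $\cop{L}_f$ for some $f \in \cop{H}$, it shares the edge $f$ with an already-present hyperedge, so the hypergraph stays connected throughout, and since it spans $E(G)$ we are done. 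I expect the main obstacle to be bookkeeping the colorings carefully — each witness copy is defined relative to a specific coloring $\chi_e$, and one must make sure that when asking for a light witness of an edge $f$ inside a heavy copy $\cop{H}$ that was itself produced as a witness, there is still an available coloring certifying the required disjointness. The clean way around this is to observe that the witness-extraction argument only ever needs: a copy $\cop{H} \subseteq G$ together with a two-coloring of $E(G)$ in which $\cop{H}$ is red, no $L \in \L$ is blue, and no $H' \in \H$ is red except possibly $\cop{H}$ itself — and such a coloring always exists because $G$ is \emph{minimally} Ramsey (color $\cop{H}$ red and extend a good coloring of $G$ minus one edge of $\cop{H}$). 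Phrasing the lemma's proof around this self-contained statement, rather than chaining constructions, should make the argument go through cleanly.
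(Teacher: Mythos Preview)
Your approach has a genuine gap in the witness-extraction step. When you take $\chi_e$ extended by $e \mapsto \text{red}$ and then flip an edge $f \in \cop{H}_e \setminus \{e\}$ to blue, you assert that a blue copy of some $L \in \L$ must appear. But this follows only if the resulting coloring contains no red copy of any $H \in \H$, and that need not hold: under $\chi_e$ with $e$ red there may be several red $H$-copies through $e$, and flipping $f$ destroys only those that also pass through $f$. Your proposed fix in the final paragraph---that for any copy $\cop{H}$ there is a two-coloring in which $\cop{H}$ is red and no other copy is monochromatic---is not delivered by the edge-deletion argument you sketch: taking a good coloring of $G - e$ for some $e \in \cop{H}$ and then forcing all of $\cop{H}$ to be red may recolor several edges from blue to red, and this can create new red copies of graphs in $\H$.

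The paper sidesteps both this issue and your (also incomplete) connectivity argument with a single device: instead of building $\F_\H, \F_\L$ bottom-up, it passes to an inclusion-minimal pair $(\F_\H, \F_\L)$ for which the tuple $(G, \F_\H, \F_\L)$ is still Ramsey. For any $\cop{H} \in \F_\H$, minimality of $\F_\H$ immediately supplies exactly the coloring you wanted: since $(G, \F_\H \setminus \{\cop{H}\}, \F_\L)$ is not Ramsey, some coloring avoids every blue member of $\F_\L$ and every red member of $\F_\H \setminus \{\cop{H}\}$, and since the original tuple \emph{is} Ramsey, $\cop{H}$ itself must be red in this coloring; flipping any one of its edges then forces a blue witness in $\F_\L$. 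The same minimality also yields connectivity for free (a component of a disconnected $\F_\H \cup \F_\L$ would already induce a Ramsey tuple), whereas your argument only shows that adding witnesses preserves whatever connectivity the initial family $\{\cop{H}_e, \cop{L}_e : e \in E(G)\}$ happens to have, without establishing that this initial family is itself connected.
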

\begin{proof}
  As $G$ is Ramsey for $(\H,\L)$, we know that $(G,\F_\H[G],\F_\L[G])$ is a Ramsey tuple. Let $\F_\H \subseteq \F_\H[G], \F_\L \subseteq \F_\L[G]$ be inclusion-minimal subfamilies such that $(G,\F_\H,\F_\L)$ is still a Ramsey tuple.  In other words, this tuple is Ramsey, but for any $\F_\H' \subseteq \F_\H, \F_\L' \subseteq \F_\L$ such that at least one inclusion is strict, the tuple $(G,\F_\H',\F_\L')$ is not Ramsey.  We will show that $(G, \F_\H, \F_\L)$ is a core.

  If some $e \in E(G)$ is not contained in any edge of $\F_\H \cup \F_\L$, then $(G \setminus e, \F_\H, \F_\L)$ is still Ramsey, and thus $G \setminus e$ is Ramsey for $(\H,\L)$, contradicting the minimality of $G$.  Furthermore, if $\F_\H \cup \F_\L$ is not connected, then at least one of its connected components induces a Ramsey tuple, which contradicts the minimality of $(\F_\H, \F_\L)$.  Thus, the first condition in the definition of a core is satisfied. We now turn to the next two parts of the definition.

  To see that the second condition in the definition of a core is satisfied, fix some $\cop{H} \in \F_\H$ and some $e \in \cop{H}$.  By minimality, we can find a two-coloring of $E(G)$ such that no element of $\F_\L$ is blue and no element of $\F_\H \setminus \{\cop{H}\}$ is red.  Note that all edges of $\cop{H}$ are colored red, as otherwise our coloring would witness $(G, \F_\H, \F_\L)$ being not Ramsey.  Flip the color of $e$ from red to blue.  Since $\cop{H}$ is now no longer monochromatic red, we must have created a monochromatic blue element $\cop{L}$ of $\F_\L$.  As all edges of $\cop{H} \setminus e$ are still red, we see that $\cop{H} \cap \cop{L} = \{e\}$, as required. Interchanging the roles of $\F_\H,\F_\L$, and the colors yields the third condition in the definition of a core.
\end{proof}

\subsection{Numerical lemmas}
In this section, we collect a few useful numerical lemmas, all of which are simple combinatorial facts about vertex- and edge-counts in graphs. We begin with the following well-known result, which we will use throughout.
\begin{lemma}[The mediant inequality]\label{lem:mediant}
  Let $a,c \geq 0$ and $b,d>0$ be real numbers with $a/b \leq c/d$. Then
  \[
    \frac ab \leq \frac{a+c}{b+d} \leq \frac cd.
  \]
  Moreover, if one inequality is strict, then so is the other (which happens if and only if $a/b < c/d$).
\end{lemma}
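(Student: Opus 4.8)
The plan is to reduce everything to the single quantity $bc - ad$. First I would clear denominators: since $b, d > 0$, the hypothesis $a/b \leq c/d$ is equivalent to $ad \leq bc$, i.e.\ $bc - ad \geq 0$. I will use this reformulation throughout, as it avoids any case analysis on signs.

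Next I would simply compute the two differences directly. For the left-hand inequality,
\[
	\frac{a+c}{b+d} - \frac ab = \frac{b(a+c) - a(b+d)}{b(b+d)} = \frac{bc - ad}{b(b+d)},
\]
and for the right-hand inequality,
\[
	\frac cd - \frac{a+c}{b+d} = \frac{c(b+d) - d(a+c)}{d(b+d)} = \frac{bc-ad}{d(b+d)}.
\]
Since $b, d > 0$ we have $b(b+d) > 0$ and $d(b+d) > 0$, so both differences have the same sign as $bc - ad \geq 0$. This gives both desired inequalities at once.

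For the moreover clause, I would just observe that both displayed differences are strictly positive precisely when $bc - ad > 0$, and $bc - ad > 0$ is equivalent to $a/b < c/d$ (again using $b, d > 0$). Hence the left inequality is strict if and only if the right one is, and this happens exactly when $a/b < c/d$. There is no real obstacle here — the only point requiring a moment's care is the strictness bookkeeping, and that is made transparent by the fact that both differences share the common numerator $bc - ad$.
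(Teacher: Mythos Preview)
Your proof is correct and follows essentially the same approach as the paper: both reduce the two inequalities to the single condition $ad \leq bc$ (equivalently $bc - ad \geq 0$), with your version simply writing out the explicit difference computations that the paper leaves as an easy verification.
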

\begin{proof}
  Both inequalities are easily seen to be equivalent to the inequality $ad \leq bc$, which is itself the same as $a/b \leq c/d$.
\end{proof}

\begin{lemma}\label{lem:sandwiching m2}
  Let $(\H,\L)$ be a strictly balanced pair. If $m_2(\L)< m_2(\H)$, then $m_2(\L) < m_2(\H,\L) < m_2(\H)$.
\end{lemma}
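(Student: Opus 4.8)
The plan is to establish the two inequalities $m_2(\L) < m_2(\H,\L)$ and $m_2(\H,\L) < m_2(\H)$ separately, in each case reducing matters to a single application of the mediant inequality (\cref{lem:mediant}). Write $t \coloneqq 1/m_2(\L)$. First I would record a few trivial reductions. Since $(\H,\L)$ is strictly balanced, no graph in $\H \cup \L$ is edgeless; moreover, any $L \in \L$ with $v_L \ge 3$ contains $K_2$ as a proper subgraph, so strict $2$-balancedness gives $m_2(L) > m_2(K_2) = \tfrac12$, whence $m_2(\L) \ge \tfrac12$ and $t \in (0,2]$. Consequently every $H \in \H$ satisfies $m_2(H) \ge m_2(\H) > m_2(\L) \ge \tfrac12$, which forces $v_H \ge 3$. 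The one computation underlying everything is the observation that, for any graph $J$ with $v_J \ge 3$, the quantity $\tfrac{e_J}{v_J-2+t}$ appearing in the definition of $m_2(\cdot,\L)$ is exactly the mediant of $\tfrac{e_J-1}{v_J-2}$ and $\tfrac1t = m_2(\L)$.

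For the upper bound, I would first show $m_2(H,\L) < m_2(H)$ for each fixed $H \in \H$. A subgraph $J \subseteq H$ with $v_J = 2$ has $e_J \le 1$, so its ratio is at most $\tfrac1t = m_2(\L) < m_2(H)$. For $v_J \ge 3$, we have $\tfrac{e_J-1}{v_J-2} \le m_2(H)$ by definition of $m_2(H)$, while $\tfrac1t = m_2(\L) < m_2(H)$; since the mediant of two numbers each at most $m_2(H)$, at least one of them strictly below $m_2(H)$, is itself strictly below $m_2(H)$ (this is \cref{lem:mediant}, applied with the two fractions in whichever order makes the hypothesis hold), we get $\tfrac{e_J}{v_J-2+t} < m_2(H)$. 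Taking the maximum over all $J$ yields $m_2(H,\L) < m_2(H)$. Finally, choosing $H^* \in \H$ with $m_2(H^*) = m_2(\H)$, we conclude $m_2(\H,\L) \le m_2(H^*,\L) < m_2(H^*) = m_2(\H)$.

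For the lower bound, fix $H \in \H$ and choose a subgraph $J_0 \subseteq H$ with $v_{J_0} \ge 3$ attaining $\tfrac{e_{J_0}-1}{v_{J_0}-2} = m_2(H)$ (possible since $v_H \ge 3$). Then $m_2(H,\L) \ge \tfrac{e_{J_0}}{v_{J_0}-2+t}$, which is the mediant of $m_2(\L) = \tfrac1t$ and $m_2(H) = \tfrac{e_{J_0}-1}{v_{J_0}-2}$; as $m_2(\L) < m_2(H)$, the strict case of \cref{lem:mediant} gives $m_2(\L) < \tfrac{e_{J_0}}{v_{J_0}-2+t} \le m_2(H,\L)$. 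Since this holds for every $H \in \H$, we obtain $m_2(\L) < \min_{H \in \H} m_2(H,\L) = m_2(\H,\L)$. (Alternatively, the lower bound is immediate from strict balancedness: $K_2 \subsetneq H$ gives $m_2(\L) = m_2(K_2,\L) < m_2(H,\L)$ for every $H \in \H$.)

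I do not anticipate any genuine obstacle: this is a purely arithmetic statement, and the only points demanding a little care are the degenerate small cases disposed of in the first paragraph and keeping track of which inequality in \cref{lem:mediant} is strict. It is perhaps worth remarking that the full strength of the hypothesis that $(\H,\L)$ is strictly balanced is not needed --- only $m_2(\L) < m_2(\H)$, together with the (easily arranged) fact that the graphs involved are not too small, is actually used.
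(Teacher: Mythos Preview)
Your proof is correct and follows essentially the same approach as the paper: both arguments express the ratio $\tfrac{e_J}{v_J-2+1/m_2(\L)}$ as the mediant of $\tfrac{e_J-1}{v_J-2}$ and $m_2(\L)$ and then apply \cref{lem:mediant}. The only cosmetic difference is in the upper bound, where the paper invokes strict $m_2(\cdot,\L)$-balancedness to identify the maximizer in $m_2(H,\L)$ as $J=H$ and bounds that single ratio, whereas you bound every term in the maximum directly; as you correctly observe, your version therefore does not actually require the strict-balancedness hypothesis for that direction.
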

\begin{proof}
  To see the second inequality, let $H \in \H$ be a graph with $m_2(H) = m_2(\H)$ and observe that the strict $m_2(\cdot, \L)$-balancedness of $H$ implies that
  \[
    m_2(H,\L) = \frac{e_H}{v_H-2+1/m_2(\L)} = \frac{(e_H-1)+1}{(v_H-2) + 1/m_2(\L)} \leq \frac{m_2(H) \cdot (v_H-2) + 1}{(v_H-2)+1/m_2(\L)}.
  \]
  Since $m_2(H) = m_2(\H) > m_2(\L)$, \cref{lem:mediant} implies that $m_2(\H, \L) \leq m_2(H, \L) < m_2(\H)$.

  For the first inequality, let $H \in \H$ be a graph for which $m_2(H,\L) = m_2(\H,\L)$ and let $J \subseteq H$ be its subgraph with $\frac{e_J-1}{v_J-2} = m_2(H)$.  By the strict $m_2(\cdot, \L)$-balancedness of $H$, we have
  \[
    m_2(H, \L) \geq m_2(J, \L) = \frac{(e_J-1)+1}{(v_J-2)+1/m_2(\L)} = \frac{m_2(H) \cdot (v_J-2)+1}{(v_J-2)+1/m_2(\L)}.
  \]
  Since $m_2(H) > m_2(\L)$, \cref{lem:mediant} implies that $m_2(\H,\L) = m_2(H,\L) \geq m_2(J, \L) > m_2(\L)$.
\end{proof}

\begin{lemma}\label{lem:>alpha}
  Let $H \in \H$ be strictly $m_2(\cdot,\L)$-balanced. Then for any $F \subsetneq H$ with $v_F \geq 2$, we have
  \[
    e_H - e_F > m_2(H, \L) \cdot (v_H-v_F) \geq m_2(\H,\L) \cdot (v_H - v_F).
  \]
\end{lemma}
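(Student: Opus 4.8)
The plan is to unwind the definitions and reduce the statement to a one-line mediant-type manipulation, much as in the proof of \cref{lem:sandwiching m2}. Write $\alpha \coloneqq m_2(H,\L)$ and $\beta \coloneqq 1/m_2(\L)$, and note $\beta > 0$, so that $v_J - 2 + \beta > 0$ for every $J$ with $v_J \geq 2$. The first thing I would establish is that the maximum in the definition of $m_2(H,\L)$ is attained at $J = H$, and \emph{only} there: if it were attained at some $J_0 \subsetneq H$ with $v_{J_0} \geq 2$, then $m_2(J_0,\L) \geq e_{J_0}/(v_{J_0}-2+\beta) = \alpha = m_2(H,\L)$, contradicting the strict $m_2(\cdot,\L)$-balancedness of $H$. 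This gives two facts: $e_H = \alpha(v_H - 2 + \beta)$, and $e_F/(v_F-2+\beta) < \alpha$, i.e.\ $e_F < \alpha(v_F - 2 + \beta)$, for every $F \subsetneq H$ with $v_F \geq 2$ (the latter is immediate when $e_F = 0$ and follows from the uniqueness of the maximizer otherwise).

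Given these, the first inequality falls out by subtraction:
\[
  e_H - e_F > \alpha(v_H - 2 + \beta) - \alpha(v_F - 2 + \beta) = \alpha(v_H - v_F) = m_2(H,\L)\cdot(v_H - v_F).
\]
For the second inequality I would observe that $v_H - v_F \geq 0$, since $F$ is a subgraph of $H$, and that $m_2(H,\L) \geq m_2(\H,\L)$, since $m_2(\H,\L)$ is by definition the minimum of $m_2(H',\L)$ over $H' \in \H$ and $H \in \H$; multiplying these yields $m_2(H,\L)(v_H - v_F) \geq m_2(\H,\L)(v_H - v_F)$, closing the chain.

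There is no genuine obstacle here --- this is a routine numerical fact. The only point deserving care is the claim that the maximizer of $m_2(H,\L)$ is exactly $J = H$, which is where the strict $m_2(\cdot,\L)$-balancedness hypothesis enters, and which must be phrased so that the \emph{strict} inequality $e_F < \alpha(v_F - 2 + \beta)$ is available (the non-strict version would only give $e_H - e_F \geq \alpha(v_H - v_F)$). It is also worth noting that the degenerate-looking cases are handled uniformly by the single display above: when $v_F = v_H$ the bound reads $e_H - e_F > 0$, which holds because $F \subsetneq H$ forces $e_F < e_H$; and when $e_F = 0$ the inequality $e_F < \alpha(v_F - 2 + \beta)$ is clear from $\alpha > 0$ and $v_F - 2 + \beta > 0$.
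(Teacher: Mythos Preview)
Your proof is correct and follows essentially the same approach as the paper: both use strict $m_2(\cdot,\L)$-balancedness to establish that $e_H = \alpha(v_H-2+\beta)$ and $e_F < \alpha(v_F-2+\beta)$, and then combine these. The only cosmetic difference is that the paper phrases the combination via the mediant inequality (\cref{lem:mediant}), which forces a separate treatment of the case $v_F = v_H$, whereas your direct subtraction handles all cases uniformly.
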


\begin{proof}
  The second inequality follows from the definition of $m_2(\H,\L)$.
  Since $e_F < e_H$, we may assume that $v_F < v_H$, as otherwise the
  claimed inequality holds vacuously.
  Since $H$ is strictly $m_2(\cdot,\L)$-balanced,
  we have
  \[
    m_2(H,\L) = \frac{e_H}{v_H - 2 + 1/m_2(\L)} = \frac{(e_H-e_F)+e_F}{(v_H-v_F) + (v_F -2 + 1/m_2(\L))}
  \]
  whereas
  \[
    \frac{e_F}{v_F - 2 + 1/m_2(\L)} < m_2(H,\L).
  \]
  Since $v_H > v_F$, we may use \cref{lem:mediant} to conclude that $(e_H -e_F)/(v_H - v_F) > m_2(H,\L)$.
\end{proof}

\begin{lemma}\label{lem:m2L}
  Let $L\in \L$ be strictly 2-balanced. Then for any $J \subsetneq L$ with $e_L \geq 1$, we have
  \[
    e_L-e_J \geq m_2(L) \cdot (v_L-v_J) \geq m_2(\L)\cdot (v_L-v_J).
  \]
  Moreover, the first inequality is strict unless $J = K_2$.
\end{lemma}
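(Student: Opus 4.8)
The plan is to argue exactly as in the two preceding numerical lemmas (\cref{lem:sandwiching m2} and \cref{lem:>alpha}): write the defining ratio of $m_2(L)$ as a mediant of two smaller ratios, one of which is built from the pair $(e_J,v_J)$, and then read off the conclusion from \cref{lem:mediant}. The second inequality needs no work: since $L\in\L$ we have $m_2(\L)=\min_{L'\in\L}m_2(L')\leq m_2(L)$ by definition, and $v_L-v_J\geq 0$ because $J\subseteq L$, so $m_2(L)(v_L-v_J)\geq m_2(\L)(v_L-v_J)$. Everything below concerns the first inequality and the ``moreover'' clause. Throughout I may assume $v_L\geq 3$ --- if $v_L=2$ then $L=K_2$ and $L$ has no proper subgraph containing an edge, so the statement is vacuous --- and I use the hypothesis that $J$ contains at least one edge.

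First I would dispose of the degenerate case $v_J=v_L$. Here $J\subsetneq L$ forces $e_J<e_L$, so $e_L-e_J\geq 1>0=m_2(L)(v_L-v_J)$; the inequality is strict, and $J\neq K_2$ since $v_J=v_L\geq 3$, which is consistent with the ``moreover'' clause. So from now on I assume $v_J<v_L$, hence $v_L-v_J\geq 1$ and also $v_J\geq 2$, so that $v_J-2\geq 0$.

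For $v_J<v_L$, strict $2$-balancedness of $L$ gives $m_2(L)=\frac{e_L-1}{v_L-2}$, and I would split this as
\[
  m_2(L)=\frac{e_L-1}{v_L-2}=\frac{(e_L-e_J)+(e_J-1)}{(v_L-v_J)+(v_J-2)}.
\]
If $v_J=2$, then the assumption $e_J\geq 1$ forces $J=K_2$, so $e_J=1$, the term $\frac{e_J-1}{v_J-2}$ is absent, and directly $\frac{e_L-e_J}{v_L-v_J}=\frac{e_L-1}{v_L-2}=m_2(L)$: this is precisely the equality case. If $v_J\geq 3$, then strict $2$-balancedness gives $\frac{e_J-1}{v_J-2}\leq m_2(J)<m_2(L)$, and the display above exhibits $m_2(L)$ as the mediant of $\frac{e_J-1}{v_J-2}$ and $\frac{e_L-e_J}{v_L-v_J}$; since this mediant is strictly larger than $\frac{e_J-1}{v_J-2}$, \cref{lem:mediant} forces $\frac{e_L-e_J}{v_L-v_J}>m_2(L)$, strictly. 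Combining this with the case $v_J=v_L$ yields $e_L-e_J\geq m_2(L)(v_L-v_J)$ with equality exactly when $J=K_2$.

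There is no genuine obstacle here: the lemma is a routine variation on \cref{lem:>alpha} (with $m_2(\cdot,\L)$ replaced by $m_2(\cdot)$ and $1/m_2(\L)$ by $0$). The only points requiring care are the two degenerate configurations ($v_J=v_L$ and $v_J=2$) and tracking the strictness assertion through the mediant step --- in particular, using the hypothesis that $J$ has an edge to ensure the $v_J=2$ case is exactly $J=K_2$, which is where, and only where, equality occurs.
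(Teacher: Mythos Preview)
Your proposal is correct and takes essentially the same approach as the paper. Both dispose of the boundary cases $v_J=v_L$ and $v_J=2$ directly, and for $3\leq v_J<v_L$ write $m_2(L)=\frac{e_L-1}{v_L-2}$ as the mediant of $\frac{e_J-1}{v_J-2}$ and $\frac{e_L-e_J}{v_L-v_J}$, then invoke \cref{lem:mediant} together with strict $2$-balancedness to obtain the strict inequality.
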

\begin{proof}
	The second inequality is immediate since $m_2(\L) \leq m_2(L)$.
  Since $e_J < e_L$, we may assume that $v_J < v_L$, as otherwise the
  claimed (strict) inequality holds vacuously.
	We clearly have equality if $J = K_2$ and strict inequality if $v_J=2$ and $e_J = 0$, so we may assume henceforth that $v_J >2$.
        Since $L$ is strictly $2$-balanced,
	\[
          m_2(L) = \frac{e_L-1}{v_L-2} = \frac{(e_L-e_J) + (e_J-1)}{(v_L-v_J)+(v_J-2)}
	\]
        whereas $(e_J-1)/(v_J-2) < m_2(L)$. Since $v_J>2$, we may apply \cref{lem:mediant} to conclude the desired result, with a strict inequality.
\end{proof}
\begin{lemma}\label{lem:X-vs-alpha}
  Suppose that $(\H,\L)$ is a strictly balanced pair.
  Defining $\alpha \coloneqq m_2(\H,\L)$ and $X \coloneqq \min_{H \in \H} \{(e_H-1)-\alpha \cdot (v_H-2)\}$, we have that
  \[
	X + (v_K-2)(\alpha-1) \geq e_K \cdot \left( \frac{\alpha}{m_2(L)}-1 \right)
  \]
  for every $L \in \L$ and every non-empty $K \subseteq L$. Moreover, the inequality is strict unless $K=K_2$.
\end{lemma}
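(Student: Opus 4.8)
The plan is to begin by pinning down the value of $X$. Since $(\H,\L)$ is strictly balanced, every $H\in\H$ is strictly $m_2(\cdot,\L)$-balanced, so $m_2(H,\L)=\frac{e_H}{v_H-2+1/m_2(\L)}$ for each $H\in\H$; as $m_2(H,\L)\geq\alpha$ for every $H$, with equality for at least one $H$ (the minimum defining $\alpha$ is attained), this rearranges to $(e_H-1)-\alpha(v_H-2)\geq\frac{\alpha}{m_2(\L)}-1$, again with equality for that $H$. Hence $X=\frac{\alpha}{m_2(\L)}-1$. Moreover, \cref{lem:sandwiching m2} gives $\alpha>m_2(\L)$, so together with the standing assumption $m_2(\L)>1$ we record the two facts we will need: $X=\frac{\alpha}{m_2(\L)}-1$ and $\alpha-1>0$.

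Next I would move everything to one side and split it in two. Setting $D:=X+(v_K-2)(\alpha-1)-e_K\big(\frac{\alpha}{m_2(L)}-1\big)$, one has $D=T_1+T_2$ where $T_1:=X-\frac{\alpha}{m_2(L)}+1$ and $T_2:=(v_K-2)(\alpha-1)-(e_K-1)\big(\frac{\alpha}{m_2(L)}-1\big)$, which is a trivial rearrangement. By the first step, $T_1=\alpha\big(\frac1{m_2(\L)}-\frac1{m_2(L)}\big)\geq 0$, since $m_2(L)\geq m_2(\L)$. So it suffices to prove $T_2\geq0$ and, for the moreover part, that $T_2>0$ whenever $K\neq K_2$.

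To estimate $T_2$, the key input is $e_K-1\leq m_2(L)\,(v_K-2)$, which holds for every $K\subseteq L$ with $v_K\geq 3$ straight from the definition of $m_2(L)$ as a maximum; when $v_K=2$ we must have $K=K_2$ (as $e_K\geq1$), and then both summands of $T_2$ vanish. So assume $K\neq K_2$, i.e.\ $v_K\geq3$, so that $v_K-2\geq1$. Multiplying the desired $T_2>0$ through by $m_2(L)>0$, it remains to show $(e_K-1)(\alpha-m_2(L))<m_2(L)(v_K-2)(\alpha-1)$, and here I would split on the sign of $\alpha-m_2(L)$. If $\alpha\geq m_2(L)$, the bound on $e_K$ gives $(e_K-1)(\alpha-m_2(L))\leq m_2(L)(v_K-2)(\alpha-m_2(L))$, and this is $<m_2(L)(v_K-2)(\alpha-1)$ because $m_2(L)\geq m_2(\L)>1$ and $m_2(L)(v_K-2)>0$. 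If $\alpha<m_2(L)$, the left-hand side is $\leq0$ while the right-hand side is $\geq m_2(L)(\alpha-1)>0$. Either way $T_2>0$, completing the proof.

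I expect the only step carrying real content to be the first one: the identification $X=\frac{\alpha}{m_2(\L)}-1$ rests on the strict $m_2(\cdot,\L)$-balancedness of the graphs in $\H$ (absent that, $m_2(H,\L)$ need not equal $\frac{e_H}{v_H-2+1/m_2(\L)}$ and the clean formula for $X$ is lost). After that, the only mild subtlety is that $\alpha-m_2(L)$ may have either sign, which is precisely why the last step splits into two cases.
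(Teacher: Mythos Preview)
Your proof is correct and follows essentially the same route as the paper's: both derive $X \geq \frac{\alpha}{m_2(\L)}-1$ from the strict $m_2(\cdot,\L)$-balancedness of every $H\in\H$ (you sharpen this to an equality, which is true but not needed), then combine it with $e_K-1\leq m_2(L)(v_K-2)$ and $m_2(L)>1$. The only cosmetic differences are that you organise the computation via the split $D=T_1+T_2$ and handle the sign of $\alpha-m_2(L)$ by a case distinction at the end, whereas the paper disposes of the case $m_2(L)\geq\alpha$ as vacuous at the outset.
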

\begin{proof}
  Without loss of generality, we may assume that $m_2(L) < \alpha$ and that $v_K > 2$, as otherwise the statement holds vacuously (recall from \cref{lem:sandwiching m2} that $\alpha = m_2(\H,\L) > m_2(\L) > 1$).
  Fix some $L \in \L$ and a nonempty $K \subseteq L$.
  Recall that each $H \in \H$ is strictly $m_2(\cdot,\L)$-balanced and satisfies $m_2(H,\L) \geq m_2(\H,\L)=\alpha$. This implies that
	\[
		\frac{e_H}{v_H-2+1/m_2(\L)} \geq \alpha
	\]
	or, equivalently,
	\[
          e_H \geq \alpha \cdot (v_H-2) + \frac{\alpha}{m_2(\L)}.
        \]
        Consequently,
	\[
		X = \min_{H \in \H} \{(e_H -1) - \alpha \cdot (v_H-2) \} \geq \frac{\alpha}{m_2(\L)}-1 \geq \frac{\alpha}{m_2(L)}-1,
	\]
	where the final inequality uses that $m_2(L)\geq m_2(\L)$.
	
        Since $L$ is strictly $2$-balanced and we assumed that $m_2(L) < \alpha$, we have
	\[
		(e_K-1) \cdot \left( \frac{\alpha}{m_2(L)}-1 \right)
		\leq m_2(L) \cdot (v_K-2) \cdot \left( \frac{\alpha}{m_2(L)}-1 \right)
		 = (v_K-2) (\alpha-m_2(L)).
	\]
	Rearranging the above inequality, we obtain
	\begin{align*}
		e_K \cdot \left( \frac{\alpha}{m_2(L)}-1 \right) - (v_K-2)(\alpha-1)
		&\leq (1-m_2(L))(v_K-2) +  \left( \frac{\alpha}{m_2(L)}-1 \right)\\
		&< \frac{\alpha}{m_2(L)}-1 \leq X,
	\end{align*}
        where the penultimate inequality uses the assumption that $v_K > 2$.
\end{proof}

\section{Proof of the probabilistic lemma}\label{sec:prob proof}
In this section, we prove \cref{thm:prob}. We in fact prove the following more precise statement. 
\begin{lemma}[\cref{thm:prob}, rephrased]\label{lem:prob rephrased}
  Let $(\H,\L)$ be a strictly balanced pair of finite families of graphs satisfying $m_2(\H)>m_2(\L)$. There exists a constant $c>0$ such that the following holds. If $p \leq cn^{-1/m_2(\H,\L)}$, then a.a.s.\ every $G \subseteq G_{n,p}$ which supports a core satisfies $m(G) \leq m_2(\H,\L)$.
\end{lemma}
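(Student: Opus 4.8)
The plan is a first-moment argument. Write $\alpha \coloneqq m_2(\H,\L)$; recall from \cref{lem:sandwiching m2} that $m_2(\L) < \alpha < m_2(\H)$, so that every graph in $\H$ is genuinely denser than $\alpha$. By \cref{lem:ramsey is core}, it suffices to show that a.a.s.\ $G_{n,p}$ contains no subgraph $G$ that supports an $(\H,\L)$-core and satisfies $m(G) > \alpha$. We construct a family $\S$ of subgraphs of $K_n$ such that (i)~every core-supporting $G$ with $m(G) > \alpha$ contains a member of $\S$, and (ii)~$\sum_{S \in \S}\pr(S \subseteq G_{n,p}) = o(1)$ once $c$ is small enough; together these give the lemma, as then a.a.s.\ no member of $\S$---hence no such $G$---lies in $G_{n,p}$. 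The members of $\S$ will be the possible outputs of an exploration process run on an $(\H,\L)$-core.

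\emph{The exploration.} Fix a core $(G,\F_\H,\F_\L)$ with $m(G) > \alpha$. As $\F_\H \cup \F_\L$ is connected and spans $E(G)$, we may grow a connected subgraph of $G$, one copy at a time, starting from a single edge. At a generic step, with $G'$ the explored graph, we prefer an \emph{$\H$-move}: if some $\cop H \in \F_\H$ meets $G'$ in an edge but is not contained in $G'$, we adjoin all of $\cop H$. If no $\H$-move is available, we perform a \emph{bundle}: we adjoin some $\cop L \in \F_\L$ meeting $G'$ in an edge but not contained in $G'$, and then, for every new edge $e$ of $\cop L$, invoke the third clause of \cref{def:core} to adjoin a copy $\cop H_e \in \F_\H$ with $\cop H_e \cap \cop L = \{e\}$---each of which has just become a legal $\H$-move. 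A bundle adjoins boundedly many copies, since $\cop L$ has boundedly many edges.

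\emph{Potential, stopping, and counting.} Track the potential $\psi(G') \coloneqq e_{G'} - \alpha(v_{G'}-2)$, equal to $1$ after the first edge. Since $\cop H \cap G'$ is a proper subgraph of $\cop H$ on at least two vertices, \cref{lem:>alpha} shows that every $\H$-move increases $\psi$ by at least a fixed positive constant. A bundle can momentarily decrease $\psi$, as $m_2(L)$ may lie below $\alpha$; but \cref{lem:X-vs-alpha}---precisely the point at which ``the losses and the gains pencil out''---shows that a completed bundle never decreases $\psi$. Hence $\psi$ is nondecreasing along the exploration, read bundle by bundle, while every individual copy changes $v_{G'}$ and $e_{G'}$ by $O(1)$. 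We halt as soon as one of three events occurs, giving the three types of members of $\S$: $\psi(G')$ reaches a suitable constant threshold $M = M(\alpha)$; or the exploration ``closes up'' (no legal move remains), which, as one readily checks, forces $G' = G$; or a degenerate configuration arises within a bundle. For the first type, $\psi^\ast(G') \coloneqq e_{G'} - \alpha v_{G'} = \psi(G') - 2\alpha$ is a positive constant once $M > 2\alpha$; for the other two, one extracts---using $m(G) > \alpha$ together with a densest subgraph of the now fully revealed $G$---a subgraph with $\psi^\ast$ bounded below by a positive constant. Organising the exploration so that each bundle attaches along a bounded subgraph of the previously built structure, a terminal graph with $e$ edges has one of only $C^{O(e)}$ isomorphism types, rather than the $e^{O(e)}$ of a crude count. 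Since $\pr(S \subseteq G_{n,p}) = p^{e_S}$ and $p = cn^{-1/\alpha}$, grouping terminal graphs by their number of edges shows that the contribution of each of the three types to the first moment is at most $n^{-\Omega(1)}$ provided $c$ is small enough; so the total is $o(1)$.

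\emph{The main obstacle.} The technical heart is the bundle analysis. An $\L$-copy is typically far too sparse to help, and only the core's guarantee of one ``transversal'' $\H$-copy per new edge of $\cop L$ keeps the potential from falling; even then, one needs the inequality of \cref{lem:X-vs-alpha} to be sharp enough to survive the worst case of a bundle attached to $G'$ along a single edge. A second, genuine difficulty is to arrange the stopping rules and the family $\S$ so that both demands of the first-moment argument hold at once: the terminal graphs must be dense enough---$\psi^\ast$ bounded away from $0$---to beat the $n^{v_S}$ embedding factor, yet structured enough to admit the $C^{O(e)}$ count. This is most delicate exactly when the exploration exhausts $G$ before $\psi$ has grown large, where one must still locate within $G$ a dense, describable subgraph.
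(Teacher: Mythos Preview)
Your architecture is right---first-moment via an exploration, potential $\psi = e - \alpha(v-2)$, $\H$-moves strictly increasing it and bundles weakly so---but the stopping rules and the handling of the ``closed-up'' case contain a genuine gap.

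The problem is your type~2. When the exploration closes up with $G' = G$, all you know is $\psi(G) \geq 1$, i.e.\ $e_G \geq \alpha(v_G-2)+1$; you do \emph{not} get $e_G - \alpha v_G$ bounded below by a positive constant, and $G$ may be arbitrarily large. Your fix---``extract a densest subgraph $J \subseteq G$ with $e_J > \alpha v_J$''---fails, because $J$ is not produced by the exploration and therefore is not one of your $C^{O(e)}$ describable terminal graphs; the count does not apply to it. Concretely, summing $(\Lambda n)^k p^{\alpha(k-2)+1}$ over all $k$ gives a divergent factor of $n^{2-1/\alpha}$, which is not $o(1)$. The paper resolves this with two additional ideas that your proposal lacks. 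First, a $\log n$ stopping rule: halt as soon as $v_{G'} \geq \log n$, so that for ``large'' terminal graphs the weak bound $e_S \geq \alpha(v_S-2)$ is already enough, since $(\Lambda c^\alpha)^{\log n} \cdot n^2 = o(1)$. Second, and more importantly, a \emph{pristine boundary} argument: each pristine bundle introduces fresh vertices in the attached $\cop{H_e}$'s that are not yet touched by any other copy; for the exploration to close up, every such vertex must later be revisited, and only degenerate steps can do so. Hence, if fewer than $\Gamma$ degenerate steps have occurred, the number of pristine steps is bounded by a constant, and therefore $v_G \leq K$ for an absolute constant $K$. With bounded size, $m(G) > \alpha$ alone suffices (finitely many isomorphism types, each unlikely). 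Your type~3, ``a degenerate configuration arises within a bundle,'' does not substitute for this: one such event only bumps $\psi$ by $\delta$, so $\psi^\ast$ is still negative, and $G$ is certainly not ``fully revealed'' at that moment.

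A secondary point: the count $C^{O(e)}$ for terminal graphs is not automatic. A pristine bundle attaches at some root edge of $G'$, and there are up to $e_{G'}$ choices for it, which a priori gives $e^{O(e)}$. The paper avoids this by always rooting the next pristine bundle at the \emph{earliest-arriving} eligible edge, so the sequence of roots is monotone and the number of such sequences is $\binom{e_S+k}{k} = C^{O(e)}$. Your proposal gestures at ``organising the exploration'' but does not supply this monotonicity trick.
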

Note that this immediately implies the difficult direction in \cref{thm:prob}. Indeed, suppose that the $0$-statement of \ref{conj:families} fails for some tuple $(\H_1, \dotsc, \H_r)$, i.e., the random graph $G_{n,p}$ is Ramsey for $(\H_1, \dotsc, \H_r)$ with probability bounded away from zero when $p = cn^{-1/m_2(\H_1,\H_2)}$, for an arbitrarily small constant $c>0$.  In particular, with probability bounded away from zero, $G_{n,p}$ contains a graph that is also Ramsey for any pair $(\H,\L)$ of families of subgraphs of $(\H_1,\H_2)$.  For an appropriately chosen pair $(\H,\L)$, \cref{lem:ramsey is core} implies that some subgraph $G \subseteq G_{n,p}$ supports an $(\H,\L)$-core.  By the assumed assertion of \cref{lem:prob rephrased}, a.a.s.\ any such $G \subseteq G_{n,p}$ satisfies $m(G) \leq m_2(\H,\L)$.  However, by the deterministic lemma (i.e.\ the assumption of \cref{thm:prob}), we know that no such $G$ can be Ramsey for $(\H,\L)$, a contradiction.

Our proof of \cref{lem:prob rephrased} follows closely the proof of the probabilistic lemma in recent work of the first two authors \cite{2305.19964}. Fix a strictly balanced pair $(\H,\L)$ of families satisfying $m_2(\H)>m_2(\L)$, and let $\alpha \coloneqq m_2(\H,\L)$. 
Let $\Gbad$ denote the set of graphs $G \subseteq K_n$ which support a core and satisfy $m(G) > m_2(\H,\L)$. The key lemma, which implies \cref{lem:prob rephrased}, is as follows. 

\begin{lemma}\label{lem:exists-S}
  There exist constants $\Lambda,K>0$ and a collection $\S$ of subgraphs of $K_n$ satisfying the following properties:
  \begin{enumerate}[label=(\alph*)]
  \item
    \label{item:prob-a} Every element of $\Gbad$ contains some $S \in \S$ as a subgraph.
  \item
    \label{item:prob-balance} Every $S \in \S$ satisfies at least one of the following three conditions:
    \begin{enumerate}[label=(\roman*)]
    \item \label{item:large} $v_S \geq \log n$ and $e_S \geq \alpha \cdot (v_S-2)$;
    \item\label{item:many degen} $v_S < \log n$ and $e_S \geq \alpha \cdot v_S + 1$;
    \item\label{item:found core} $v_S \leq K$ and $m(S) > \alpha$.
    \end{enumerate} 
    
  \item
    \label{item:prob-count} For every $k \in \br n$, there are at most $(\Lambda n)^k$ graphs  $S \in \S$ with $v_S = k$.
  \end{enumerate}
\end{lemma}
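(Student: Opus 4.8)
I would construct the family $\S$ explicitly, by running a non-deterministic \emph{exploration process} on an arbitrary graph $G \in \Gbad$ together with an $(\H,\L)$-core $(G,\F_\H,\F_\L)$ that it supports (which exists by the very definition of $\Gbad$). The process produces a nested sequence $G_0 \subseteq G_1 \subseteq \dotsb$ of subgraphs of $G$, starting from a single seed edge $e_0$---I let $e_0$ range over all edges of $K_n$---and monitors
\[
  \Phi(G') \coloneqq e_{G'} - \alpha \cdot (v_{G'} - 2), \qquad \text{noting that } \Phi(G_0) = 1 .
\]
I halt the first time that one of three things occurs, corresponding to the three alternatives in~\ref{item:prob-balance}: (i)~$v_{G_i} \geq \log n$; (ii)~$\Phi(G_i) \geq 1 + 2\alpha$, equivalently $e_{G_i} \geq \alpha v_{G_i}+1$; or (iii)~the process has exposed, on at most $K$ vertices, a subgraph of density greater than $\alpha$. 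Then $\S$ is the collection of all graphs that can be produced this way, over all $G$, all seeds, and all choices made along the way.

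\textbf{The process.} At a step with $G_i \subsetneq G$, the process first attempts an \emph{$\H$-step}: if some $\cop{H} \in \F_\H$ shares at least one edge with $G_i$ but is not contained in $G_i$, we pick such an $\cop{H}$ and set $G_{i+1} \coloneqq G_i \cup \cop{H}$; by \cref{lem:>alpha} this raises $\Phi$ by at least a constant $\delta = \delta(\H,\L)>0$. If no $\H$-step is possible, then, since $\F_\H \cup \F_\L$ is connected and spans $E(G)$, there is a copy $\cop{L} \in \F_\L$ meeting $G_i$ in an edge but not contained in it; we perform an \emph{$\L$-step}, adjoining $\cop{L}$ together with, for every edge $e$ of $\cop{L}$ outside $G_i$, a copy $\cop{H}_e \in \F_\H$ with $\cop{H}_e \cap \cop{L} = \{e\}$, as guaranteed by \cref{def:core}. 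The decisive observation is that, since no $\H$-step was available, none of these $\cop{H}_e$ can share an edge with $G_i$ (else $\cop{H}_e$ would itself have been an available $\H$-step); hence at the moment it is adjoined, each $\cop{H}_e$ meets the graph built so far in (essentially) the single edge $e$, and \cref{lem:>alpha} shows it contributes at least $X \coloneqq \min_{H \in \H}\{(e_H-1)-\alpha\cdot(v_H-2)\}$ to $\Phi$, where $X \geq \alpha/m_2(\L) - 1 > 0$ by \cref{lem:sandwiching m2}.

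\textbf{Verifying the three properties.} Property~\ref{item:prob-a} is immediate: for $G \in \Gbad$ we run the process from an edge of $G$, it never leaves $G$, so $G$ contains its output. For property~\ref{item:prob-balance} the crux is the invariant $\Phi(G_i) \geq 0$; since $\H$-steps only increase $\Phi$, the work lies in an $\L$-step, where adjoining $\cop{L}$ (sharing some $K \subseteq L$ with $G_i$) changes $\Phi$ by $(e_L - e_K) - \alpha(v_L - v_K)$, possibly negatively, but comes bundled with $e_L - e_K$ copies $\cop{H}_e$ each worth at least $X$. Combining \cref{lem:m2L} (bounding $e_L - e_K$ below via $m_2(L) \geq m_2(\L)$) with \cref{lem:X-vs-alpha} (precisely the arithmetic statement that $X$ absorbs the deficit of a sparse subgraph of $L$), one verifies that an $\L$-step changes $\Phi$ nonnegatively. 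Granting the invariant, halting by~(i) gives a graph of type~\ref{item:large}, by~(ii) with $v_{G_i}<\log n$ one of type~\ref{item:many degen}, and by~(iii) one of type~\ref{item:found core}. It remains to see that the process always halts in one of these states: if instead it ran to completion without triggering any halting condition, then $G = G_t$ would have fewer than $\log n$ vertices and $\Phi(G) < 1+2\alpha$, and a short argument using \cref{lem:>alpha} and the $\L$-step computation bounds the number of steps---hence $v_G$---by a constant, placing us in case~\ref{item:found core} (with a suitable $K=K(\H,\L)$) because $m(G) > \alpha$; the one delicate point here is the treatment of $\L$-steps that leave $\Phi$ unchanged, which turn out to expose a bounded density-$>\alpha$ configuration. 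Finally, for property~\ref{item:prob-count} one encodes each $S \in \S$ by its construction history---the seed edge, and for each step the adjoined graph, its attachment pattern, and the labels of its new vertices in $[n]$; since every step adds an edge and, by the halting rule, $e_{G_i} < \alpha v_{G_i}+1$ throughout, the number of steps is $O(v_S)$, and a careful accounting of these choices (in the spirit of the container method) yields at most $(\Lambda n)^k$ graphs with $v_S = k$, for a suitable $\Lambda = \Lambda(\H,\L)$.

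\textbf{Main obstacle.} I expect the hardest part to be the $\L$-step accounting inside property~\ref{item:prob-balance}: pinning down exactly how the copies $\cop{H}_e$ supplied by the core structure alongside each sparse copy $\cop{L}$ compensate for $\cop{L}$'s low density---the ``losses and gains pencil out'' computation alluded to in \cref{sec:outline}, where \cref{lem:X-vs-alpha} does the heavy lifting---together with the closely related task of showing that in every borderline situation (when this compensation is tight, so $\Phi$ stalls, or when the process cannot proceed further) the exposed graph really does contain a subgraph of density exceeding $\alpha$ on boundedly many vertices, so that alternative~\ref{item:found core} is genuinely available. Making the encoding behind property~\ref{item:prob-count} robust against steps that add edges but no new vertices is a further, more routine, complication.
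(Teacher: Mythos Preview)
Your overall architecture matches the paper's: an exploration process that alternates $\H$-steps with $\L$-steps (the latter bundled with their core-witnessing $\cop{H}_e$'s), a potential $\Phi$ that never decreases, and a trichotomy of halting states. The invariant $\Phi \geq 0$ and its verification via \cref{lem:>alpha,lem:m2L,lem:X-vs-alpha} are exactly right. However, two genuine ideas from the paper are missing from your sketch, and without them the argument does not close.

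\emph{Termination in case~\ref{item:found core}.} You note correctly that $\H$-steps raise $\Phi$ by a fixed $\delta>0$, so there are $O(1)$ of them before~(ii) triggers; the difficulty is that $\L$-steps may leave $\Phi$ unchanged, so nothing in your setup bounds their number. Your sentence ``$\L$-steps that leave $\Phi$ unchanged \dots\ turn out to expose a bounded density-$>\alpha$ configuration'' is not what happens: a single such step exposes nothing dense. The paper resolves this by splitting $\L$-steps into \emph{pristine} (the copy $\cop{L}$ is regular and the attached $\cop{H}_e$'s are pairwise vertex-disjoint outside $\cop{L}$) and \emph{degenerate} (everything else), proving that degenerate $\L$-steps also raise $\Phi$ by a fixed amount, and then running a separate \emph{pristine-boundary} argument: the set of ``fresh'' vertices introduced by pristine steps but not yet touched again grows by at least one at every pristine step, shrinks by at most a constant at every degenerate step, and---crucially, using the core property once more---must be empty when the process exhausts $G$. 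This forces the number of pristine steps to be bounded by a constant times the number of degenerate steps, hence $v_G \leq K$.

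\emph{The count~\ref{item:prob-count}.} Your non-deterministic process (``over all choices made along the way'') is too generous: with $O(k)$ $\L$-steps, each requiring the specification of an attachment edge inside the current $G_i$ of size $\Theta(k)$, the naive encoding gives $k^{O(k)}$ rather than $(\Lambda n)^k$. The paper makes the process deterministic and, for pristine steps, always roots at the edge that \emph{arrived earliest}; this forces the sequence of root-arrival times to be non-decreasing, so the entire sequence of roots can be encoded by a single multiset of size $\leq k$ from $\br{e_S}$, i.e.\ $\binom{e_S+k}{k} = O(1)^k$ choices. The (constantly many) degenerate steps are then encoded separately at polynomial cost. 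Without the pristine/degenerate split and the earliest-root rule, this counting trick is unavailable.
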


Before we prove \cref{lem:exists-S}, let us see why it implies \cref{lem:prob rephrased}.
\begin{proof}[Proof of \cref{lem:prob rephrased}]
	Recall that $p \leq cn^{-1/\alpha}$, for a small constant $c=c(\H,\L)$ to be chosen later.
	We wish to prove that a.a.s.\ $G_{n,p}$ contains no element of $\Gbad$. By \cref{lem:exists-S}\ref{item:prob-a}, it suffices to prove that a.a.s.\ $G_{n,p}$ contains no element of $\S$. By \ref{item:prob-balance}, the elements of $\S$ are of three types, each of which we deal with separately. First, recall that for any fixed graph $S$ with $m(S) > \alpha$, we have that $\pr(S \subseteq G_{n,p})=o(1)$ (see e.g.\ \cite[Theorem 3.4]{MR1782847}). As there are only a constant number of graphs on at most $K$ vertices, we may apply the union bound and conclude that a.a.s.\ no graph $S$ satisfying $v_S \leq K$ and $m(S) > \alpha$ appears in $G_{n,p}$. This deals with the elements of $\S$ corresponding to case \ref{item:found core}.

	Let $\S' \subseteq \S$ be the set of $S \in \S$ which lie in cases \ref{item:large} or \ref{item:many degen}. We have that
	\begin{align*}
		\pr(S \subseteq G_{n,p} \text{ for some }S \in \S') &\leq \sum_{S \in \S'} p^{e_S}\\
		&\leq \sum_{k=1}^{\cei{\log n}-1} (\Lambda n)^k p^{\alpha k + 1} + \sum_{k=\cei{\log n}}^\infty (\Lambda n)^k p^{\alpha(k-2)}\\
		&\leq p\sum_{k=1}^{\infty} (\Lambda c^\alpha)^k  + c^{-2\alpha}n^2 \sum_{k=\cei{\log n}}^\infty (\Lambda c^\alpha)^k
	\end{align*}
	We now choose $c$ so that $\Lambda c^\alpha =e^{-3}$. Then the first sum above can be bounded by $p$, which tends to $0$ as $n \to \infty$. The second term can be bounded by $2c^{-2\alpha} n^{-1}$, which also tends to $0$ as $n \to \infty$. All in all, we find that a.a.s.\ $G_{n,p}$ does not contain any graph in $\S$, as claimed.
\end{proof}

\subsection{The exploration process and the proof of Lemma \ref{lem:exists-S}}
In this section, we prove \cref{lem:exists-S}.
We will construct the family $\S$ by considering an exploration process on the set $\G$ of graphs $G \subseteq K_n$ which support a core.  For each such $G \in \G$, let us arbitrarily choose collections $\F_\H \subseteq \F_\H[G]$ and $\F_\L \subseteq \F_\L[G]$ such that $(G,\F_\H,\F_\L)$ is a core.  From now on, by copies of graphs from $\H,\L$ in $G$, we mean only those copies that belong to the families $\F_\H, \F_\L$, respectively.  This subtlety will be extremely important in parts of the analysis.

We first fix arbitrary orderings on the graphs in $\H$ and $\L$.
Additionally, we fix a labeling of the vertices of $K_n$, which induces an ordering of all subgraphs according to the lexicographic order. Together with the ordering on $\H,\L$, we obtain a lexicographic ordering on all copies in $K_n$ of graphs in $\H,\L$.   Now, given a $G \in \G$, we build a sequence $G_0 \subsetneq G_1 \subsetneq \dotsb \subseteq G$ as follows.  We start with $G_0$ being the graph comprising only the smallest edge of $G$.  As long as $G_i \neq G$, do the following: Since $G \neq G_i$ and $(G, \F_\H, \F_\L)$ is a core, there must be some copy of a graph from $\H \cup \L$ which belongs to $\F_\H \cup \F_\L$ that intersects $G_i$ but is not fully contained in $G_i$.  Call such an \emph{overlapping} copy \emph{regular} if it intersects $G_i$ in exactly one edge, called its \emph{root}; otherwise, call the copy \emph{degenerate}.  
We form $G_{i+1}$ from $G_i$ as follows:
\begin{enumerate}
\item
  Suppose first that there is an overlapping copy of some graph in $\H$.
  We form $G_{i+1}$ by adding to $G_i$ the smallest (according to the lexicographic order) such copy.
  We call $G_i \to G_{i+1}$ a \emph{degenerate $\H$-step}.
\item
  Otherwise, there must be an overlapping copy $\cop{L}$ of some $L\in \L$.  Note that, for every edge $e \in \cop{L} \setminus G_i$, there must be a copy of some $H \in \H$ that meets $\cop{L}$ only at $e$, as $(G, \F_\H, \F_\L)$ is a core.  Note further that this copy of $H$ does not intersect $G_i$, as otherwise we would perform a degenerate $\H$-step.  We pick the smallest such copy for every $e \in \cop{L} \setminus G_i$, and call it $\cop{H_e}$ (note that the graphs $H_e \in \H$ such that $H_e \cong \cop{H_e}$ may be different for different choices of $e$).  We say that $\cop{L}$ is \emph{pristine} if it is regular and the graphs $\{\cop{H_e}\}_{e \in \cop{L} \setminus G_i}$ are all vertex-disjoint (apart from the intersections that they are forced to have in $V(\cop{L})$).
  \begin{enumerate}[label=(\theenumi.\arabic*)]
  \item
    If there is a pristine copy of some graph in $\L$, we pick the smallest one in the following sense:
    First, among all edges of $G_i$ that are roots of a pristine copy of some graph in $\L$, we choose the one that arrived to $G_i$ earliest.  Second, among all pristine copies that are rooted at this edge, we pick the smallest (according to the lexicographic order).
    We then form $G_{i+1}$ by adding to $G_i$ this smallest copy $\cop{L}$ as well as all $\cop{H_e}$ where $e \in \cop{L} \setminus G_i$.
    We call $G_i \to G_{i+1}$ a \emph{pristine step}.
  \item
    If there are no pristine copies of any graph in $\L$, we pick the smallest (according to the lexicographic order) overlapping copy $\cop{L}$ of a graph in $\L$ and we still form $G_{i+1}$ by adding to $G_i$ the union of $\cop{L}$ and all its $\cop{H_e}$ with $e \in \cop{L} \setminus G_i$.
    We call $G_i \to G_{i+1}$ a \emph{degenerate $\L$-step}.
  \end{enumerate}
\end{enumerate}
We define the \emph{balance} of $G_i$ to be
\[
  b(G_i) \coloneqq e_{G_i} - \alpha \cdot v_{G_i},
\]
where we recall that $\alpha = m_2(\H,\L)$. The key result we will need in order to prove~\ref{item:prob-balance} is the following lemma. We remark that a similar result was proved by Hyde~\cite[Claims~6.2 and~6.3]{MR4565396}; it plays an integral role in his approach to the Kohayakawa--Kreuter conjecture.

\begin{lemma}\label{lem:increase-balance}
  For every $i$, we have that $b(G_{i+1}) \geq b(G_i)$. Moreover,
  there exists some $\delta=\delta(\H,\L)>0$ such that $b(G_{i+1}) \geq b(G_i)+\delta$ if $G_{i+1}$ was obtained from $G_i$ by a degenerate step.
\end{lemma}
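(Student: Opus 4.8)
The plan is to analyze each of the four step types separately, tracking how many new vertices and new edges are added, and to show that in every case the ratio of new edges to new vertices is at least $\alpha$ (with strict slack bounded away from $0$ in the degenerate cases). Since $b(G_{i+1}) - b(G_i) = (e_{G_{i+1}} - e_{G_i}) - \alpha\,(v_{G_{i+1}} - v_{G_i})$, this is exactly what we need. I would introduce notation: write $\Delta v = v_{G_{i+1}} - v_{G_i}$ and $\Delta e = e_{G_{i+1}} - e_{G_i}$, so the claim is $\Delta e \geq \alpha\,\Delta v$, strictly so (by a uniform $\delta$) in the degenerate $\H$- and degenerate $\L$-cases.

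First, the \emph{degenerate $\H$-step}: here $G_{i+1} = G_i \cup \cop H$ for some copy $\cop H$ of an $H \in \H$ that overlaps $G_i$ in a subgraph $F = \cop H \cap G_i$ with either $v_F \geq 2$ and $e_F \geq 2$, or $v_F \geq 3$ — in any case $F \subsetneq H$ is a proper subgraph with $v_F \geq 2$ but $F$ is not a single edge (since the copy is not regular). Then $\Delta v = v_H - v_F$ and $\Delta e = e_H - e_F$, and \cref{lem:>alpha} gives $e_H - e_F > m_2(H,\L)\cdot(v_H - v_F) \geq \alpha\cdot(v_H-v_F)$. The gap $e_H - e_F - \alpha(v_H - v_F)$ is a positive quantity depending only on the pair $(H,F)$, of which there are finitely many across all $H \in \H$, so it is bounded below by some $\delta_1 = \delta_1(\H,\L) > 0$. (One should double-check the boundary case $v_F = v_H$, but then $e_F < e_H$ forces $\Delta e \geq 1 > 0 = \alpha \Delta v$; and the case $e_F \leq 1$ with $v_F = 2$ is precisely a regular copy and hence excluded.)

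Next, the \emph{pristine step} and the \emph{degenerate $\L$-step}. In both we add a copy $\cop L$ of some $L \in \L$ together with, for each new edge $e \in \cop L \setminus G_i$, a copy $\cop{H_e}$ of some $H_e \in \H$ meeting $\cop L$ exactly at $e$ and disjoint from $G_i$. Let $K = \cop L \cap G_i \subseteq L$; then the $\cop L$-part contributes $v_L - v_K$ new vertices and $e_L - e_K$ new edges. For each new edge $e$ of $\cop L$, the copy $\cop{H_e}$ contributes (in the pristine case, by vertex-disjointness of the $\cop{H_e}$'s off $V(\cop L)$) exactly $v_{H_e} - 2$ further new vertices and $e_{H_e} - 1$ further new edges; in the degenerate $\L$-case the $\cop{H_e}$'s may share vertices, so they contribute \emph{at most} $\sum (v_{H_e}-2)$ new vertices and (counting each new edge at least once) \emph{at least} $\sum(e_{H_e}-1)$ new edges — both estimates pushing $\Delta e - \alpha \Delta v$ in the favorable direction. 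Thus, writing $a = v_L - v_K$, it suffices to show
\[
	(e_L - e_K) + \sum_{e \in \cop L \setminus G_i}(e_{H_e}-1) \;\geq\; \alpha\Bigl((v_L - v_K) + \sum_{e \in \cop L \setminus G_i}(v_{H_e}-2)\Bigr).
\]
Recalling $X = \min_{H\in\H}\{(e_H-1)-\alpha(v_H-2)\}$, each summand on the left minus $\alpha$ times each summand on the right is at least $X$, and there are $e_L - e_K$ such terms, so the left side minus the right is at least $(e_L - e_K)\cdot X + (e_L - e_K) - \alpha(v_L - v_K) = (e_L - e_K)(X+1) - \alpha(v_L - v_K)$... more cleanly, I would invoke \cref{lem:X-vs-alpha} directly: it states $X + (v_K-2)(\alpha-1) \geq e_K(\alpha/m_2(L) - 1)$ for every $L\in\L$ and nonempty $K\subseteq L$, with strictness unless $K = K_2$. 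Combining this with the strict $2$-balancedness of $L$ (via \cref{lem:m2L}, which controls $e_L - e_K$ versus $v_L - v_K$) yields $\Delta e - \alpha\Delta v \geq 0$, and the finitely many pairs $(L,K)$ again give a uniform $\delta_2>0$ of slack whenever $K$ is not a single edge — which is exactly the degenerate $\L$-case, since a pristine step has $K = K_2$. Setting $\delta = \min(\delta_1,\delta_2)$ finishes the proof.

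The main obstacle I anticipate is the bookkeeping in the pristine/degenerate-$\L$ cases: one must be careful that in the degenerate-$\L$ case the overcounting of vertices and undercounting of edges genuinely only helps, and one must correctly handle the possibility that $K$ (the overlap of $\cop L$ with $G_i$) is larger than a single edge even when some of the individual $\cop{H_e}$ happen to attach in a ``regular'' way. In other words, the delicate point is disentangling the two independent sources of degeneracy — the copy $\cop L$ overlapping $G_i$ in more than an edge, versus the auxiliary copies $\cop{H_e}$ overlapping each other — and checking that \cref{lem:X-vs-alpha} (whose statement already packages the single-edge-overlap slack for the $\cop{H_e}$'s) combines correctly with \cref{lem:m2L} (which packages the slack coming from $\cop L$ overlapping $G_i$ in a non-edge). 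Once the right inequality is set up, everything reduces to the numerical lemmas already proved in \cref{sec:prelim}.
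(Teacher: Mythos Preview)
Your treatment of the degenerate $\H$-step and the pristine step is essentially correct and matches the paper's argument (modulo a harmless terminological slip: in the exploration process \emph{every} $\H$-step is called a ``degenerate $\H$-step'', even when the overlap $F$ is a single edge; \cref{lem:>alpha} still gives a uniform gap in that case).

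The genuine gap is in the degenerate $\L$-step when the copies $\cop{H_e}$ share \emph{edges} with one another. You write that the $\cop{H_e}$'s contribute ``at least $\sum(e_{H_e}-1)$ new edges'', and that this estimate pushes $\Delta e - \alpha\,\Delta v$ in the favorable direction. This is backwards: if two of the $\cop{H_e}$'s share an edge, that edge is counted twice in $\sum(e_{H_e}-1)$ but appears only once in $G_{i+1}$, so the sum is an \emph{upper} bound on the number of new edges, not a lower bound. Losing edges hurts the balance, and there is no a~priori reason the accompanying vertex savings compensate. Your final paragraph flags exactly this bookkeeping as the ``main obstacle'', but the proposal does not resolve it.

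The paper handles edge-sharing by introducing an auxiliary graph $\I$ on the set $\cop L \setminus G_i$, where $e,f$ are adjacent iff $\cop{H_e}$ and $\cop{H_f}$ share an edge, and then analyzing each connected component $K_j$ of $\I$ separately. Within a component, one orders the $\cop{H_e}$'s so that each new one overlaps the union of the previous ones in a nonempty subgraph, and applies \cref{lem:>alpha} at each step; this yields $b(U_j) \geq X - 2\alpha - (v_{K_j}-2)$, with an extra $\delta$-boost whenever $K_j$ has more than two vertices. It is to these components $K_j$ (subgraphs of the \emph{new} part of $\cop L$), not to the overlap $\cop L \cap G_i$, that \cref{lem:X-vs-alpha} is applied; summing over $j$ and combining with \cref{lem:m2L} then gives the result. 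Your direct approach via $K = \cop L \cap G_i$ works cleanly only when the $\cop{H_e}$'s are pairwise edge-disjoint, which is not guaranteed in a degenerate $\L$-step.
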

As the proof of \cref{lem:increase-balance} is somewhat technical, we defer it to \cref{sec:increase}.
For the moment, we assume the result and continue the discussion of how we construct the family $\S$.
We now let $\Gamma \coloneqq \cei{2\alpha/\delta}$, where $\delta$ is the constant from \cref{lem:increase-balance}.
For $G \in \G$, let
\[
  \tau(G) \coloneqq \min\{i :  v_{G_i} \geq \log n \text{ or } G_i = G \text{ or }G_{i-1} \to G_i\text{ is the $\Gamma$th degenerate step}\}
\]
and let
\begin{equation}\label{eq:S-def}
  \S \coloneqq \{G_{\tau(G)} : G \in \Gbad\}.
\end{equation}

Having defined the family $\S$, we are ready to prove \cref{lem:exists-S}. Since the definition of $\S$ clearly guarantees property \ref{item:prob-a}, it remains to establish properties \ref{item:prob-balance} and \ref{item:prob-count}.
We begin by showing that, if $K$ is sufficiently large (depending only on $\H$ and $\L$), then \ref{item:prob-balance} holds.
\begin{proof}[Proof of \cref{lem:exists-S}\ref{item:prob-balance}]
  Let $\delta$ be the constant from \cref{lem:increase-balance}, let $M \coloneqq \max \{e_L \cdot v_H: H \in \H, L \in \L\}$, and let $K \coloneqq 2M^2 \Gamma$; note that each of these parameters depends only on $\H$ and~$\L$.

  Every $S \in \S$ is of the form $G_{\tau(G)}$ for some $G \in \Gbad$.
  We split into cases depending on which of the three conditions defining $\tau(G)$ caused us to stop the exploration.
  Suppose first that we stopped the exploration because $v_S \geq \log n$.
  By \cref{lem:increase-balance}, we have that
  \[
    e_S - \alpha \cdot v_S= b(S) = b(G_{\tau(G)}) \geq b(G_0) = 1-2\alpha,
  \]
  and therefore $e_S \geq \alpha \cdot (v_S-2)$. This yields case \ref{item:large}.

  Next, suppose we stopped the exploration because step $G_{\tau(G)-1} \to G_{\tau(G)}$ was the $\Gamma$th degenerate step.  As we are not in the previous case, we may assume that $v_S <\log n$. By \cref{lem:increase-balance} and our choice of $\Gamma$, we have that
  \[
    e_S - \alpha \cdot v_S =b(S) = b(G_{\tau(G)}) \geq b(G_0) + \Gamma \delta \geq 1-2\alpha + 2\alpha = 1.
  \]
  Rearranging, we see that $e_S \geq \alpha \cdot v_S + 1$, yielding case \ref{item:many degen}.
  
  The remaining case is when we stop because $S = G \in \Gbad$.  Since the definition of $\Gbad$ implies that $m(G) > \alpha$, in order to establish \ref{item:found core}, we only need to show that $v_G \leq K$.
  For this proof, we need to keep track of another parameter during the exploration process, which we term the \emph{pristine boundary}.  Recall that at every pristine step, we add to $G_i$ a copy $\cop{L}$ of some $L \in \L$ that intersects $G_i$ in a single edge (the root), and then add copies $\cop{H_e}$ of graphs $H_e \in \H$, one for every edge of $\cop{L}$ apart from the root.  Let us say that the \emph{boundary} of this step is the set of all newly added vertices that are not in $\cop{L}$, that is, the set $V(G_{i+1}) \setminus (V(G_i) \cup V(\cop{L})) = (\bigcup_{e \in \cop{L} \setminus G_i} V(\cop{H_e})) \setminus V(\cop{L})$.  Note that the size of the boundary is equal to
  \[
    Y_i \coloneqq \sum_{e \in \cop{L} \setminus G_i} (v_{H_e}-2);
  \]
  indeed, by the definition of pristine steps, the copies $\cop{H_e}$ are vertex-disjoint outside of $V(\cop L)$.
  
  We claim that $Y_i \geq 3$.  To see this, note first that $L$ has at least three edges, as it is not a forest. Similarly, each $H_e$ has at least three vertices.  Putting these together, we see that there are at least two terms in the sum, and every term in the sum is at least one.  Thus, $Y_i \geq 3$ unless $e_L=3$ and $v_{H_e}=3$ for all $e$.  But in this case, $L = K_3 = H_e \in \H$ for all $e$, which means that $\cop{L}$ should have been added to $G_i$ as a degenerate $\H$-step.

  We now inductively define the pristine boundary $\partial G_i$ of $G_i$ as follows.  We set $\partial G_0\coloneqq \varnothing$. 
  If $G_i \to G_{i+1}$ is a pristine step, then we delete from $\partial G_i$ the two endpoints of the root and add to $\partial G_i$ the boundary of this pristine step.  Note that $\ab{\partial G_{i+1}} \geq \ab{\partial G_i} + Y_i - 2 \geq \ab{\partial G_i} + 1$. On the other hand,
  if $G_i \to G_{i+1}$ is a degenerate step, then we only remove vertices from $\partial G_i$, without adding any new vertices. Namely, we remove from $\partial G_i$ all the vertices which are included in the newly added graphs. In other words, if we performed a degenerate $\H$-step by adding a copy $\cop{H}$ of some graph in $\H$, we set $\partial G_{i+1} \coloneqq \partial G_i \setminus V(\cop{H})$.  Similarly, if we performed a degenerate $\L$-step by adding a copy $\cop{L}$ of some graph in $\L$ along with the graphs $\cop{H_e}$ for all $e \in \cop{L} \setminus G_i$, we set $\partial G_{i+1} \coloneqq \partial G_i \setminus (V(\cop{L}) \cup \bigcup_e V(\cop{H_e}))$.  Note that in either case $\ab{\partial G_{i+1}} \geq \ab{\partial G_i} - M$, as the union of all graphs added in each degenerate step can have at most $M$ vertices.

  We now argue that $\partial G_{\tau(G)}=\varnothing$.  Indeed,
  suppose we had some vertex $v \in \partial G_{\tau(G)}$.  By
  definition, $v$ was added during a pristine step, as a vertex of a
  copy $\cop{H_e}$ of some graph $H_e \in \H$, and was never touched again.
  Observe that $v$ is incident to some edge $uv$ of $\cop{H_e}$ that was not touched by any later step of the exploration.
  However, as $(G, \F_\H, \F_\L)$ is a core and $\cop{H_e} \in \F_\H$, there must be some $\cop{L_{uv}} \in \F_\L$ that intersects $\cop{H_e}$ only at $uv$.  Moreover, as $\cop{L_{uv}}$ has minimum degree at least two (by the strict $2$-balancedness assumption), there is some edge $vw \in \cop{L_{uv}} \setminus uv$ that is incident to $v$.  Since we assumed that $G_{\tau(G)} = G$, the edge $vw$ must have been added at some point, a contradiction to the assumption that $v$ was never touched again.

  Finally, since $\ab{\partial G_i}$ increases by at least one during every pristine step and decreases by at most $M$ during each of the at most $\Gamma$ degenerate steps,  in order to achieve $\partial G_{\tau(G)}=\varnothing$, there can be at most $M\Gamma$ pristine steps.  In particular, the total number of exploration steps is at most $M\Gamma + \Gamma$.  As each exploration step adds at most $M$ vertices to $G_i$, we conclude that $v_G \leq M(M\Gamma + \Gamma)+2 \leq K$.  This completes the proof of \ref{item:found core}.
\end{proof}

\begin{proof}[Proof of \cref{lem:exists-S}\ref{item:prob-count}]
  Suppose $S$ has $k$ vertices and let $G \in \Gbad$ be such that $S = G_{\tau(G)}$.  We consider the exploration process on $G$.  Note that in every step we add an overlapping copy of a graph from a finite family $\F$ that comprises all graphs in $\H$ (for the cases where we made a degenerate $\H$-step) and graphs in $\L$ that have graphs from $\H$ glued on subsets of their edges, with all intersection patterns (for the pristine and degenerate $\L$-steps). Let $\F^\times$ denote the graphs in $\F$ that correspond to a pristine step.

  Now, every degenerate step can be described by specifying the graph $F \in \F$ whose copy $\cop{F}$ we are adding, the subgraph $F' \subseteq F$ and the embedding $\varphi \colon V(F') \to V(G_i)$ that describe the intersection $\cop{F} \cap G_i$, and the sequence of $v_F - v_{F'}$ vertices of $K_n$ that complete $\varphi$ to an embedding of $F$ into $K_n$.  Every pristine step is uniquely described by the root edge in $G_i$, the graph $F \in \F^\times$, the edge of $F$ corresponding to the root, and the (ordered sequence of) $v_F - 2$ vertices of $K_n$ that complete the root edge to a copy of $F$ in $K_n$.
There are at most $n^k$ ways to choose the sequence of vertices that were added through this exploration process, in the order that they are introduced to $G$. Each pristine step adds at least one new vertex, so there are at most $k$ pristine steps. Furthermore, there are always at most $\Gamma$ degenerate steps, meaning that $\tau(G) \leq k + \Gamma$. In particular, there are at most $(k + \Gamma) \cdot 2^{k + \Gamma}$ ways to choose $\tau(G)$ and to specify which steps were pristine.

For every degenerate step, there are at most \[\sum_{F \in \F} \sum_{\ell=2}^{v_F} \binom{v_F}{\ell} k^\ell \leq \ab{\F} \cdot (k+1)^{M_v}\] ways of choosing $F \in \F$ and describing the intersection of its copy $\cop{F}$ with $G_i$ (the set $V(F') \subseteq V(F)$ and the embedding $\varphi$ above), where $M_v \coloneqq \max \{v_F \colon F \in \F \}$. As for the pristine steps, note that, in the course of our exploration, the sequence of the arrival times of the roots to $G_{\tau(G)}$ must be non-decreasing.  This is because as soon as an edge appears in some $G_i$, every pristine step that includes it as a root at any later step is already available, and we always choose the one rooted at the edge that arrived to $G$ the earliest. Therefore, there are at most $\binom{e_S + k}{k}$ possible sequences of root edges, since this is the number of non-decreasing sequences of length $k$ in $\br{e_S}$. To supplement this bound, remember that every step increases the number of edges in $G_i$ by at most $M_e \coloneqq \max \{e_F : F \in \F \}$, which means that \[e_S \leq 1 + \tau(G)\cdot M_e \leq 1 + (k + \Gamma) \cdot M_e.\] To summarize, the number of $S \in \S$ with $k$ vertices is at most \[n^k \cdot (k+ \Gamma) \cdot 2^{k + \Gamma} \cdot \left( \ab{\F} \cdot (k+1)^{M_v}\right)^\Gamma \cdot \binom{(k+\Gamma) \cdot M_e + k + 1}{k} \cdot \left(\ab{\F} \cdot M_e \right)^k.\] 
Every term in this product, apart from the first, is bounded by an exponential function of $k$, since $\Gamma, \ab \F, M_v$, and $M_e$ are all constants. Therefore, if we choose $\Lambda = \Lambda(\H,\L)$ sufficiently large, we find that the number of $S \in \S$ with $v_S=k$ is at most $(\Lambda n)^k$, as claimed.
\end{proof}

\subsection{Proof of Lemma \ref{lem:increase-balance}}\label{sec:increase}
In this section, we prove \cref{lem:increase-balance}. The proof is
divided into a number of claims. 
Recall \cref{lem:>alpha}, which asserts that 
\[
	e_H-e_F > m_2(\H,\L) \cdot (v_H-v_F) = \alpha \cdot (v_H - v_F)
\]
for all $H \in \H$ and all $F \subsetneq H$. This implies that we can choose some $\delta_1=\delta_1(\H,\L)>0$ so that
\begin{equation}\label{eq:delta1}
	e_H-e_F \geq \alpha \cdot(v_H-v_F) + \delta_1
\end{equation}
for all $H \in \H$ and all $F \subsetneq H$; we henceforth fix such a $\delta_1>0$.

Our first claim deals with the (easy)
case that $G_i \to G_{i+1}$ is a degenerate $\H$-step.
\begin{claim}\label{claim:delta1}
  If $G_i \to G_{i+1}$ is a degenerate $\H$-step, then $b(G_{i+1}) \geq
  b(G_i) + \delta_1$.
\end{claim}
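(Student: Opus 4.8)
The plan is to unwind the definition of a degenerate $\H$-step and apply the density bound \eqref{eq:delta1} directly. Recall that in a degenerate $\H$-step we form $G_{i+1}$ from $G_i$ by adding the (lexicographically smallest) overlapping copy $\cop H$ of some graph $H \in \H$ that meets $G_i$ but is not contained in it. Write $F \coloneqq \cop H \cap G_i$, viewed as a subgraph of $H$ under the corresponding embedding. Because $\cop H$ overlaps $G_i$ and is not fully contained in it, $F$ is a proper subgraph of $H$ with $v_F \geq 2$ (it contains at least one vertex, and in fact at least one edge or at least two vertices; in the degenerate case the intersection has at least one edge, but for the numerical bound $v_F \geq 2$ is what we need — and if $v_F \leq 1$ then $e_F = 0$ and \eqref{eq:delta1} still applies since $e_H > \alpha(v_H - 1) \geq \alpha(v_H - v_F) + \delta_1$ after possibly shrinking $\delta_1$, so this edge case is harmless).

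First I would observe that the number of new vertices and new edges added in this step are exactly
\[
  v_{G_{i+1}} - v_{G_i} = v_H - v_F, \qquad e_{G_{i+1}} - e_{G_i} = e_H - e_F,
\]
since the copy $\cop H$ shares precisely the subgraph $F$ with $G_i$. Then I would compute directly from the definition of the balance:
\[
  b(G_{i+1}) - b(G_i) = (e_{G_{i+1}} - e_{G_i}) - \alpha \cdot (v_{G_{i+1}} - v_{G_i}) = (e_H - e_F) - \alpha \cdot (v_H - v_F).
\]
By \eqref{eq:delta1}, applied to $H \in \H$ and $F \subsetneq H$, the right-hand side is at least $\delta_1$, which gives $b(G_{i+1}) \geq b(G_i) + \delta_1$ as required.

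There is essentially no obstacle here: the only point requiring a moment's care is the degenerate edge case $v_F \leq 1$, which I would handle by noting that $\delta_1$ can be chosen small enough to also satisfy $e_H \geq \alpha(v_H - 1) + \delta_1$ and $e_H \geq \alpha v_H + \delta_1$ (the latter following from \cref{lem:>alpha} with $F$ the empty graph, or more simply from $m_2(H,\L) \geq \alpha$ and strict $m_2(\cdot,\L)$-balancedness, which forces $e_H > \alpha(v_H - 2)$ with room to spare); alternatively, one simply notes that in a degenerate $\H$-step the intersection contains at least one edge by definition, hence $v_F \geq 2$ automatically, and \eqref{eq:delta1} applies verbatim. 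Either way the claim follows immediately, and this claim is the easy warm-up before the genuinely technical pristine and degenerate $\L$-step cases.
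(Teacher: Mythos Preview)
Your proof is correct and follows essentially the same approach as the paper: define $F$ as the intersection of $\cop H$ with $G_i$, compute $b(G_{i+1}) - b(G_i) = (e_H - e_F) - \alpha(v_H - v_F)$, and apply \eqref{eq:delta1}. Your digression about the case $v_F \leq 1$ is unnecessary, since overlapping copies in the exploration process are required to share at least one edge with $G_i$ (the hypergraph $\F_\H \cup \F_\L$ lives on $E(G)$), so $v_F \geq 2$ holds automatically; the paper simply states \eqref{eq:delta1} for all $F \subsetneq H$ and moves on.
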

\begin{proof}
  Suppose we add to $G_i$ a copy of some $H \in \H$ that intersects
  $G_i$ on a subgraph $F \subseteq H$. This means that 
  \[
    e_{G_{i+1}} = e_{G_i} + (e_H - e_F) \qquad \text{ and } \qquad v_{G_{i+1}} = v_{G_i} + (v_H - v_F)
  \]
  and thus
  \[
    b(G_{i+1}) - b(G_i) = (e_H - e_F) - \alpha\cdot (v_H - v_F) \geq \delta_1,
  \]
  where the inequality follows from \eqref{eq:delta1}, as
  $F$ must be a proper subgraph of $H$.
\end{proof}

Now, suppose that $G_i \to G_{i+1}$ is an $\L$-step, either degenerate
or pristine, which means that we add a copy $\cop{L}$ of some $L
\in \L$ and then add, for every edge $e \in \cop{L} \setminus G_i$, a
copy $\cop{H_e}$ of some $H_e \in \H$.
Let $G_i' \coloneqq G_i \cup \cop{L}$ and let $\cop{J} \coloneqq G_i
\cap \cop{L}$, so that $\cop{J} \cong J$ for some $J \subsetneq L$
with at least one edge. Note that
\begin{equation}
  \label{eq:bal-Gi'-Gi}
  b(G_i') - b(G_i) = (e_L - e_J) - \alpha \cdot (v_L - v_J),
\end{equation}
as we add $e_L -e_J$ edges and $v_L - v_J$ vertices to $G_i$ when forming $G_i'$.

In order to analyze $b(G_{i+1}) - b(G_i')$, we now define an auxiliary graph $\I$ as follows.  Its vertices are the
edges of $\cop{L} \setminus \cop{J}$.  Recall that, for every such
edge $e$, the graph $\cop{H_e} \cong H_e$ intersects $G_i'$ only in
the edge $e$.  A pair $e, f$ of edges of $\cop{L} \setminus \cop{J}$
will be adjacent in $\I$ if and only if their corresponding graphs
$\cop{H_e}$ and $\cop{H_f}$ share at least one edge (equivalently, the graphs $\cop{H_e} \setminus e$ and $\cop{H_f} \setminus f$ share an edge).

Denote the connected components of $\I$ by $K_1, \dotsc, K_m$ and note
that each of them corresponds to a subgraph of $\cop{L} \setminus
\cop{J}$.  For each $j \in \br{m}$, let
\[
  U_j \coloneqq \bigcup_{e \in K_j} (\cop{H_e} \setminus e).
\]
Note that the graphs $G_i'$ and $U_1, \dotsc, U_m$ are pairwise
edge-disjoint and that each $U_j$ shares at least
$v_{K_j}$ vertices (the endpoints of all the edges of $K_j$) with
$G_i'$.  It follows that
\begin{equation}
  \label{eq:H-steps-after-L}
  b(G_{i+1}) - b(G_i') \geq \sum_{j=1}^m (e_{U_j} - \alpha \cdot
  (v_{U_j} - v_{K_j})) = \sum_{j=1}^m (b(U_j) + \alpha \cdot v_{K_j}).
\end{equation}
Finally, as in the statement of \cref{lem:X-vs-alpha}, define
\[
  X \coloneqq \min \{(e_H-1)-\alpha \cdot (v_H-2) : H \in \H\}.
\]
The following claim lies at the heart of the matter.

\begin{claim}\label{claim:component}
  For every $j \in \br{m}$, we have
  \[
    b(U_j) \geq X - 2\alpha - (v_{K_j}-2) + \min\{\delta_1,1\} \cdot \mathbf{1}_{v_{K_j}>2}.
  \]
\end{claim}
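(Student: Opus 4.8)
The plan is to analyze the subgraph $U_j = \bigcup_{e \in K_j}(\cop{H_e}\setminus e)$ by peeling off the copies $\cop{H_e}\setminus e$ one at a time, following a spanning-tree order of the connected component $K_j$ of the auxiliary graph $\I$. Concretely, fix a spanning tree $T$ of $K_j$ and order its edges $e_1,\dots,e_{v_{K_j}}$ (the vertices of $K_j$ are edges of $\cop L$, so $K_j$ has $v_{K_j}$ vertices) so that each $e_t$ with $t\geq 2$ is adjacent in $T$ to some earlier $e_s$; the remaining edges of $K_j$ not in $T$ can be ignored for the lower bound since adding them only creates more overlap. Build up $U_j$ as $W_1 \subsetneq W_2 \subsetneq \dots \subsetneq W_{v_{K_j}} = U_j'$ where $W_1 = \cop{H_{e_1}}\setminus e_1$ and $W_{t} = W_{t-1}\cup(\cop{H_{e_t}}\setminus e_t)$, and $U_j'\subseteq U_j$ is the union over the tree edges only.

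For the base case, $W_1 = \cop{H_{e_1}}\setminus e_1$ has $v_{H_{e_1}}$ vertices and $e_{H_{e_1}}-1$ edges, so $b(W_1) = (e_{H_{e_1}}-1) - \alpha v_{H_{e_1}} = \big((e_{H_{e_1}}-1) - \alpha(v_{H_{e_1}}-2)\big) - 2\alpha \geq X - 2\alpha$, using the definition of $X$. For the inductive step, when we add $\cop{H_{e_t}}\setminus e_t$ to $W_{t-1}$: because $e_t$ is $\I$-adjacent in $T$ to an earlier edge $e_s$, the graphs $\cop{H_{e_t}}\setminus e_t$ and $\cop{H_{e_s}}\setminus e_s$ share at least one edge, hence at least two vertices. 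Thus the subgraph $F$ of $\cop{H_{e_t}}$ along which $\cop{H_{e_t}}\setminus e_t$ (equivalently $\cop{H_{e_t}}$, once we remember the endpoints of $e_t$ sit in $\cop L$) meets $W_{t-1}$ has $v_F \geq 2$; in fact I would argue $v_F \geq 3$, since $F$ contains two endpoints of $e_t$ plus an additional shared edge from the overlap with $\cop{H_{e_s}}$, and if $e_t$'s endpoints were the only shared vertices the overlapping edge would coincide with $e_t$, which is impossible as $e_t\notin \cop{H_{e_s}}$. So $v_F\geq 3$ and $e_F \geq 1$. Then by \cref{lem:>alpha} applied to $H_{e_t}$ and the proper subgraph $F$ (with $v_F\geq 2$), we get $e_{H_{e_t}} - e_F > \alpha(v_{H_{e_t}} - v_F)$, and in fact $e_{H_{e_t}} - e_F \geq \alpha(v_{H_{e_t}}-v_F) + \delta_1$ by \eqref{eq:delta1}. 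Since passing from $W_{t-1}$ to $W_t$ adds exactly $v_{H_{e_t}} - v_F$ vertices and $(e_{H_{e_t}}-1) - (e_F - 1) = e_{H_{e_t}} - e_F$ edges (we remove the edge $e_t$, but $e_t$ was never in $W_{t-1}$ nor is it added, and $F$ also does not contain $e_t$), we obtain $b(W_t) - b(W_{t-1}) \geq \delta_1 \geq 0$, and more crudely $b(W_t) - b(W_{t-1}) \geq -1$ always suffices together with one step gaining $\min\{\delta_1,1\}$. Summing the base case with the $v_{K_j}-1$ inductive steps gives $b(U_j') \geq (X - 2\alpha) + 0\cdot(v_{K_j}-1)$, which is already stronger than the claimed bound when $v_{K_j} = 1$; and when $v_{K_j}\geq 2$ at least one inductive step occurs and contributes $\geq \min\{\delta_1,1\}$, giving the indicator term. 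Finally $b(U_j)\geq b(U_j')$ since adding back the non-tree edges of $K_j$ only adds edges, not vertices, so the balance does not decrease.

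The main obstacle I anticipate is bookkeeping the vertex and edge counts correctly across the peeling, in particular making sure the $-(v_{K_j}-2)$ term in the claim is matched: my naive inductive bound above gives $b(U_j')\geq X-2\alpha$ with no $v_{K_j}$ loss at all, which is stronger than claimed, so I should double-check whether the shared overlap is really guaranteed to be an entire edge (two vertices) or possibly just a single vertex — if the auxiliary adjacency in $\I$ only guarantees a shared \emph{vertex} rather than a shared \emph{edge}, then each inductive step might only guarantee $v_F\geq 1$, in which case \cref{lem:>alpha} does not directly apply and I would instead bound $e_{H_{e_t}} - 1 \geq \alpha(v_{H_{e_t}}-2)+X$ crudely and absorb a loss of up to $\alpha - 1$ per overlap vertex, which is where the $-(v_{K_j}-2)$ slack would come from. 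Re-reading the definition of $\I$ in the excerpt, adjacency does require $\cop{H_e}$ and $\cop{H_f}$ to share \emph{an edge}, so the stronger bound should go through; I would present the argument with $v_F\geq 3$ and note the resulting inequality is stronger than stated, which is harmless. A secondary subtlety is that the endpoints of $e_t$ may or may not already lie in $W_{t-1}$ (they lie in $\cop L$ but $\cop L$'s vertices are not all in $U_j$), so I need to be careful that "adds $v_{H_{e_t}} - v_F$ vertices" counts only genuinely new vertices — this is exactly what defining $F$ as the full overlap $\cop{H_{e_t}}\cap W_{t-1}$ (not just the overlap with $\cop{H_{e_s}}$) handles, at the cost of $F$ possibly being larger, which only helps. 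Once \cref{claim:component} is in hand, combining \eqref{eq:bal-Gi'-Gi}, \eqref{eq:H-steps-after-L}, \cref{claim:component}, and \cref{lem:X-vs-alpha} (which controls $X + (v_{K_j}-2)(\alpha-1)$ against $e_{K_j}(\alpha/m_2(L)-1)$) should close out \cref{lem:increase-balance}, but that final assembly is presumably done after this claim.
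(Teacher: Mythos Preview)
Your argument has a genuine gap in the inductive edge count. When you pass from $W_{t-1}$ to $W_t = W_{t-1} \cup (\cop{H_{e_t}}\setminus e_t)$, the number of new edges is $(e_{H_{e_t}}-1) - e_F$, not $(e_{H_{e_t}}-1) - (e_F-1)$: the overlap $F = (\cop{H_{e_t}}\setminus e_t)\cap W_{t-1}$ has exactly $e_F$ edges, and there is no reason to subtract a further $1$. With the correct count,
\[
  b(W_t)-b(W_{t-1}) = (e_{H_{e_t}}-1-e_F)-\alpha(v_{H_{e_t}}-v_F) \geq \delta_1 - 1,
\]
which can be negative. Consequently your conclusion $b(U_j)\geq X-2\alpha$ (with no $-(v_{K_j}-2)$ loss) is too strong and does not follow. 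Relatedly, your claim that $v_F\geq 3$ because ``$F$ contains the two endpoints of $e_t$'' is unjustified: those endpoints lie in $\cop{L}$ but need not lie in $W_{t-1}=\bigcup_{s<t}(\cop{H_{e_s}}\setminus e_s)$; you even note this later but then dismiss it.

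The paper recovers precisely by tracking, at each step, how many endpoints of $e_{r+1}$ are \emph{missing} from $U_j^r$ (call this $t_{r+1}$). If $t_{r+1}=0$, one can augment $F$ by the edge $e_{r+1}$ to obtain $F^+\subseteq H_{e_{r+1}}$ with $e_{F^+}=e_F+1$ and $v_{F^+}=v_F$, and then \cref{lem:>alpha} applied to $F^+$ gives a nonnegative balance change. If $t_{r+1}>0$, the step loses at most $1$, and summing these losses gives at most $\sum_r t_{r+1}\leq v_{K_j}-2$, since the ``new'' endpoints are vertices of $K_j\subseteq \cop{L}$ not already among the two endpoints of $e_1$. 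This is exactly where the $-(v_{K_j}-2)$ term comes from. (A side remark: $v_{K_j}$ here denotes the number of vertices of $K_j$ viewed as a subgraph of $\cop{L}$, i.e.\ the number of endpoints spanned, not the number of $\I$-vertices in the component; your parenthetical conflates the two.)
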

\begin{proof}
  Since $K_j$ is connected in $\I$, we may order its edges as $e_1,
  \dotsc, e_\ell$ so that, for each $r \in \br{\ell-1}$, the edge
  $e_{r+1}$ is $\I$-adjacent to $\{e_1, \dotsc, e_r\}$.
  Letting $F \subseteq H_{e_{r+1}}$ be the subgraph corresponding to this intersection,
  we define, for each $r \in \{0, \dotsc, \ell\}$,
  \[
    U_j^r \coloneqq \bigcup_{s=1}^r (\cop{H_{e_s}} \setminus e_s),
  \]
  so that $\varnothing = U_j^0 \subseteq \dotsb \subseteq U_j^\ell =
  U_j$.  Observe that
  \[
    b(U_j^1) = e_{U_j^1} - \alpha \cdot v_{U_j^1} = (e_{H_{e_1}} -1) -
    \alpha \cdot v_{H_{e_1}} \geq X - 2\alpha,
  \]
  where the inequality follows from the definition of $X$.

  Suppose now that $r \geq 1$ and let $\cop{F}$ be the intersection of
  $\cop{H_{e_{r+1}}} \setminus e_{r+1}$ with $U_j^r$; note that this intersection is non-empty as $e_{r+1}$ is $\I$-adjacent to $\{e_1,\dots,e_r\}$. We have
  \[
    b(U_j^{r+1}) - b(U_j^r) = (e_{H_{e_{r+1}}}-1-e_F) - \alpha
    \cdot (v_{H_{e_{r+1}}} - v_F).
  \]
  Let $t_{r+1}$ be the number of endpoints of $e_{r+1}$ that are not
  in $U_j^r$.
  Suppose first that $t_{r+1} = 0$, that is, both endpoints of $e_{r+1}$ are already in $U_j^r$.
  In this case, both endpoints of $e_{r+1}$ also belong to $\cop{F}$
  and thus $\cop{F} \cup e_{r+1}$ is isomorphic to a
  subgraph $F^+ \subseteq H_{e_{r+1}}$ with $e_F+1$ edges and $v_F$ vertices, which means that
  \[
    b(U_j^{r+1}) - b(U_j^r) = (e_{H_{e_{r+1}}} - e_{F^+}) - \alpha
    \cdot (v_{H_{e_{r+1}}} - v_{F^+}) \geq 0,
  \]
  by \cref{lem:>alpha}.
  In case $t_{r+1} > 0$, $F$ is a proper subgraph of $H_{e_{r+1}}$ and thus we have
  \[
    b(U_j^{r+1}) - b(U_j^r) \geq \delta_1-1 \geq \delta_1-t_{r+1},
  \]
  see~\eqref{eq:delta1}. We may thus conclude that
  \[
    b(U_j) = b(U_j^1) + \sum_{r=1}^{\ell-1} (b(U_j^{r+1}) - b(U_j^r))
    \geq X - 2\alpha - \sum_{r=1}^{\ell-1} t_{r+1} + \delta_1 \cdot
    \mathbf{1}_{t_2+\dotsb+t_\ell>0}.
  \]
  The desired inequality follows as $t_2 + \dotsb + t_{\ell} = \ab{V(K_j) \setminus V(U_j^1)} \leq
  v_{K_j}-2$ and, further, $v_{K_j} > 2$ implies that the sum
  $t_2+\dotsb+t_r$ is either positive or at most $v_{K_j}-3$.
\end{proof}
We are now ready to show that the balance only increases when we perform an $\L$-step.
\begin{claim}\label{claim:delta3}
  If $G_i \to G_{i+1}$ is an $\L$-step, then
  $b(G_{i+1}) \geq b(G_i)$. Moreover, if this $\L$-step is degenerate,
  then $b(G_{i+1}) \geq b(G_i) + \delta_2$ for some $\delta_2>0$ that
  depends only on $\H$ and $\L$.
\end{claim}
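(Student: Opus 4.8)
The plan is to splice together everything assembled so far: the identity~\eqref{eq:bal-Gi'-Gi}, the bound~\eqref{eq:H-steps-after-L}, \cref{claim:component}, \cref{lem:X-vs-alpha}, and \cref{lem:m2L}. Writing $b(G_{i+1})-b(G_i)=\bigl(b(G_i')-b(G_i)\bigr)+\bigl(b(G_{i+1})-b(G_i')\bigr)$ and substituting \cref{claim:component} into~\eqref{eq:H-steps-after-L}, I would rearrange the $j$th summand into $b(U_j)+\alpha v_{K_j}\geq X+(\alpha-1)(v_{K_j}-2)+\min\{\delta_1,1\}\cdot\mathbf 1_{v_{K_j}>2}$, and then invoke \cref{lem:X-vs-alpha} with the non-empty subgraph $K_j\subseteq L$ to replace $X+(\alpha-1)(v_{K_j}-2)$ by $e_{K_j}\bigl(\tfrac{\alpha}{m_2(L)}-1\bigr)$. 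Summing over $j\in\br m$ and using the bookkeeping fact that the components $K_1,\dots,K_m$ of $\I$ partition the edge set of $\cop L\setminus\cop J$, so that $\sum_j e_{K_j}=e_L-e_J$, one obtains, after combining with~\eqref{eq:bal-Gi'-Gi} and simplifying,
\[
  b(G_{i+1})-b(G_i)\;\geq\;\frac{\alpha}{m_2(L)}\Bigl((e_L-e_J)-m_2(L)(v_L-v_J)\Bigr)+\sum_{j=1}^m\varepsilon_j,
\]
where each $\varepsilon_j\geq 0$ and $\varepsilon_j\geq\min\{\delta_1,1\}$ whenever $v_{K_j}>2$. The first term is nonnegative by \cref{lem:m2L}, which proves $b(G_{i+1})\geq b(G_i)$.

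For the ``moreover'', a degenerate $\L$-step means there is no overlapping $\H$-copy and no pristine $\L$-copy, so in particular the chosen copy $\cop L$ is not pristine; I would extract the gain $\delta_2$ from whichever of the inequalities above is strict. If $\cop J\neq K_2$ (this includes the case that $\cop L$ is not regular), then the inequality of \cref{lem:m2L} is strict, and since there are only finitely many pairs $(L,J)$ to consider, the first term above is bounded below by a positive constant depending only on $\H$ and $\L$. Otherwise $\cop J=K_2$, so $\cop L$ is regular; since it is not pristine, two of the graphs $\cop{H_e}$, $\cop{H_f}$ share a vertex $v\notin V(\cop L)$, and one checks that $v\notin V(G_i')$ as well. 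If $e$ and $f$ lie in a common component $K_j$ of $\I$, that component has at least two edges, so $v_{K_j}\geq 3$ and $\varepsilon_j\geq\min\{\delta_1,1\}$; if they lie in different components, then $v$ is covered by two of the $U_j$'s, so $v_{G_{i+1}}$ is strictly smaller than the naive count and the inequality~\eqref{eq:H-steps-after-L} in fact holds with an additive slack of $\alpha$. In every case the increase is at least some $\delta_2=\delta_2(\H,\L)>0$. Together with \cref{claim:delta1}, this would finish the proof of \cref{lem:increase-balance}, with $\delta=\min\{\delta_1,\delta_2\}$.

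I expect the first paragraph to be essentially routine once \cref{claim:component,lem:X-vs-alpha,lem:m2L} are in hand — it is algebra plus the observation about partitioning $E(\cop L\setminus\cop J)$ among the components of $\I$. The main obstacle is the second paragraph: one must argue carefully that ``$\cop L$ is not pristine'' always translates into a quantifiable gain, which requires disentangling the edge-level information recorded by $\I$ from the vertex-level overlaps among the $\cop{H_e}$ that make~\eqref{eq:H-steps-after-L} strict, and then using finiteness of $\H\cup\L$ (and of the subgraphs of its members) to make all these gains uniform.
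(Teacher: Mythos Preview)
Your argument is essentially the paper's: combine \eqref{eq:bal-Gi'-Gi}, \eqref{eq:H-steps-after-L}, \cref{claim:component}, \cref{lem:X-vs-alpha}, and \cref{lem:m2L} to obtain the displayed lower bound, and then in the degenerate case extract a uniform positive constant from whichever inequality is strict. Your case split for the degenerate step ($\cop J\neq K_2$ versus $\cop J=K_2$ with a repeated vertex among the $\cop{H_e}$) is actually more explicit than the paper's (which only splits on whether $\I$ has an edge and leaves the $\cop J\neq K_2$ case implicit), but the content is the same.

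One small slip to correct: the parenthetical ``one checks that $v\notin V(G_i')$'' is not guaranteed, since nothing prevents $\cop{H_e}$ from sharing a \emph{vertex} (though not an edge) with $G_i$. This is harmless for your conclusion: if $v\in V(G_i)\subseteq V(G_i')$, then $v\in \bigl(V(U_j)\cap V(G_i')\bigr)\setminus V(K_j)$ (because $V(K_j)\subseteq V(\cop L)$ and $v\notin V(\cop L)$), which equally forces \eqref{eq:H-steps-after-L} to be strict with slack at least $\alpha$.
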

\begin{proof}
  By~\eqref{eq:bal-Gi'-Gi}, \eqref{eq:H-steps-after-L}, and \cref{claim:delta3}, we have
  \begin{align*}
    b&(G_{i+1}) - b(G_i) = b(G_i')-b(G_i) + b(G_{i+1})-b(G_i') \\
               &\geq (e_L-e_J) - \alpha \cdot (v_L-v_J) + \sum_{j=1}^m
                 (b(U_j) + \alpha \cdot v_{K_j}) \\
               &\geq
                 (e_L-e_J) - \alpha \cdot (v_L-v_J) + \sum_{j=1}^m \left( X + (v_{K_j}-2)(\alpha-1) \right) + \min\{\delta_1,1\} \cdot \mathbf{1}_{\I \neq \varnothing},
  \end{align*}
  since $\I$ is nonempty only if one of its components has more than two vertices.
	We now apply \cref{lem:X-vs-alpha} to each component $K_j$ to conclude that
	\begin{equation*}
          \sum_{j=1}^m
          \left(X + (v_{K_j}-2)(\alpha-1)\right) \geq \sum_{j=1}^m e_{K_j} \cdot  \left( \frac{\alpha}{m_2(L)}-1 \right) = (e_L-e_J ) \left( \frac{\alpha}{m_2(L)}-1 \right).
	\end{equation*}
	Therefore,
	\[
		b(G_{i+1}) -b(G_i) \geq  (e_L-e_J) \cdot \frac{\alpha}{m_2(L)}-\alpha \cdot (v_L-v_J) + \min\{\delta_1, 1\} \cdot \mathbf{1}_{\I \neq \varnothing} \geq \min\{\delta_1, 1\} \cdot \mathbf{1}_{\I \neq \varnothing},
	\]
	where the last inequality follows from \cref{lem:m2L}.
	This implies the desired result if the $\L$-step is pristine.
        If the $\L$-step is not pristine but $\I$ has no edges, it means that some vertex was repeated between different $\cop{H_e}$.
	In that case, the first inequality
        in~\eqref{eq:H-steps-after-L} is strict (we assumed there that
        the graphs $U_j$ share no vertices outside of $V(K_j)$).
        All in all, we obtain the desired boost in the degenerate case.
\end{proof}

Combining \cref{claim:delta1,claim:delta3}, we obtain \cref{lem:increase-balance}. This completes the proof of the probabilistic lemma.

\section{Proof of the deterministic lemma}\label{sec:det proof}

Given the probabilistic lemma and the work of the first two authors on the symmetric case~\cite{2305.19964}, in order to prove \cref{conj:families}, which generalizes the Kohayakawa--Kreuter conjecture, we only need to show the following. For every strictly balanced pair $(\H,\L)$ of finite families of graphs with $m_2(\H) > m_2(\L) > 1$, we can two-color the edges of every graph $G$ satisfying $m(G) \leq m_2(\H,\L)$ so that there are neither red monochromatic copies of any $H \in \H$ nor blue monochromatic copies of any $L \in \L$.
As discussed in the introduction, we do not know how to do this in all cases. However, the following proposition lists a number of extra assumptions under which we are able to find such a coloring. We recall the notion of the \emph{$1$-density} (or \emph{fractional arboricity}) of a graph $L$, defined by
\[
  m_1(L) \coloneqq \max \left\{ \frac{e_J}{v_J-1} : J \subseteq L, v_J \geq 2 \right\}.
\]
We also make the following definition.
\begin{definition} Given positive integers $s \leq t$, we say that a graph is an \emph{$(s,t)$-graph} if its minimum degree is at least $s$, and
	every edge contains a vertex of degree at least $t$.
We say that a graph is \emph{$(s,t)$-avoiding} if none of its subgraphs is an $(s,t)$-graph.
\end{definition}
\begin{proposition}\label{prop:deterministic}
  Let $(\H,\L)$ be a strictly balanced pair of finite families of graphs satisfying $m_2(\H)>m_2(\L)$ and suppose that at least one of the following conditions holds:
	\begin{enumerate}[label=(\alph*)]
	\item\label{item:bipartite} $\chi(L) \geq 3$ for all $L \in \L$;
	\item\label{item:high chromatic} $\chi(H) > m_2(\H,\L) + 1$  for every $H \in \H$;
	\item\label{item:forests} $m_1(L) > 2$ for all $L \in \L$;
	\item \label{item:biclique} 
          every $H \in \H$ contains an $(s,t)$-graph as a subgraph, for some integers $s \leq t$ satisfying
	\[
		\frac{1}{s+1} + \frac{1}{t+1} < \frac{1}{m_2(\H,\L)};
	\]
	\item\label{item:strong partitioning} $\cei{m_2(\H,\L)}<m_2(\H)$;
	\end{enumerate}
	Then any graph $G$ with $m(G) \leq m_2(\H,\L)$ is not Ramsey for $(\H,\L)$.
\end{proposition}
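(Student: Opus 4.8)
The plan is, in every case, to produce a two-coloring of $E(G)$ — say red and blue — with no red member of $\H$ and no blue member of $\L$; the proposition is really a union of separate coloring arguments sharing a common skeleton. The blue graph $B$ will always be forced into a ``sparse'' family that provably omits every $L \in \L$: either $B$ is a forest (the default, valid since $m_2(L) > 1$ means no $L$ is a forest), or $B$ is a union of two forests (valid in case \ref{item:forests}, since $m_1(L) > 2$ means $L$ is not the union of two forests), or $B$ is bipartite (valid in case \ref{item:bipartite}, since $\chi(L) \geq 3$). Dually, the red graph $R := G \setminus B$ will be forced to omit $\H$, via one of three mechanisms: $m_2(R) < m_2(\H)$ — which suffices because $H \subseteq R$ would give $m_2(R) \geq m_2(H) \geq m_2(\H)$, and recall from \cref{lem:sandwiching m2} that $m_2(\H) > \alpha := m_2(\H,\L)$; or $\chi(R) < \chi(H)$ for all $H$ — used in case \ref{item:high chromatic}, where $\chi(H) > \alpha+1$; or $R$ is $(s,t)$-avoiding — used in case \ref{item:biclique}, where every $H$ contains an $(s,t)$-graph.

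The structural inputs all flow from $m(G) \leq \alpha$. By Hakimi's orientation theorem, $G$ has an orientation with all out-degrees at most $\cei{\alpha}$, and hence splits into $\cei{\alpha}$ pseudoforests, each of which is a forest plus a matching; in particular $G$ is $\flo{2\alpha}$-degenerate, and deleting one well-chosen forest drops the degeneracy by one. For cases \ref{item:forests} and \ref{item:strong partitioning} we instead lean on Nash--Williams / matroid union: removing a maximum union-of-$j$-forests subgraph leaves a remainder whose fractional arboricity has fallen by $j$, which also controls its $m_2$ (up to a lower-order correction). Case \ref{item:strong partitioning} in particular uses the integer case of \cref{conj:partitioning} from \cref{sec:matroid-appendix}: attach to $G$ a disjoint clique on $2\cei{\alpha}+1$ vertices to get $G^+$ with $m(G^+) = \cei{\alpha} \in \Z$, extract a forest $F^+ \subseteq G^+$ with $m_2(G^+ \setminus F^+) \leq \cei{\alpha}$, and restrict to $G$ to obtain a forest $F$ with $m_2(G \setminus F) \leq \cei{\alpha} < m_2(\H)$ by hypothesis; coloring $F$ blue finishes this case.

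The remaining cases then go as follows. In case \ref{item:forests}, $m_1(L) > 2$ forces $m_2(\L) > 2$ and hence $\alpha > 2$, and peeling off a maximum union-of-two-forests subgraph $B$ leaves $R$ with $m_2(R)$ bounded well below $\alpha < m_2(\H)$. In case \ref{item:bipartite} we take $B$ to be a suitable spanning bipartite subgraph (coming from a locally optimal cut, refined so that each side induces a graph omitting $\H$), so that $R$ is the disjoint union of the two induced sides and therefore contains no $H$. In case \ref{item:high chromatic} we delete a forest $F$ chosen, via the degeneracy/pseudoforest structure, so that $R = G \setminus F$ is $\flo{\alpha}$-degenerate, whence $\chi(R) \leq \flo{\alpha}+1 \leq \alpha+1 < \chi(H)$. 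In case \ref{item:biclique} the inequality $\frac{1}{s+1}+\frac{1}{t+1} < \frac{1}{\alpha}$ is exactly what is needed for an appropriately chosen orientation of $G$ — one that allocates the out-degree budgets $s$ and $t$ across the two endpoints of each edge — to yield a forest $F$ with $G \setminus F$ containing no $(s,t)$-graph, hence no $H$.

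The main obstacle is the same in every case: we must split off from $G$ a graph lying in one of the permitted sparse families so that the remainder omits $\H$, and for a general $G$ with $m(G) \leq \alpha$ — where $m_2(\H)$ may exceed $\alpha$ only infinitesimally — this is precisely a restricted instance of \cref{conj:partitioning}, which we cannot prove in general. The hypotheses \ref{item:bipartite}--\ref{item:strong partitioning} are the escape hatches: each grants either extra slack on the red side (\ref{item:high chromatic}, \ref{item:biclique}, \ref{item:strong partitioning}) or a wider class of admissible blue graphs (\ref{item:bipartite}, \ref{item:forests}). I expect the most delicate points to be the orientation bookkeeping in case \ref{item:biclique} and, for case \ref{item:strong partitioning}, the appeal to the matroid-theoretic proof of the integer case of \cref{conj:partitioning}, which is itself the least routine ingredient.
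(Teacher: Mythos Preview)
Your overall skeleton --- split $E(G)$ into a blue part lying in a sparse $\L$-avoiding family and a red part that is $\H$-free --- matches the paper exactly, and your treatments of \ref{item:bipartite} and \ref{item:strong partitioning} are essentially correct (though the disjoint-clique trick in \ref{item:strong partitioning} is unnecessary: \cref{thm:integer partition} already applies whenever $m(G)\le k$, not only when $m(G)=k$).

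However, your plan for case \ref{item:high chromatic} contains a genuine gap.  You assert that one can delete a single forest $F$ so that $R=G\setminus F$ is $\lfloor\alpha\rfloor$-degenerate.  This is false: take $\alpha=5/2$ and $G=K_6$, so $m(G)=5/2\le\alpha$.  Any spanning forest has $5$ edges, so $R$ has $10$ edges on $6$ vertices, whereas a $2$-degenerate graph on $6$ vertices has at most $2\cdot 6-3=9$ edges.  Removing one forest drops the degeneracy by at most one, not by roughly $\alpha$.  The paper avoids this by working with a \emph{vertex} partition rather than an edge deletion: since $G$ is $(2k-1)$-degenerate with $k=\lfloor\alpha\rfloor+1$, \cref{lemma:degenerate-partition} splits $V(G)$ into $k$ parts, each inducing a forest; the union of these induced forests is the blue graph, and the remaining cross-edges form a $k$-partite --- hence $k$-colorable --- red graph.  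The conclusion $\chi(R)\le k$ is strictly weaker than $(k-1)$-degeneracy and is exactly what the hypothesis $\chi(H)>\alpha+1\ge k$ consumes.

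Your routes through \ref{item:forests} and \ref{item:biclique} also diverge from the paper.  For \ref{item:forests} the paper never passes through $m_2(R)$: it uses \cref{lem:density ineqs} to get $m_1(G)\le m_1(\H)+1$, decomposes $G$ into $t+1$ forests with $t=\lceil m_1(\H)\rceil$ via \cref{thm:nash-williams}, colors two of them blue, and observes that the remaining $t-1$ forests have $m_1<m_1(\H)$ and are thus $\H$-free.  For \ref{item:biclique} the paper uses no orientation at all; instead it argues that a minimal counterexample $G$ (one that cannot be split into a forest and an $(s,t)$-avoiding graph) must itself be an $(s+1,t+1)$-graph, and then \cref{lem:st-graph} together with the hypothesis $\tfrac{1}{s+1}+\tfrac{1}{t+1}<\tfrac{1}{\alpha}$ gives $m(G)>\alpha$, a contradiction.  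Your sketch of ``allocating out-degree budgets $s$ and $t$ across the two endpoints'' is too vague to verify; it is not clear what orientation you mean or why the non-forest edges would avoid every $(s,t)$-subgraph.
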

Cases \ref{item:bipartite}--\ref{item:forests} all follow fairly easily from known coloring techniques;  we supply the details in the remainder of this section.
Case \ref{item:biclique} is proved by a short inductive argument, see below.
Case \ref{item:strong partitioning} follows from our partial progress on \cref{conj:partitioning}, namely, that we are able to prove it when $m(G)$ is an integer; we present the proof of this result in \cref{sec:matroid-appendix}.  We end this section with short derivations of \cref{thm:main,thm:m2-2.5} from the proposition.

\begin{proof}[Proof of~\cref{thm:m2-2.5}]
  Assume that $m_2(L)> \frac{11}5$.  By passing to a subgraph with the same $2$-density, we may assume that $L$ is strictly $2$-balanced.  Thanks to cases \ref{item:bipartite} and \ref{item:forests} of \cref{prop:deterministic}, we are done unless $m_1(L) \leq 2$ and $L$ is bipartite.  The bounds on $m_1(L)$ and $m_2(L)$ imply that $2v_L-2 \geq e_L > \frac{11}5(v_L-2)+1$, which yields $v_L < 7$.  However, as $L$ is bipartite on at most six vertices, we have $m_2(L)\leq m_2(K_{3,3})=2$, a contradiction.  
\end{proof}

\begin{proof}[Proof of \cref{thm:main}]
  Cases \ref{mainitem:bipartite}, \ref{mainitem:m1}, \ref{mainitem:high-chi}, and \ref{mainitem:matroids} follow immediately\footnote{\cref{prop:deterministic}\ref{item:forests} implies \cref{thm:main}\ref{mainitem:m1} thanks to Nash-Williams's theorem (\cref{thm:nash-williams} below).} from \cref{prop:deterministic}. For \cref{thm:main}\ref{mainitem:degen}, note that a graph with minimum degree $d$ is a $(d,d)$-graph. Thus, if $H_1$ has degeneracy at least $d$, then it contains some $(d,d)$-graph as a subgraph. Similarly, \cref{thm:main}\ref{mainitem:biclique} follows, since if $s \leq t$, then $K_{s,t}$ is an $(s,t)$-graph satisfying $1/m_2(K_{s,t})=(s+t-2)/(st-1) \geq 1/(s+1)+1/(t+1)$.
\end{proof}

\subsection{Auxiliary results}
\label{sec:auxiliary-results}

We start with a helpful observation relating $m(G)$ and the degeneracy of $G$. We say that a graph is \emph{$d$-degenerate} if its degeneracy is at most $d$.

\begin{lemma}
  \label{lemma:degeneracy}
  Every graph $G$ is $\flo{2m(G)}$-degenerate.
\end{lemma}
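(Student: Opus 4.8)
The plan is to unwind the two definitions and invoke nothing more than the handshake lemma. Recall that the degeneracy of $G$ is realized by some subgraph: there is a $J \subseteq G$ with $v_J \geq 1$ whose minimum degree $\delta(J)$ equals the degeneracy of $G$. (Equivalently, it suffices to bound $\delta(J)$ for every subgraph $J$, since the degeneracy is the maximum of these quantities.) So fix such a $J$, and note we may assume $e_J \geq 1$, as otherwise the degeneracy is $0$ and there is nothing to prove.

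Next I would apply the handshake lemma inside $J$: $2 e_J = \sum_{v \in V(J)} \deg_J(v) \geq \delta(J) \cdot v_J$, so that $e_J / v_J \geq \delta(J)/2$. On the other hand, by the definition of $m(G)$ as a maximum over subgraphs, $m(G) \geq e_J/v_J$. Chaining these gives $\delta(J) \leq 2 m(G)$. Since $\delta(J)$ is an integer, we may round down: $\delta(J) \leq \flo{2m(G)}$. As $\delta(J)$ was the degeneracy of $G$, this is exactly the claim.

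There is essentially no obstacle here — the statement is a direct consequence of the handshake lemma and the definitions, and the only minor point to get right is the (standard) fact that the degeneracy is attained by an actual subgraph, together with the integrality step that lets one replace $2m(G)$ by $\flo{2m(G)}$.
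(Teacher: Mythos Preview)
Your proof is correct and essentially identical to the paper's: both bound the minimum degree of an arbitrary subgraph by twice its edge-to-vertex ratio via the handshake lemma, invoke the definition of $m(G)$, and then use integrality to pass to the floor. The only cosmetic difference is that the paper states the bound for every $G' \subseteq G$ rather than singling out a subgraph realizing the degeneracy.
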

\begin{proof}
  For every $G' \subseteq G$, we have \[\delta(G') \leq \left\lfloor\frac{2e_{G'}}{v_{G'}}\right\rfloor \leq \flo{2m(G)},\]
where $\delta(G')$ is the minimum degree of $G'$. 
\end{proof}

Our second lemma allows us to compare between the various densities.

\begin{lemma}\label{lem:density ineqs}
  For every graph $H$, we have $m_2(H) \leq m_1(H) + \frac{1}{2} \leq m(H) + 1$.
\end{lemma}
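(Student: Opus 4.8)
The plan is to derive both inequalities from the trivial bound $e_J \leq \binom{v_J}{2}$, valid for every graph $J$, by evaluating each maximum at a suitably chosen subgraph.

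For $m_2(H) \leq m_1(H) + \tfrac12$, I would first dispose of the degenerate cases: if $H$ has no edge then $m_2(H) = 0$ and there is nothing to prove, and if the maximum in the definition of $m_2(H)$ is attained only via the convention $m_2(K_2) = \tfrac12$, then $m_2(H) = \tfrac12 \leq 1 + \tfrac12 \leq m_1(H) + \tfrac12$ since $m_1(H) \geq m_1(K_2) = 1$. Otherwise there is a subgraph $J \subseteq H$ with $v_J \geq 3$ and $m_2(H) = \tfrac{e_J - 1}{v_J - 2}$; as $J$ is also admissible in the definition of $m_1$, we have $m_1(H) \geq \tfrac{e_J}{v_J - 1}$, so it suffices to check
\[
	\frac{e_J - 1}{v_J - 2} - \frac{e_J}{v_J - 1} = \frac{e_J - v_J + 1}{(v_J - 1)(v_J - 2)} \leq \frac12 .
\]
Clearing denominators, this is exactly $2 e_J \leq v_J(v_J - 1)$, i.e.\ $e_J \leq \binom{v_J}{2}$, which always holds.

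The inequality $m_1(H) + \tfrac12 \leq m(H) + 1$ is handled identically: assuming $H$ has an edge (the other case being trivial), pick $J \subseteq H$ with $v_J \geq 2$ attaining the maximum in $m_1(H)$, use $m(H) \geq \tfrac{e_J}{v_J}$, and note that
\[
	\frac{e_J}{v_J - 1} - \frac{e_J}{v_J} = \frac{e_J}{v_J(v_J - 1)} \leq \frac12
\]
is again equivalent to $e_J \leq \binom{v_J}{2}$. I do not expect any real obstacle here; the only points requiring care are the boundary conventions in the definition of $m_2$, which the case analysis above absorbs, and the (immediate) observation that the subgraph extremal for one density is always eligible in the definition of the next.
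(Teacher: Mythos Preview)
Your proof is correct and follows essentially the same approach as the paper: both reduce each inequality to the bound $e_J \leq \binom{v_J}{2}$ applied to the subgraph attaining the relevant maximum. You are somewhat more careful with the degenerate boundary cases than the paper, but the core argument is identical.
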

\begin{proof}
Notice that both $\frac{e-1}{v-2} \leq \frac{e}{v-1} + \frac{1}{2}$ and $\frac{e}{v-1} \leq \frac{e}{v} + \frac{1}{2}$ are equivalent to $e \leq \binom v 2$, so both inequalities hold whenever $v,e$ are the numbers of vertices and edges, respectively, of any graph.  In particular, if $v,e$ correspond to the subgraph of $H$ that achieves $m_2(H)$, we find that $m_2(H)=\frac{e-1}{v-2} \leq \frac{e}{v-1} + \frac{1}{2} \leq m_1(H) + \frac{1}{2}$. The second inequality follows in the same way, now passing to the subgraph that achieves $m_1(H)$.
\end{proof}

Our next lemma gives a lower bound on the average degree of an $(s,t)$-graph.  We remark that this inequality is tight for $K_{s,t}$ and that it can be restated as $e_H/v_H \geq m(K_{s,t})$.

\begin{lemma}
  \label{lem:st-graph}
  If $H$ is an $(s,t)$-graph, then
  \[\frac{1}{s} + \frac{1}{t} \geq \frac{v_H}{e_H}.\]
\end{lemma}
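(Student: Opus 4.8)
The plan is to run a short double-counting argument, partitioning the vertices of $H$ according to whether or not their degree reaches the threshold $t$. Set $A \coloneqq \{v \in V(H) : \deg_H(v) \geq t\}$ and $B \coloneqq V(H) \setminus A$, so that every vertex of $B$ has degree between $s$ and $t-1$. The defining property of an $(s,t)$-graph says that every edge contains a vertex of $A$; in other words, $A$ is a vertex cover of $H$, and hence $B$ is an independent set. Write $e(A)$ for the number of edges with both endpoints in $A$ and $e(A,B)$ for the number of edges with one endpoint in each part; since $B$ is independent, $e_H = e(A) + e(A,B)$.

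Next I would bound $\ab{A}$ and $\ab{B}$ separately by summing degrees over each part. Because $B$ is independent, every edge incident to $B$ goes to $A$, so $\sum_{v \in B} \deg_H(v) = e(A,B)$; combined with the minimum-degree bound $\sum_{v \in B} \deg_H(v) \geq s\ab{B}$, this gives $\ab{B} \leq e(A,B)/s$. On the other side, $\sum_{v \in A} \deg_H(v) = 2e(A) + e(A,B)$, while each vertex of $A$ has degree at least $t$, so $\ab{A} \leq (2e(A) + e(A,B))/t$. Adding the two estimates yields
\[
  v_H = \ab{A} + \ab{B} \leq \frac{e(A,B)}{s} + \frac{2e(A) + e(A,B)}{t}.
\]

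Finally I would compare this with the target quantity $\frac{e_H}{s} + \frac{e_H}{t} = \frac{e(A) + e(A,B)}{s} + \frac{e(A) + e(A,B)}{t}$. Subtracting the displayed bound from the target, the difference equals $e(A)\bigl(\tfrac1s - \tfrac1t\bigr)$, which is nonnegative precisely because $s \leq t$; dividing through by $e_H$ then gives $\tfrac1s + \tfrac1t \geq v_H/e_H$, as claimed. I do not anticipate any genuine obstacle here: the only mild point is to notice that the high-degree set $A$ is forced to be a vertex cover (so $B$ is independent), after which the rest is a routine handshake count. The leftover slack $e(A)\bigl(\tfrac1s - \tfrac1t\bigr)$ also makes transparent when equality holds — namely when $A$ spans no edges and both degree bounds are tight, which is exactly the situation for $H = K_{s,t}$.
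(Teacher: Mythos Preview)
Your argument is correct, but it takes a different route from the paper. The paper's proof is a one-line edge sum: for each edge $uv$, one endpoint has degree at least $t$ and the other at least $s$, so $1/\deg(u)+1/\deg(v)\le 1/s+1/t$; summing over all edges and using $\sum_{uv\in E(H)}\bigl(1/\deg(u)+1/\deg(v)\bigr)=v_H$ finishes immediately. Your approach instead partitions the vertex set into the high-degree cover $A$ and the independent remainder $B$, then bounds $\ab{A}$ and $\ab{B}$ by handshake counts in each part. Both arguments are elementary; the paper's is shorter, while yours has the advantage of isolating the exact slack $e(A)\bigl(\tfrac1s-\tfrac1t\bigr)$, making the equality case (no edges inside $A$, all degrees tight, i.e.\ $H=K_{s,t}$) completely explicit.
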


\begin{proof}
  The assumption that $H$ is an $(s,t)$-graph implies that, for every $uv \in E(H)$, we have $1/\deg(u) + 1/\deg(v) \leq 1/s + 1/t$.  This means that
  \[
    e_H \cdot \left(\frac{1}{s} + \frac{1}{t}\right) \geq \sum_{uv \in H} \left( \frac{1}{\deg(u)} + \frac{1}{\deg(v)}\right) = v_H.\qedhere
  \]
\end{proof}
The next lemma supplies a decomposition of a graph of bounded degeneracy.
\begin{lemma}
  \label{lemma:degenerate-partition}
  If a graph $G$ is $(dk-1)$-degenerate, for some positive integers $d,k$, then there is a partition $V(G) = V_1 \cup \dotsb \cup V_k$ such that the graphs $G[V_1], \dotsc, G[V_k]$ are all $(d-1)$-degenerate.
\end{lemma}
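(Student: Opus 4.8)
The plan is to exploit the standard equivalent formulation of degeneracy: a graph $G$ is $(dk-1)$-degenerate if and only if there is an ordering $v_1,\dots,v_n$ of $V(G)$ such that each $v_i$ has at most $dk-1$ neighbors among $\{v_1,\dots,v_{i-1}\}$; I will call these the \emph{back-neighbors} of $v_i$. Fix such an ordering and build the partition greedily by sweeping along it. Having already placed $v_1,\dots,v_{i-1}$ into parts $V_1,\dots,V_k$, assign $v_i$ to a part $V_j$ that currently contains the fewest back-neighbors of $v_i$.

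The point is that this greedy choice never puts too many back-neighbors in the same part. Since $v_i$ has at most $dk-1$ back-neighbors and they are distributed among $k$ parts, the minimum number in a single part is at most $\flo{(dk-1)/k} = d-1$ (here $(dk-1)/k = d - 1/k$, so the floor is $d-1$). Hence $v_i$ is placed into a part in which it has at most $d-1$ earlier neighbors.

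It remains to check that each $G[V_j]$ is $(d-1)$-degenerate. Restrict the fixed ordering $v_1,\dots,v_n$ to the vertices lying in $V_j$; this gives an ordering of $V(G[V_j])$ in which every vertex has at most $d-1$ neighbors within $V_j$ preceding it, by our choice of part for that vertex. For any subgraph $G'\subseteq G[V_j]$, the last vertex of $G'$ in this ordering has degree at most $d-1$ in $G'$, so $G[V_j]$ is $(d-1)$-degenerate, as required.

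There is essentially no serious obstacle here: the argument is a single greedy pass along a degeneracy order, and no global optimization or exchange step is needed. The one subtlety worth flagging is that the pigeonhole count is tight — $dk-1$ is the largest back-degree for which $\flo{(dk-1)/k}=d-1$, which is exactly why the hypothesis is phrased as $(dk-1)$-degeneracy rather than $dk$-degeneracy.
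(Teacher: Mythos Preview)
Your proof is correct and follows essentially the same approach as the paper: fix a degeneracy ordering, greedily assign each vertex to the part containing the fewest of its back-neighbors, and use pigeonhole to bound that count by $\flo{(dk-1)/k}=d-1$. The paper's argument is terser, omitting the explicit verification that the restricted ordering witnesses $(d-1)$-degeneracy of each $G[V_j]$, but the idea is identical.
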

\begin{proof}
  We may construct the desired partition in the following way.  Initialize $V_1 = \dotsb = V_k = \varnothing$  and let $v_1, \dotsc, v_n$ be an ordering of the vertices of $G$ such that every $v_i$ has at most $dk-1$ neighbors preceding it.  We distribute the vertices one-by-one, each time putting $v_i$ in a set $V_j$ where, at the time, $v_i$ has the smallest number of neighbors.  By the pigeonhole principle, this number is at most $\flo{\frac{dk-1}{k}} = d-1$.
\end{proof}

Finally, we quote Nash-Williams's theorem on partitions of graphs into forests.

\begin{theorem}[Nash-Williams \cite{MR0161333}]
  \label{thm:nash-williams}
  A graph $G$ can be partitioned into $t$ forests if and only if $\lceil m_1(G) \rceil \leq t$.
\end{theorem}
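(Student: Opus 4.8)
The statement is the classical arboricity theorem of Nash--Williams, and the natural route is through the matroid union theorem. First I would dispose of the easy (``only if'') direction: if $E(G)$ is partitioned into $t$ forests $F_1,\dots,F_t$ and $J\subseteq G$ has $v_J\geq 2$, then each $F_i$ restricted to $V(J)$ is a forest and hence has at most $v_J-1$ edges, so $e_J\leq t(v_J-1)$; taking the maximum over all such $J$ gives $m_1(G)\leq t$, and since $t$ is an integer this yields $\lceil m_1(G)\rceil\leq t$.

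For the (``if'') direction I would invoke the matroid union theorem (Nash--Williams, Edmonds). Let $M=M(G)$ be the cycle matroid on the ground set $E(G)$, whose independent sets are exactly the edge sets of forests in $G$ and whose rank function is $r(S)=\lvert V(S)\rvert-c(S)$, where $V(S)$ is the set of endpoints of the edges in $S$ and $c(S)$ is the number of connected components of the graph $\bigl(V(S),S\bigr)$. The union of $t$ copies of $M$ is again a matroid, and its rank equals
\[
  \min_{S\subseteq E(G)}\bigl(\lvert E(G)\setminus S\rvert + t\cdot r(S)\bigr);
\]
moreover, a subset of $E(G)$ is independent in this union precisely when it can be partitioned into $t$ forests. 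Hence $E(G)$ itself can be partitioned into $t$ forests if and only if the union has full rank $e_G$, which rearranges to the requirement that $\lvert S\rvert\leq t\cdot r(S)$ for every $S\subseteq E(G)$.

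It then remains to verify that the hypothesis $\lceil m_1(G)\rceil\leq t$---equivalently, $e_J\leq t(v_J-1)$ for every $J\subseteq G$ with $v_J\geq 2$---is exactly what is needed. Given $S\subseteq E(G)$, split $\bigl(V(S),S\bigr)$ into its connected components; components consisting of a single vertex contribute nothing to either side of $\lvert S\rvert\leq t\cdot r(S)$, and for each remaining component $J$ we have $v_J\geq 2$, $r$ of its edge set equals $v_J-1$, and $e_J\leq t(v_J-1)$ by hypothesis. Summing over the components gives $\lvert S\rvert\leq t\cdot r(S)$, as required.

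The one genuine obstacle is that the matroid union theorem is itself a substantial result; a fully self-contained argument would instead prove the (``if'') direction directly by an exchange argument. Take edge-disjoint subforests $F_1,\dots,F_t$ of $G$ maximizing $\sum_i e_{F_i}$, assume for contradiction that some edge $xy$ is uncovered, and perform a sequence of ``Nash--Williams swaps''---each replacing an edge of some $F_i$ by an edge currently reachable from $\{x,y\}$ along augmenting chains---to show that either the forests can be enlarged, contradicting maximality, or else the vertices reached span a subgraph $J$ with $e_J>t(v_J-1)$, contradicting the hypothesis. Since the present paper already develops matroid machinery in \cref{sec:matroid-appendix}, I would present the matroid union proof as the main argument and mention the exchange argument only as an alternative.
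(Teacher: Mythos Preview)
Your argument is correct and is the standard matroid-union derivation of Nash--Williams's theorem. However, the paper does not actually prove \cref{thm:nash-williams}: it is merely quoted as a known result (with a citation to Nash--Williams). The only further comment the paper makes is in \cref{sec:matroid-appendix}, where it remarks in passing that the matroid partitioning theorem (their \cref{thm:matroid partition}, due to Edmonds) ``easily implies'' Nash--Williams's theorem---which is precisely the route you take. So your proposal is consistent with what the paper hints at, but goes well beyond what the paper itself supplies.
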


\subsection{Proof of Proposition \ref{prop:deterministic}}
We are now ready to prove \cref{prop:deterministic}.  Denote $\alpha \coloneqq m_2(\H,\L)$ and let $G$ be an arbitrary graph satisfying $m(G) \leq \alpha$.  We will argue that (the edge set of) $G$ can be partitioned into an $\H$-free graph and an $\L$-free graph.  We split into cases, depending on which condition is satisfied by the pair $(\H,\L)$.

\subsubsection*{Cases~\ref{item:bipartite} and~\ref{item:high chromatic}.}
Let $k \coloneqq \flo{\alpha}+1$, so that $m(G) \leq \alpha < k$, and note that \cref{lemma:degeneracy} implies that $G$ is $(2k-1)$-degenerate.  Consequently, \cref{lemma:degenerate-partition} yields two partitions of the edges of $G$:  a partition into a $1$-degenerate graph and a $k$-colorable graph; and a partition into a $(k-1)$-degenerate graph and a bipartite graph.
The existence of the first partition proves~\ref{item:high chromatic}, as every $1$-degenerate graph is $\L$-free whereas the assumption on $\H$ implies that $\chi(H) > k$ for every $H \in \H$.  We now argue that the existence of the second partition proves~\ref{item:bipartite}.  To this end, note that the assumption there implies that every bipartite graph is $\L$-free, so it is enough to show that $\delta(H) \geq k$ for every $H \in \H$ and thus every $(k-1)$-degenerate graph is $\H$-free.  To see that this is the case, consider an arbitrary $H \in \H$ and let $v \in V(H)$ be its vertex with smallest degree.  As $H$ is strictly $m_2(\cdot, \L)$-balanced, \cref{lem:>alpha} gives $\delta(H) = e_H - e_{H\setminus v} > \alpha$, unless $v_H = 3$, in which case $H = K_3$ and we still have $\delta(H) \geq 2 = m_2(H) \geq m_2(\H) > \alpha$.  Since $\delta(H)$ is an integer, we actually have $\delta(H) \geq \flo{\alpha}+1 = k$, as needed.

\subsubsection*{Case~\ref{item:forests}.}
It is enough to show that $G$ can be partitioned into an $\H$-free graph and a union of two forests; indeed, if $m_1(L) > 2$ for all $L \in \L$, then no union of two forests can contain a member of $\L$ as a subgraph, by (the easy direction of) \cref{thm:nash-williams}.  Let $m_1(\H) \coloneqq \min\{m_1(H):H \in \H\}$. By \cref{lem:density ineqs} and the assumption $m(G) \leq m_2(\H,\L) < m_2(\H)$, we find that \[m_1(G) \leq m(G) + \frac{1}{2} \leq m_2(\H) + \frac{1}{2} \leq m_1(\H) + 1.\] As a result, if we let $t \coloneqq \cei{m_1(\H)}$, we find that $\cei{m_1(G)} \leq t+1$ and therefore \cref{thm:nash-williams} supplies a partition $G$ into $t+1$ forests $G_1, \dots ,G_{t+1}$.  Taking $G' \coloneqq G_1 \cup \dots \cup G_{t-1}$, we arrive at a partition $G = G' \cup (G_t \cup G_{t+1})$. By (the easy direction of) \cref{thm:nash-williams}, we know that $m_1(G') \leq t-1 < m_1(\H)$, so $G'$ is $\H$-free.  As $G_t$ and $G_{t+1}$ are forests, we get the desired decomposition.

\subsubsection*{Case~\ref{item:biclique}.}
It is enough to show that $G$ can be decomposed into a forest and an $(s,t)$-avoiding graph.
Assume that this is not the case and let $G$ be a smallest counterexample with $m(G) \leq \alpha$.  It is enough to show that $G$ is an $(s+1,t+1)$-graph, as then \cref{lem:st-graph} gives
\[
  \frac{1}{s+1} + \frac{1}{t+1} \geq \frac{v_G}{e_G} \geq \frac{1}{m(G)} \geq \frac{1}{\alpha},
\]
a contradiction.

Suppose first that $G$ has a vertex $v$ of degree at most $s$.  By minimality of $G$, we can decompose the edges of $G \setminus v$ into an $(s,t)$-avoiding graph $K$ and a forest $F$.  Adding an arbitrary edge incident with $v$ to $F$ and the remaining edges to $K$ maintains $F$ being a forest and $K$ being $(s,t)$-avoiding, as any $(s,t)$-subgraph of $K$ would have to use $v$, which has degree at most $s-1$ in $K$.  This contradicts our assumption on indecomposability of $G$.

Second, suppose that $G$ contains an edge $uv$ with $\deg(u), \deg(v) \leq t$.  By minimality of $G$, we can decompose $G' \coloneqq G\setminus uv$ into a forest $F$ and an $(s,t)$-avoiding graph $K$.  Adding $uv$ to $F$ must close a cycle, meaning that both $u$ and $v$ are incident to at least one $F$-edge of $G'$ and thus the $K$-degrees of $u$ and $v$ in $G'$ are at most $t-2$.  This means, however, that we can add $uv$ to $K$ while still keeping the degrees of both its endpoints strictly below $t$.  Again, we find that $K$ contains no $(s,t)$-subgraph, a contradiction.

\subsubsection*{Case~\ref{item:strong partitioning}.}
Let $k \coloneqq \cei{m_2(\H,\L)}$.  Since we assume that $m_2(\H) > k$, it is enough to decompose $G$ into a forest and a graph $K$ with $m_2(K) \leq k$.  The following theorem, which implies \cref{conj:partitioning} in the case that $m(G)$ is an integer, supplies such a decomposition.

\begin{theorem}\label{thm:integer partition}
  Let $k$ be an integer, and let $G$ be a graph with $m(G) \leq k$. Then there exists a forest $F \subseteq G$ such that $m_2(G \setminus F) \leq k$.
\end{theorem}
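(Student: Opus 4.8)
The plan is to recast the desired decomposition as an edge‑partition problem and solve it with the matroid union theorem, using the theory of count (sparsity) matroids. For integers $k\ge 1$ and $0\le\ell\le 2k-1$, the \emph{$(k,\ell)$-sparse} subgraphs of $K_n$---those in which $e_J\le kv_J-\ell$ for every subgraph $J$ with $v_J\ge 2$---form the independent sets of a matroid $\mathcal M_{k,\ell}$ on $E(K_n)$ (a classical fact; see Lorea, Whiteley, or Frank's book). Two instances are relevant. First, taking $\ell=2k-1$: since in a simple graph a subgraph on $v_J=2$ vertices has at most $1=2k-(2k-1)$ edges, a subgraph $K\subseteq K_n$ satisfies $m_2(K)\le k$ \emph{if and only if} $K$ is $(k,2k-1)$-sparse, i.e.\ independent in $\mathcal M_2\coloneqq\mathcal M_{k,2k-1}$. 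Second, $m(G)\le k$ is precisely the statement that $E(G)$ is $(k,0)$-sparse. Thus \cref{thm:integer partition} is equivalent to: $E(G)$ can be partitioned into a set $F$ independent in the graphic matroid $\mathcal M_1$ (a forest) and a set independent in $\mathcal M_2$. By the matroid union theorem this partition exists iff
\[
  |A|\le r_1(A)+r_2(A)\qquad\text{for every }A\subseteq E(G),
\]
with $r_i$ the rank of $\mathcal M_i$. As $r_1,r_2$ are additive over the components of the graph $(V(A),A)$, it suffices to treat connected $A$, where $r_1(A)=v_A-1$; writing $v'\coloneqq v_A$ and using $m(G)\le k$ (so $e_A\le kv'$), the theorem reduces to the claim that $r_2(A)\ge e_A-v'+1$ for every connected $A\subseteq E(G)$.

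To prove this I would apply the standard rank formula for count matroids, $r_2(A)=\min_{\mathcal P}\sum_{P\in\mathcal P}\min\!\big(e_P,\,kv_P-(2k-1)\big)$, the minimum over partitions $\mathcal P$ of $A$ into nonempty parts. Setting $g(P)\coloneqq\max\!\big(0,\,e_P-k(v_P-2)-1\big)$, one has $\sum_{P}\min(e_P,kv_P-(2k-1))=e_A-\sum_P g(P)$, so the claim is equivalent to $\sum_{P\in\mathcal P}g(P)\le v'-1$ for \emph{every} partition $\mathcal P$ of $A$. Let $T\coloneqq\{P\in\mathcal P:g(P)>0\}$. The crucial observation is that every $P\in T$ satisfies $v_P\ge 2k$: a graph on $m\le 2k-1$ vertices has at most $\binom m2\le k(m-2)+1$ edges (this inequality is exactly $m_2(K_m)\le k$, valid precisely for $m\le 2k-1$), whence $g(P)=0$. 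Now put $B\coloneqq\bigcup_{P\in T}P$; the parts of $T$ are edge‑disjoint, so $e_B=\sum_{P\in T}e_P$, while $m(G)\le k$ gives $e_B\le kv_B$. Hence, with $D\coloneqq\sum_{P\in T}v_P-v_B\ge 0$,
\[
  \sum_{P\in T}g(P)=e_B-k\sum_{P\in T}v_P+(2k-1)|T|\le kv_B-k\sum_{P\in T}v_P+(2k-1)|T|=(2k-1)|T|-kD .
\]
On the other hand $v'\ge v_B=\sum_{P\in T}v_P-D\ge 2k|T|-D$, so $v'-1\ge 2k|T|-D-1$; and $(2k-1)|T|-kD\le 2k|T|-D-1$ rearranges to $|T|+(k-1)D\ge 1$, which holds as $T\ne\varnothing$ and $k\ge 1$. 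Chaining the inequalities yields $\sum_{P\in\mathcal P}g(P)=\sum_{P\in T}g(P)\le v'-1$, completing the reduction and hence the theorem.

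I expect the main obstacle to be exactly the two conceptual inputs above---realizing that the ``$m_2\le k$'' condition is governed by the $(k,2k-1)$-count matroid (so that the whole problem becomes a clean matroid‑union statement), and spotting that every over‑dense part must span at least $2k$ vertices---after which the count‑matroid rank formula and elementary arithmetic do the rest. Finally, this is where integrality of $k$ enters in an essential way: the matroid $\mathcal M_{k,2k-1}$ (and the count‑matroid rank formula) exist only when $k$, and hence $2k-1$, is an integer. This is why the argument---and thus our progress on \cref{conj:partitioning}---is limited to the case where $m(G)$ is an integer.
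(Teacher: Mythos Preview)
Your proposal is correct and follows the same overall strategy as the paper: recast the decomposition as a matroid-union problem between the graphic matroid $M_1$ and the matroid $M_2$ whose independent sets are the subgraphs $K$ with $m_2(K)\le k$ (equivalently, the $(k,2k-1)$-count matroid), reduce via Edmonds' rank condition to showing $r_2(A)\ge e_A-v_A+1$ for every connected edge set $A\subseteq E(G)$, and observe that integrality of $k$ is what makes $M_2$ a matroid in the first place. The one substantive difference lies in how this last inequality is established. The paper proves it constructively (\cref{lem:check-sc}): it invokes Hakimi's theorem to decompose $A$ into $k$ pseudoforests, keeps $k-1$ of them together with a single edge of the last, and checks directly that the result is an $M_2$-independent set of size at least $e_A-v_A+1$. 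You instead invoke the partition rank formula for count matroids and finish with a short arithmetic argument, the key step being that any part $P$ with $e_P>k(v_P-2)+1$ must satisfy $v_P\ge 2k$. Your route avoids Hakimi's theorem but trades it for the (equally standard, though perhaps less widely quoted) rank formula for matroids induced by intersecting-submodular set functions; the paper's route is slightly more constructive and self-contained. Neither approach has an obvious advantage for attacking the non-integer case of \cref{conj:partitioning}.
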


The proof of \cref{thm:integer partition} is substantially more involved, as it relies on techniques from matroid theory.  We are hopeful that similar techniques may be used to prove \cref{conj:partitioning} in its entirety. We defer the proof of \cref{thm:integer partition} to \cref{sec:matroid-appendix}.

\appendix

\section{The three-color setting}\label{sec:3 color}
In this section, we explain what about the proof needs to change to handle the case $r \geq 3$, and prove \cref{thm:3 colors}. As many of these results are essentially identical to the results discussed previously, we omit or shorten several of the proofs. We begin by defining a natural three-color analogue of cores.
\begin{definition} \label{def:3-color core}
	Let $\H_1,\H_2,\H_3$ be three finite families of graphs. A tuple $(G,\F_1,\F_2,\F_3)$ is an \emph{$(\H_1,\H_2,\H_3)$-core} if $G$ is a graph and $\F_i \subseteq \F_{\H_i}[G]$ for all $i \in \br 3$ are families satisfying the following properties:
	\begin{itemize}
		\item The hypergraph $\F_{1} \cup \F_{2} \cup \F_{3}$ is connected and spans $E(G)$.
		\item For every $i \in \br 3$, every $\cop{H_i} \in \F_i$, every edge $e \in \cop{H_i}$, and every $j \in \br 3 \setminus \{i\}$, there is some $\cop{H_j} \in \F_j$ with $\cop{H_i} \cap \cop{H_j} = \{e\}$.
	\end{itemize}
	We say that $G$ \emph{supports a core} if there exists a core $(G,\F_1,\F_2,\F_3)$.
\end{definition}
The following simple lemma is a straightforward generalization of \cref{lem:ramsey is core}, so we omit the proof.
\begin{lemma}\label{lem:3-color ramsey is core}
	Let $G$ be a graph that is minimally Ramsey for $(\H_1,\H_2,\H_3)$, in the sense that any proper subgraph $G' \subsetneq G$ is not Ramsey for $(\H_1,\H_2,\H_3)$. Then $G$ supports a core.
\end{lemma}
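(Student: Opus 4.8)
The plan is to mimic the proof of \cref{lem:ramsey is core} almost verbatim, with the two-color ``flip the color of $e$'' step replaced by a step that recolors $e$ with one of the two \emph{other} colors. As there, call a tuple $(G,\F_1,\F_2,\F_3)$ with $\F_i\subseteq\F_{\H_i}[G]$ \emph{Ramsey} if every $3$-coloring of $E(G)$ with colors $\br{3}$ contains, for some $i\in\br{3}$, an element of $\F_i$ that is monochromatic in color $i$; thus $G$ is Ramsey for $(\H_1,\H_2,\H_3)$ precisely when $(G,\F_{\H_1}[G],\F_{\H_2}[G],\F_{\H_3}[G])$ is Ramsey.

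First I would pass to minimal witnessing families: choose $\F_i\subseteq\F_{\H_i}[G]$, $i\in\br{3}$, inclusion-minimal subject to $(G,\F_1,\F_2,\F_3)$ being Ramsey, so that replacing the $\F_i$ by subfamilies with at least one strict inclusion destroys the Ramsey property. I claim $(G,\F_1,\F_2,\F_3)$ is then a core in the sense of \cref{def:3-color core}, and the spanning and connectedness conditions are handled exactly as in the two-color case. If some edge $e$ of $G$ lies in no hyperedge of $\F_1\cup\F_2\cup\F_3$, then any $3$-coloring of $E(G)\setminus\{e\}$ extends — by coloring $e$ arbitrarily — to a coloring of $E(G)$ whose guaranteed monochromatic copy must avoid $e$, so $G\setminus e$ is Ramsey for $(\H_1,\H_2,\H_3)$, contradicting the minimality of $G$. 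If instead $\F_1\cup\F_2\cup\F_3$ is disconnected, then — since each hyperedge, being a single copy of a graph, lies within a single component — colorings witnessing that the sub-tuples induced by the individual components are not Ramsey could be glued into one coloring witnessing that $(G,\F_1,\F_2,\F_3)$ is not Ramsey; hence some component induces a Ramsey sub-tuple, and as there are at least two components this sub-tuple omits at least one hyperedge, contradicting the minimality of $(\F_1,\F_2,\F_3)$.

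The only place the third color intervenes is the remaining core condition, and even there the argument parallels the two-color proof. Fix $i\in\br{3}$, a copy $\cop{H_i}\in\F_i$, an edge $e\in\cop{H_i}$, and $j\in\br{3}\setminus\{i\}$; write $\{k\}=\br{3}\setminus\{i,j\}$. By the minimality of $\F_i$, there is a $3$-coloring $\chi$ of $E(G)$ with no element of $\F_i\setminus\{\cop{H_i}\}$ monochromatic in color $i$, none of $\F_j$ monochromatic in color $j$, and none of $\F_k$ monochromatic in color $k$; since $(G,\F_1,\F_2,\F_3)$ is Ramsey, $\chi$ must make $\cop{H_i}$ monochromatic in color $i$, so every edge of $\cop{H_i}$ has color $i$. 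Now let $\chi'$ be obtained from $\chi$ by recoloring $e$ with color $j$. Recoloring an edge away from color $i$ cannot create a new monochromatic color-$i$ copy, and since the new color $j$ is also distinct from $k$ it cannot create a monochromatic color-$k$ copy either; so the Ramsey property forces some $\cop{H_j}\in\F_j$ to be monochromatic in color $j$ under $\chi'$. Since no such copy existed under $\chi$, this $\cop{H_j}$ must contain $e$, and all of its other edges already had color $j$ under $\chi$. As, under $\chi$, every edge of $\cop{H_i}$ has color $i$ while every edge of $\cop{H_j}\setminus e$ has color $j\neq i$, the two copies can share no edge besides $e$, i.e.\ $\cop{H_i}\cap\cop{H_j}=\{e\}$, as required.

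I do not expect a genuine obstacle: the statement is essentially a transcription of \cref{lem:ramsey is core}. The two points worth a moment's care are (i) checking, in the recoloring step, that switching $e$ to the specific color $j$ cannot inadvertently produce a monochromatic copy in the third color $k$ — which it cannot, as $j\neq k$ — and (ii) ensuring that, in the disconnected case, the component sub-tuple is a strict subfamily, which holds because with at least two components some component misses at least one hyperedge. Everything else carries over unchanged.
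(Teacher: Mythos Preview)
Your proposal is correct and follows exactly the approach the paper intends: the paper omits the proof entirely, stating only that it is ``a straightforward generalization of \cref{lem:ramsey is core}'', and your write-up is precisely that generalization, with the only nontrivial adaptation being the recoloring of $e$ to the specific color $j$ (rather than merely flipping between two colors) and the observation that this cannot create a monochromatic copy in the third color $k$.
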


It would be very convenient if every $(\H_1,\H_2,\H_3)$-core  were also an $(\H_1,\H_2)$-core. At first glance this seems true, since the intersection property in \cref{def:3-color core} easily implies the intersection property in \cref{def:core}. Unfortunately, it may be the case that the hypergraph $\F_1 \cup \F_2 \cup \F_3$ is connected, but that the hypergraph $\F_1 \cup \F_2$ is disconnected. Nonetheless, this is the only obstruction, and the following result is true.
\begin{lemma}\label{lem:3-color core is 2-color core}
	Let $(G,\F_1,\F_2,\F_3)$ be $(\H_1,\H_2,\H_3)$-core for some families of graphs $\H_1,\H_2,\H_3$. Then $(G,\F_1,\F_2 \cup \F_3)$ is an $(\H_1,\H_2 \cup \H_3)$-core.
\end{lemma}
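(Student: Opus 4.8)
The plan is to check directly the three conditions in \cref{def:core}, specialized to the pair $(\H,\L) \coloneqq (\H_1,\, \H_2 \cup \H_3)$ with $\F_\H \coloneqq \F_1$ and $\F_\L \coloneqq \F_2 \cup \F_3$. As a preliminary remark, every element of $\F_2 \cup \F_3$ is a copy (as an edge set) of a member of $\H_2$ or of $\H_3$, hence of a member of $\H_2 \cup \H_3$; thus $\F_1 \subseteq \F_{\H_1}[G]$ and $\F_2 \cup \F_3 \subseteq \F_{\H_2 \cup \H_3}[G]$, so $(G,\F_1,\F_2\cup\F_3)$ has the right shape to be an $(\H_1,\H_2\cup\H_3)$-core.

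For the first bullet of \cref{def:core}, observe that $\F_\H \cup \F_\L = \F_1 \cup \F_2 \cup \F_3$, which is precisely the hypergraph required to be connected and to span $E(G)$ in the first bullet of \cref{def:3-color core}; so this condition holds verbatim. (This is exactly the point flagged in the discussion preceding the statement: the hypothesis is phrased with all three families together because $\F_1 \cup \F_2$ alone may fail to be connected, and indeed we never use connectivity of $\F_1 \cup \F_2$ here.)

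For the second bullet, fix $\cop{H_1} \in \F_1$ and $e \in \cop{H_1}$; applying the intersection property of \cref{def:3-color core} with $i=1$ and $j=2$ yields some $\cop{H_2} \in \F_2 \subseteq \F_2 \cup \F_3$ with $\cop{H_1} \cap \cop{H_2} = \{e\}$, as required. For the third bullet, fix $\cop L \in \F_2 \cup \F_3$ and $e \in \cop L$, and let $k \in \{2,3\}$ be such that $\cop L \in \F_k$; applying the intersection property with $i=k$ and $j=1$ yields some $\cop{H_1} \in \F_1$ with $\cop L \cap \cop{H_1} = \{e\}$, as required.

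There is no substantive obstacle: each clause of \cref{def:core} is obtained by restricting a clause of \cref{def:3-color core}. The only mild care needed is notational — treating $\F_2 \cup \F_3$ as a single hypergraph on $E(G)$, which is all that \cref{def:core} ever refers to, rather than keeping track of which of the two families each copy came from.
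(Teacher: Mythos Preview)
Your proof is correct and follows essentially the same argument as the paper's: verify connectivity and spanning from the equality $\F_1 \cup (\F_2 \cup \F_3) = \F_1 \cup \F_2 \cup \F_3$, and derive the two intersection properties by specializing \cref{def:3-color core} to $j=2$ (for $\cop{H_1} \in \F_1$) and $j=1$ (for $\cop L \in \F_2 \cup \F_3$). Your additional preliminary remark that $\F_2 \cup \F_3 \subseteq \F_{\H_2 \cup \H_3}[G]$ is a nice sanity check that the paper leaves implicit.
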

\begin{proof}
	First note that the hypergraph $\F_1 \cup (\F_2 \cup \F_3)$ is simply the same as the hypergraph $\F_1 \cup \F_2 \cup \F_3$, so it is connected and spans $E(G)$ by assumption. For every $\cop{H_1} \in \F_1$ and every edge $e \in \cop{H_1}$, we may apply \cref{def:3-color core} with $j=2$ to see that there exists some $\cop{H_2} \in \F_2 \subseteq \F_2 \cup \F_3$ such that $\cop{H_1} \cap \cop{H_2} = \{e\}$. Similarly, applying \cref{def:3-color core} with $j=1$, we see that for every $\cop{H_{23}} \in \F_2 \cup \F_3$ and every edge $e \in \cop{H_{23}}$, there is some $\cop{H_1} \in \F_1$ such that $\cop{H_1} \cap \cop{H_{23}} = \{e\}$. Thus, $(G,\F_1,\F_2 \cup \F_3)$ is an $(\H_1,\H_2 \cup \H_3)$-core.
\end{proof}
The key (trivial) observation is that if $m_2(\H_2) = m_2(\H_3)$, then $m_2(\H_2 \cup \H_3)$ is also equal to both these numbers, as $m_2(\H_2 \cup \H_3) = \min \{m_2(\H_2),m_2(\H_3)\}$. Now, suppose we are given families $\H_1,\H_2,\H_3$ with $m_2(\H_1)>m_2(\H_2)=m_2(\H_3)$. By passing to families of subgraphs, we may assume that $\H_2,\H_3$ are strictly 2-balanced and that $\H_1$ is strictly $m_2(\cdot,\H_2)$-balanced. We now define $\H=\H_1$ and $\L=\H_2 \cup \H_3$. By \cref{lem:prob rephrased}, we know that there exists some $c>0$ such that if $p \leq cn^{-1/m_2(\H,\L)}$, then a.a.s.\ $G_{n,p}$ contains no subgraph $G$ which supports an $(\H,\L)$-core and satisfies $m(G) >m_2(\H,\L)$.

On the other hand, if $G_{n,p}$ is Ramsey for $(\H_1,\H_2,\H_3)$, then it must contain some minimally Ramsey subgraph $G$. By \cref{lem:3-color core is 2-color core,lem:3-color ramsey is core}, $G$ supports an $(\H,\L)$-core. Moreover, by the above, we must have $m(G) \leq m_2(\H,\L) = m_2(\H_1,\H_2)$, for otherwise $G \nsubseteq G_{n,p}$ a.a.s. Given this, the following deterministic lemma concludes the proof.
\begin{lemma}
	Let $\H_1,\H_2,\H_3$ satisfy $m(\H_1) \geq m(\H_2) \geq m(\H_3)>1$. If $G$ is Ramsey for $(\H_1,\H_2,\H_3)$, then $m(G) > m_2(\H_1,\H_2)$.
\end{lemma}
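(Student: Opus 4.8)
The plan is to prove the contrapositive: assuming $m(G) \le \alpha$, where $\alpha := m_2(\H_1,\H_2)$, I will exhibit a $3$-colouring of $E(G)$ with no copy of any $H_1 \in \H_1$ in colour $1$, no copy of any $H_2 \in \H_2$ in colour $2$, and no copy of any $H_3 \in \H_3$ in colour $3$. The key observation is that, since $m_2(\H_2) > 1$ and $m_2(\H_3) > 1$, every graph in $\H_2 \cup \H_3$ contains a cycle (as $m_2(H) > 1$ forces a subgraph $J \subseteq H$ with $e_J \geq v_J$), so every forest is simultaneously $\H_2$-free and $\H_3$-free. Hence it suffices to partition $E(G)$ into an $\H_1$-free graph together with two forests.

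To produce such a partition, apply the Nash--Williams theorem (\cref{thm:nash-williams}): $G$ decomposes into $t := \lceil m_1(G) \rceil$ forests $F_1,\dots,F_t$. If $t \le 2$ we are done immediately (colour $F_1$ with colour $2$ and, if it exists, $F_2$ with colour $3$, leaving colour $1$ empty), so assume $t \ge 3$. Put $F_1$ in colour $2$, put $F_2$ in colour $3$, and let $G_1 := F_3 \cup \dots \cup F_t$ receive colour $1$. The classes of colours $2$ and $3$ are forests, hence $\H_2$-free and $\H_3$-free respectively, so it remains only to check that $G_1$ is $\H_1$-free, for which it is enough that $m_2(G_1) < m_2(\H_1) = \min_{H_1 \in \H_1} m_2(H_1)$.

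For the density estimate: $G_1$ is a union of $t - 2$ forests, so \cref{thm:nash-williams} gives $m_1(G_1) \le t-2$, and then \cref{lem:density ineqs} yields $m_2(G_1) \le (t-2) + 1/2$. On the other hand, \cref{lem:density ineqs} also gives $m_1(G) \le m(G) + 1/2 \le \alpha + 1/2$, so $t = \lceil m_1(G)\rceil < \alpha + 3/2$. Combining the two bounds, $m_2(G_1) < \alpha \le m_2(\H_1)$, where the final inequality $m_2(\H_1,\H_2) \le m_2(\H_1)$ is part of \cref{lem:sandwiching m2} when $m_2(\H_2) < m_2(\H_1)$ and $(\H_1,\H_2)$ is strictly balanced, and otherwise follows from a one-line estimate (for every $H_1 \in \H_1$ and every $J \subseteq H_1$ one has $e_J/(v_J - 2 + 1/m_2(\H_2)) \le m_2(H_1)$, since $m_2(\H_2) \le m_2(H_1)$ and $m_2(H_1) \ge (e_J-1)/(v_J-2)$). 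Thus $G_1$ contains no member of $\H_1$, and the desired $3$-colouring is complete.

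The argument is short, and I do not expect a serious obstacle; the only points demanding attention are the routine reduction to a strictly balanced pair $(\H_1,\H_2)$ and the bookkeeping of the degenerate cases $t \le 2$ and $m_2(\H_1) = m_2(\H_2)$ (in which $\alpha = m_2(\H_2) = m_2(\H_1)$, but the bound $m_2(G_1) < \alpha \le m_2(\H_1)$ still closes the argument). It is perhaps worth remarking that this proof uses neither the probabilistic lemma nor the two-colour result of \cite{2305.19964}, and does not even require $m_2(\H_2) = m_2(\H_3)$ --- that equality is used only in the reduction preceding this lemma, to identify $m_2(\H_1, \H_2 \cup \H_3)$ with $m_2(\H_1, \H_2)$.
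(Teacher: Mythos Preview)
Your proof is correct and follows essentially the same route as the paper: decompose $G$ into forests via Nash--Williams, reserve two of them for colours $2$ and $3$, and show the union of the remaining forests is $\H_1$-free by a density estimate. The only cosmetic difference is that the paper proves the slightly stronger bound $m(G) > m_2(\H_1)$ directly (bounding $m_1$ of the colour-$1$ graph against $m_1(\H_1)$ rather than passing through $m_2$ via \cref{lem:density ineqs} twice), and then invokes $m_2(\H_1) \geq m_2(\H_1,\H_2)$ at the end.
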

\begin{proof}
	We will actually prove that $m(G) > m_2(\H_1)$, which implies the desired result since $m_2(\H_1) \geq m_2(\H_1,\H_2)$. Suppose for contradiction that $m(G) \leq m_2(\H_1)$. By \cref{thm:nash-williams} (cf.\ the proof of \cref{prop:deterministic}\ref{item:forests}), we know that $G$ is the union of an $\H_1$-free graph and two forests. As $m_2(\H_2) \geq m_2(\H_3)>1$, every graph in $\H_2 \cup \H_3$ contains a cycle, and hence each of these forests is $\H_2 \cup \H_3$-free. Using one color for the $\H_1$-free graph and one color for each of the two forests, we see that $G$ is not Ramsey for $(\H_1,\H_2,\H_3)$.
\end{proof}

\section{Proof of Conjecture \ref{conj:partitioning} in the integer case}
\label{sec:matroid-appendix}
In this section, we present the proof of \cref{thm:integer partition}, which implies \cref{conj:partitioning} in the case that $m(G)$ is an integer. We will use some well-known results from matroid theory; all definitions and proofs can be found in any standard reference on matroid theory, such as Oxley's book \cite{MR2849819}.

The main result we will need is the following matroid partitioning theorem, originally due to Edmonds \cite{MR0190025}. We remark that this theorem easily implies Nash-Williams's theorem (\cref{thm:nash-williams}), which was used in the proof of \cref{prop:deterministic}\ref{item:forests}.
\begin{theorem}\label{thm:matroid partition}
	Let $M_1,M_2$ be matroids on the same ground set $E$, with rank functions $r_1,r_2$, respectively. Then $E$ can be partitioned as $E=I_1 \cup I_2$, with $I_i$ independent in $M_i$ for $i=1,2$, if and only if
	\[
		r_1(X) + r_2(X) \geq \ab X
	\]
	for every $X \subseteq E$.
\end{theorem}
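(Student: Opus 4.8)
The necessity of the rank condition is immediate: if $E = I_1 \cup I_2$ is such a partition and $X \subseteq E$, then $X = (X \cap I_1) \cup (X \cap I_2)$ is a disjoint union of sets independent in $M_1$ and $M_2$ respectively, so $\ab X = \ab{X \cap I_1} + \ab{X \cap I_2} \leq r_1(X) + r_2(X)$. For the converse, the plan is to argue by a matroid augmenting-path construction. (As a shortcut one could instead quote the matroid union theorem of Nash-Williams and Edmonds, which asserts that $\{J_1 \cup J_2 : J_i \text{ independent in } M_i\}$ is the family of independent sets of a matroid on $E$ with rank function $r(X) = \min_{Y \subseteq X}\bigl(\ab{X \setminus Y} + r_1(Y) + r_2(Y)\bigr)$; then $E$ is a union of one independent set from each $M_i$ precisely when $r(E) = \ab E$, which unwinds to $r_1(Y) + r_2(Y) \geq \ab Y$ for all $Y \subseteq E$, and any such union refines to a partition by discarding $J_1 \cap J_2$, since subsets of independent sets are independent.)

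Assume now that $r_1(X) + r_2(X) \geq \ab X$ for every $X \subseteq E$. Among all pairs $(I_1, I_2)$ with $I_1 \cap I_2 = \varnothing$ and $I_i$ independent in $M_i$, fix one maximizing $\ab{I_1 \cup I_2}$; it suffices to show that $I_1 \cup I_2 = E$. Suppose not, and fix $e_0 \in E \setminus (I_1 \cup I_2)$. Form the exchange digraph $D$ on vertex set $E$, with, for each $i \in \{1,2\}$, an arc $x \to y$ whenever $x \notin I_i$, $y \in I_i$, and $(I_i \setminus y) \cup x$ is independent in $M_i$; and let $X_i \coloneqq \{x \notin I_i : I_i \cup x \text{ is independent in } M_i\}$. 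If $D$ contains a directed path from $e_0$ to some vertex of $X_1 \cup X_2$, take one of minimum length and carry out the corresponding exchanges: this produces a disjoint pair $(I_1', I_2')$ of independent sets with $I_1' \cup I_2' = (I_1 \cup I_2) \cup \{e_0\}$, contradicting maximality. That minimality of the path makes these exchanges simultaneously legal is the standard ``no-shortcuts'' lemma for matroid augmenting paths; checking it in this two-matroid situation is the main technical obstacle.

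It remains to handle the case in which no such path exists. Let $Y$ be the set of vertices reachable from $e_0$ in $D$, so that $Y \cap (X_1 \cup X_2) = \varnothing$. For each $i \in \{1,2\}$ I claim $Y \subseteq \mathrm{cl}_{M_i}(I_i \cap Y)$: if $z \in Y \setminus I_i$, then $z \notin X_i$, so $z$ lies on a circuit $C \subseteq I_i \cup z$, and for every $y \in C \cap I_i$ the set $(I_i \setminus y) \cup z$ is independent, giving an arc $z \to y$ and hence $y \in Y$; thus $C \cap I_i \subseteq I_i \cap Y$ and $z \in \mathrm{cl}_{M_i}(C \cap I_i) \subseteq \mathrm{cl}_{M_i}(I_i \cap Y)$. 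Hence $r_i(Y) = \ab{I_i \cap Y}$ for $i = 1, 2$, and since $I_1 \cap Y$ and $I_2 \cap Y$ are disjoint and miss $e_0 \in Y$,
\[
  r_1(Y) + r_2(Y) = \ab{(I_1 \cup I_2) \cap Y} \leq \ab Y - 1,
\]
contradicting the hypothesis. So the augmenting case always occurs, and maximality forces $I_1 \cup I_2 = E$, giving the desired partition.
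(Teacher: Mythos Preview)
Your proof is correct. The paper itself does not give a proof of this theorem: it simply attributes the result to Edmonds and points out that it follows from the rank formula for matroid unions (Oxley, Theorem~11.3.1), of which Oxley's Theorem~11.3.12 is the special case $M_1 = M_2$. Your parenthetical ``shortcut'' paragraph is exactly this argument, so in that sense you recover the paper's approach verbatim.

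What you add beyond the paper is a self-contained augmenting-path proof in the style of Edmonds' original algorithm. This is a genuinely different (and more elementary) route: rather than quoting the matroid-union rank formula as a black box, you build the exchange digraph directly and extract the violating set $Y$ from the reachability structure when no augmenting path exists. The only step you leave implicit is the ``no-shortcuts'' lemma guaranteeing that the simultaneous exchanges along a shortest path preserve independence in each $M_i$; this is indeed standard (and in the two-matroid setting the arc labels along the path must alternate, which makes the verification particularly clean). Your closure argument for the no-path case is correct as written: the key point that every $y$ on the fundamental circuit of $z$ over $I_i$ receives an arc $z \to y$, and hence lies in $Y$, is exactly what forces $r_i(Y) = \ab{I_i \cap Y}$.
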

A slightly weaker statement appears as \cite[Theorem 11.3.12]{MR2849819}, where the result is only stated when $M_1=M_2$. However, it is clear and well-known that the same proof proves \cref{thm:matroid partition}, using the formula for the rank of a matroid union, as given in \cite[Theorem 11.3.1]{MR2849819}.

In our application, we will set $E=E(G)$ and let $M_1$ be the graphic matroid of $G$, whose independent sets are precisely the acyclic subgraphs of $G$. We may view any subset of $E(G)$ as a subgraph $J$ of $G$; we then use $e_J$ rather than $\ab J$ to denote the size of this subset of $E(G)$. Additionally, we use $v_J$ to denote the number of vertices incident to any edge of $J$, and $\omega_J$ to denote the number of connected components of $J$. It is well-known (e.g.\ \cite[equation 1.3.8]{MR2849819}) that the rank function of $M_1$ is given by $r_1(J) = v_J - \omega_J$
for all $J \subseteq E(G)$. 

The second matroid we use will be one whose independent sets are precisely those subgraphs $K \subseteq G$ with $m_2(K) \leq k$. The fact that this is a matroid is the content of the next lemma.
\begin{lemma}\label{lem:m2 is matroid}
	Let $G$ be a graph and let $k$ be a positive integer. Then the family of subgraphs $K \subseteq G$ with $m_2(K) \leq k$ is the collection of independent sets of a matroid.
\end{lemma}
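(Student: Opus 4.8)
The plan is to recast the condition $m_2(K)\leq k$ as a sparsity condition and then verify the matroid axioms by hand. Unwinding the definition of $m_2$, one checks that $m_2(K)\leq k$ if and only if $e_J\leq k v_J-(2k-1)$ for every subgraph $J\subseteq K$ with at least one edge: for $v_J\geq 3$ this is merely a rearrangement of $(e_J-1)/(v_J-2)\leq k$; for a single edge it reads $1\leq 1$ and so is automatic; and when $K$ has no edges it is vacuous, matching $m_2(K)=0$. Thus the lemma states exactly that the $(k,2k-1)$-sparse subgraphs of $G$ are the independent sets of a matroid---this is the boundary case $\ell=2k-1$ of the $(k,\ell)$-count matroids, and it can also be obtained by applying the Dilworth truncation to the submodular count function $g(J)\coloneqq k v_J-(2k-1)$. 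I will, however, verify the independent-set axioms directly for $\mathcal I\coloneqq\{K\subseteq E(G):e_J\leq k v_J-(2k-1)\text{ for every non-empty }J\subseteq K\}$, since the necessary facts are short. The submodularity of $g$ is nonetheless the engine of the argument; it amounts to the inequality $v_A+v_B\geq v_{A\cup B}+v_{A\cap B}$ for edge sets $A,B$, which holds because $V(A\cup B)=V(A)\cup V(B)$ while $V(A\cap B)\subseteq V(A)\cap V(B)$.

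That $\varnothing\in\mathcal I$ and that $\mathcal I$ is closed under taking subsets are both immediate. The substance is the exchange axiom, and I would prepare for it by recording some structure, valid for any fixed $K\in\mathcal I$. Call a non-empty $J\subseteq K$ \emph{tight} if $e_J=k v_J-(2k-1)$. Using $K\in\mathcal I$, a short inclusion--exclusion argument shows: (i) a tight $J$ contains every edge of $K$ joining two vertices of $V(J)$, so a tight subgraph is determined by its vertex set; (ii) if $J,J'$ are tight with $\lvert V(J)\cap V(J')\rvert\geq 2$, then $J\cup J'$ is tight---here one first notes that if $J$ and $J'$ shared two vertices but no edge, then $J\cup J'$ would violate the sparsity of $K$, so they must share an edge, after which the count closes; and (iii) every single edge $f$ of $K$ is tight, since $g(\{f\})=2k-(2k-1)=1=e_{\{f\}}$---this is precisely where the boundary value $\ell=2k-1$ enters. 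Consequently every edge of $K$ lies in some \emph{maximal} tight set, and, by (ii), two distinct maximal tight sets meet in at most one vertex, so each edge of $K$ lies in a unique maximal tight set.

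For the exchange axiom, let $K_1,K_2\in\mathcal I$ with $e_{K_2}>e_{K_1}$, and suppose for contradiction that no edge of $K_2\setminus K_1$ can be added to $K_1$ while remaining in $\mathcal I$. Fix $f\in K_2\setminus K_1$. The obstruction to adding $f$ is a non-empty $J\subseteq K_1$ with $e_{J\cup\{f\}}>k v_{J\cup\{f\}}-(2k-1)$; comparing with $e_J\leq k v_J-(2k-1)$ and noting that passing from $J$ to $J\cup\{f\}$ raises the edge count by one and the vertex count by $0$, $1$, or $2$, one finds that the vertex count cannot actually increase (otherwise, since $k\geq 1$, no violation occurs), so both endpoints of $f$ lie in $V(J)$ and moreover $J$ is tight. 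Hence the endpoints of every edge of $K_2$---those in $K_1$ by (iii) above, the others by the previous sentence---lie in a common, and hence unique, maximal tight set of $K_1$. Grouping the edges of $K_1$ and of $K_2$ by which maximal tight set $S$ of $K_1$ contains them, and bounding the number of $K_2$-edges inside each $S$ by $k\lvert S\rvert-(2k-1)=e_{K_1[S]}$ (using the sparsity of $K_2$ and the tightness of $S$), we obtain $e_{K_2}=\sum_S e_{K_2[S]}\leq\sum_S e_{K_1[S]}=e_{K_1}$, contradicting $e_{K_2}>e_{K_1}$. This establishes the exchange axiom, and with it the lemma.

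The main difficulty is the structural lemma on tight subgraphs---particularly item (ii) and the ``no new vertex'' reduction at the start of the exchange argument---which is exactly where submodularity of the count function is used and where the value $\ell=2k-1$ is exploited (both to make every single edge tight and to make the arithmetic balance); the rest is bookkeeping. If one prefers to avoid this hands-on route, the alternative is to invoke the standard fact that the Dilworth truncation of an integer-valued submodular function $g$ with $g(\{f\})\geq 1$ for every ground-set element $f$ is the rank function of a matroid whose independent sets are precisely $\{K:e_J\leq g(J)\text{ for every non-empty }J\subseteq K\}$; that route additionally delivers the explicit rank function $r(J)=\min\{\sum_i(k v_{J_i}-(2k-1))\}$, the minimum over partitions of the edge set of $J$ into non-empty parts, which may be useful later.
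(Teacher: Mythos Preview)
Your proof is correct. The reformulation $m_2(K)\le k \Leftrightarrow e_J\le k v_J-(2k-1)$ for every nonempty $J\subseteq K$ is exactly the paper's function $f(J)=k(v_J-2)+1$, and your tight-set analysis of the exchange axiom is sound; in particular, the ``no new vertex'' reduction, the closure of tight sets under unions sharing two vertices, and the final counting over maximal tight sets all check out.

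The paper takes a much shorter route: it observes that $f(J)=k(v_J-2)+1$ is integer-valued, increasing, and submodular (as an affine function of $v_J$), and then simply invokes \cite[Corollary~11.1.2]{MR2849819}, which states that for any such $f$ the sets $K$ with $e_J\le f(J)$ for all nonempty $J\subseteq K$ form a matroid. Your approach is essentially a hands-on proof of that corollary in this special case, organised around the tight-set / count-matroid machinery; it is more self-contained and makes transparent exactly where the boundary value $\ell=2k-1$ is used (so that every single edge is tight), whereas the paper's proof is a two-line citation. Both arguments rest on the same engine---submodularity of $J\mapsto v_J$---so the difference is one of packaging rather than of idea.
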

\begin{proof}
	Define a function $f\colon 2^{E(G)} \to \Z$ by $f(J) = k(v_J-2)+1$, for every $J \subseteq E(G)$. Note that this function is integer-valued since $k \in \Z$. Additionally, it is clear that $f$ is increasing, in the sense that $f(J) \leq f(J')$ whenever $J \subseteq J'$. Finally, we claim that $f$ is submodular. This is easiest to see by recalling that the function $g(J) =v_J$ is submodular (see e.g.\ \cite[Proposition 11.1.6]{MR2849819}); as $f$ is obtained from $g$ by multiplying by a positive constant and adding a constant, we find that $f$ is submodular as well.

	Now, by \cite[Corollary 11.1.2]{MR2849819}, we find that there exists a matroid $M(f)$ on $E(G)$ whose independent sets are precisely those $K \subseteq E(G)$ with the property that $e_{J} \leq f({J})$ for all non-empty $J \subseteq K$. Note that, for a graph $J$ with at least three vertices, the inequality $e_{J} \leq f(J)$ is equivalent to $d_2(J) \leq k$, where $d_2(J)=(e_{J}-1)/(v_{J}-2)$. If $J$ is non-empty and has only two vertices, then it must have one edge and $e_{J} \leq f(J)$ always holds. Thus, we see that $K$ is independent in $M(f)$ if and only if $\max \{(e_{J}-1)/(v_{J}-2) :J \subseteq K, v_{J}\geq 3\} \leq k$. This condition is precisely the condition that $m_2(K) \leq k$.
\end{proof}
In order to apply \cref{thm:matroid partition} to the matroids $M_1,M_2$, we need a way of lower-bounding the rank function of $M_2$. This is achieved by the following lemma.
\begin{lemma}\label{lem:check-sc}
	Let $k$ be a positive integer. If $J$ is a graph with $m(J) \leq k$, then there is a subgraph $J' \subseteq J$ with $m_2(J') \leq k$ and $e_J \leq e_{J'} + v_J- 1.$
\end{lemma}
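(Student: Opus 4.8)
The plan is to rephrase \cref{lem:check-sc} as a statement about the rank function of the matroid produced by \cref{lem:m2 is matroid}, and then bound that rank from below by an elementary counting argument. Concretely, apply \cref{lem:m2 is matroid} with the ambient graph taken to be $J$, producing a matroid $M_2$ on $E(J)$ whose independent sets are exactly the subgraphs $K\subseteq J$ with $m_2(K)\le k$. If $J'\subseteq J$ is a maximum independent set of $M_2$, then $m_2(J')\le k$ and $e_{J'}=\operatorname{rank}_{M_2}(E(J))$, so \cref{lem:check-sc} is equivalent to the inequality $\operatorname{rank}_{M_2}(E(J))\ge e_J-v_J+1$, i.e.\ to the assertion that the nullity $e_J-\operatorname{rank}_{M_2}(E(J))$ of $M_2$ on $E(J)$ is at most $v_J-1$.

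To access this nullity I would invoke the standard rank formula for a matroid induced by a monotone submodular integer-valued function: since $M_2=M(f)$ for $f(P)=k(v_P-2)+1$ (as in the proof of \cref{lem:m2 is matroid}, and $f(\{e\})=1$), its rank function is
\[
	\operatorname{rank}_{M_2}(S)=\min_{\mathcal P}\ \sum_{P\in\mathcal P}\min\bigl(f(P),\,|P|\bigr),
\]
the minimum being over all partitions $\mathcal P$ of $S$ into non-empty parts. Consequently the nullity equals $\max_{\mathcal P}\sum_{P\in\mathcal P}\max\bigl(e_P-f(P),0\bigr)$, and it suffices to show that for \emph{every} partition $\mathcal P$ of $E(J)$, the parts $Q_1,\dots,Q_r$ with $e_{Q_j}>f(Q_j)$ satisfy $\sum_{j=1}^r\bigl(e_{Q_j}-f(Q_j)\bigr)\le v_J-1$.

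The crux is then a short computation that plays the global sparsity $m(J)\le k$ against these ``dense'' parts. First, each such $Q_j$ must span many vertices: from $e_{Q_j}\ge f(Q_j)+1=k(v_{Q_j}-2)+2$ and $e_{Q_j}\le\binom{v_{Q_j}}{2}$ one deduces $v_{Q_j}\ge 2k$ — indeed, a dense part has $v_{Q_j}\ge 3$, and if $3\le v_{Q_j}\le 2k-1$ then $k\ge (v_{Q_j}+1)/2$, whence $k(v_{Q_j}-2)+2\ge\tfrac{(v_{Q_j}+1)(v_{Q_j}-2)}{2}+2=\binom{v_{Q_j}}{2}+1>e_{Q_j}$, a contradiction. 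Now put $U\coloneqq\bigcup_j Q_j\subseteq J$ and $D\coloneqq\sum_j v_{Q_j}-v_U\ge 0$. Since the $Q_j$ are pairwise edge-disjoint, $e_U=\sum_j e_{Q_j}$, so
\[
	\sum_{j=1}^r\bigl(e_{Q_j}-f(Q_j)\bigr)=e_U-k\sum_j v_{Q_j}+(2k-1)r\ \le\ (2k-1)r-kD,
\]
where the inequality uses $e_U\le k\,v_U=k\bigl(\sum_j v_{Q_j}-D\bigr)$, valid because $U\subseteq J$ and $m(J)\le k$. On the other hand $v_J\ge v_U=\sum_j v_{Q_j}-D\ge 2kr-D$, so $v_J-1\ge 2kr-D-1$. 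Comparing the two displays, it remains to check $(2k-1)r-kD\le 2kr-D-1$, i.e.\ $(k-1)D+r\ge 1$, which holds since $r\ge 1$ and $D\ge 0$. (When $r=0$ the claimed bound reads $0\le v_J-1$, true as $e_J\ge 1$.) This establishes the nullity bound, and hence \cref{lem:check-sc}: the maximum independent set $J'$ of $M_2$ has $e_{J'}=\operatorname{rank}_{M_2}(E(J))\ge e_J-v_J+1$ and $m_2(J')\le k$.

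I expect the only real obstacle to be finding the right formulation. Once one passes to the nullity of $M_2$ and uses the partition rank formula, the problem collapses to the single observation that a subgraph which \emph{strictly} exceeds the $m_2$-threshold must span at least $2k$ vertices, after which the inequality $(k-1)D+r\ge 1$ is immediate. (One could instead try to build $J'$ directly by a greedy scan along a $2k$-degeneracy order of $J$, adding back-edges while $m_2\le k$ is preserved; but controlling how many back-edges at each vertex can be added seems to require essentially the same accounting, so routing through the matroid rank formula — entirely in the spirit of this appendix — is cleaner, and needs no reduction to connected $J$ or to $\delta(J)\ge 2$.)
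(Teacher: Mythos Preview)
Your argument is correct, but it follows a genuinely different route from the paper's proof. The paper constructs $J'$ explicitly: it invokes Hakimi's theorem to decompose $J$ into $k$ pseudoforests $J_1,\dots,J_k$ (each with $m(J_i)\le 1$), sets $J'=J_1\cup\dots\cup J_{k-1}\cup\{e\}$ for any edge $e\in J_k$, and then checks $m_2(J')\le k$ by a direct case split on whether a test subgraph $L\subseteq J'$ has $v_L\le 2k-1$ or $v_L\ge 2k$. Your approach instead stays on the matroid side: you recognise that the lemma is exactly a nullity bound for $M_2=M(f)$ and attack that bound through the partition rank formula, reducing everything to the inequality $(k-1)D+r\ge 1$. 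Both proofs hinge on the same combinatorial fact---a part with $e_P>k(v_P-2)+1$ must have $v_P\ge 2k$---but they deploy it differently: the paper uses it to certify $d_2(L)\le k$ for small $L$, whereas you use it to lower-bound $v_U$. Your route is arguably more in keeping with the appendix's matroid flavour and avoids Hakimi's theorem, at the cost of importing the rank formula for $M(f)$ (which is standard but not stated in the paper, so you should cite it explicitly, e.g.\ via the Dilworth truncation or Oxley~\cite[Ch.~11]{MR2849819}); the paper's route is more constructive, producing a concrete $J'$, and self-contained modulo Hakimi. One small point: like the paper, your argument tacitly assumes $e_J\ge 1$ (needed for the $r=0$ case), which is harmless since the lemma is only applied to nonempty components.
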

\begin{proof}
	A well-known theorem of Hakimi \cite{MR0180501}, which is itself a simple consequence of \cref{thm:matroid partition}, implies that 
	since $m(J) \leq k$,  we can partition $J$ into graphs $J_1,\dots,J_k$, with $m(J_i) \leq 1$ for all $i$ (i.e.\ every component of every $J_i$ has at most one cycle). We may assume without loss of generality that $J_k$ is non-empty. Let $e$ be an edge of $J_k$ and define $J' = J_1 \cup \dotsb \cup J_{k-1} \cup \{ e\}$. We claim that $m_2(J') \leq k$ and $e_{J} \leq e_{J'} + v_J -1$.

	The second claim is fairly easy to see, as
	\[
		e_{J'} =1+ \sum_{i=1}^{k-1} e_{J_i} = 1+(e_J - e_{J_k}) \geq 1+e_J - v_{J_k} \geq e_J - v_J +1,
	\]
	where the second equality uses the fact that $J_1,\dots,J_k$ partition $J$, and the two inequalities follow from $e_{J_k} \leq v_{J_k} \leq v_J$, since $m(J_k) \leq 1$ and $J_k \subseteq J$.

	So it remains to prove that $m_2(J') \leq k$, i.e.\ that $d_2(L) \leq k$ for all $L \subseteq J'$. If $v_{L} \leq 2k-1$, then
	\[
		d_2(L) \leq \frac{\binom{v_{L}}2-1}{v_{L}-2}=\frac 12 \cdot\frac{v_L^2-v_L-2}{v_L-2} = \frac 12 (v_L+1) \leq k,
	\]
	as claimed. So we may assume that $v_L \geq 2k$.
	As $m(J_i) \leq 1$ for all $i$, we see that $e_{L} \leq (k-1)v_{L}+1$. Therefore,
	\[
		d_2(L) = \frac{e_{L}-1}{v_{L}-2} \leq \frac{(k-1)v_{L}}{v_{L}-2} \leq \frac{kv_L -2k}{v_L-2} = k.\qedhere
	\]
\end{proof}
With all of these preliminaries, we are ready to prove \cref{thm:integer partition}.
\begin{proof}[Proof of \cref{thm:integer partition}]
	Let $G$ be a graph with $m(G) \leq k$ and let $E=E(G)$. Let $M_1$ be the graphic matroid on the ground set $E$ and let $M_2$ be the matroid given by \cref{lem:m2 is matroid}, whose independent sets are those $K \subseteq G$ with $m_2(K) \leq k$. We wish to prove that $E$ can be partitioned into an independent set from $M_1$ and an independent set from $M_2$; by \cref{thm:matroid partition}, it suffices to prove that $r_1(J) + r_2(J) \geq e_J$ for all $J \subseteq G$.

	So fix some $J \subseteq G$, and let its connected components be $J_1,\dots,J_t$. We then have that $r_1(J) = v_J-\omega_J = v_J-t$. As $m(G) \leq k$, we certainly have that $m(J_i) \leq k$ for all $i$, and hence \cref{lem:check-sc} implies that there exist $J'_i \subseteq J_i$ with $m_2(J'_i) \leq k$ and $e_{J_i} \leq e_{J'_i}+v_{J_i}-1$. Let $J' = J_1' \cup \dotsb \cup J_t'$. If $J'$ is a matching, then $m_2(J') \leq 1 \leq k$. If not, then its maximal $2$-density is attained on some connected component, hence $m_2(J') = \max_i m_2(J_i') \leq k$. Therefore, $J'$ is independent in $M_2$, which implies that
	\[
		r_2(J) \geq r_2(J') = e_{J'} =\sum_{i=1}^t e_{J_i'} \geq \sum_{i=1}^t (e_{J_i} - (v_{J_i}-1)) = e_J - (v_J-t).
	\]
	Recalling that $r_1(J) = v_J-t$, we conclude that $r_1(J) + r_2(J) \geq e_J$, as claimed.
\end{proof}

\end{document}